\setlist[enumerate,1]{label=$\mathrm{(\arabic{enumi})}$}
\newtheorem{thm}{Theorem}[section]
\newtheorem{lem}[thm]{Lemma}
\newtheorem{prop}[thm]{Proposition}
\newtheorem{cor}[thm]{Corollary}
\declaretheorem[name=Theorem, numberlike=thm]{re-thm}
\declaretheorem[name=Proposition, numberlike=thm]{re-prop}
\theoremstyle{definition}
\newtheorem{defn}[thm]{Definition}
\newtheorem{ex}[thm]{Example}
\theoremstyle{remark}
\newtheorem{rem}[thm]{Remark}
\newcommand{\N}{{\mathbb N}}
\newcommand{\R}{{\mathbb R}}
\newcommand{\Z}{{\mathbb Z}}
\newcommand{\cC}{{\mathcal C}}
\newcommand{\cV}{{\mathcal V}}
\DeclareMathOperator{\Cl}{Cl}
\DeclareMathOperator{\Ex}{Ex}   
\DeclareMathOperator{\Fix}{Fix}   
\DeclareMathOperator{\image}{im}
\DeclareMathOperator{\rank}{rank}
\DeclareMathOperator{\card}{card}
\DeclareMathOperator{\dom}{dom}
\DeclareMathOperator{\Int}{Int}
\DeclareMathOperator{\Bd}{Bd}
\DeclareMathOperator{\St}{St}
\DeclareMathOperator{\Low}{Low}
\newcommand{\mdm}{\texttt{mdm}\xspace}
\newcommand{\maxdim}{k} 
\newcommand{\rightconnects}[1]{\rightarrowtriangle_{#1}}
\newcommand{\leftconnects}[1]{\leftarrowtriangle_{#1}}
\newcommand{\GenerateMDM}{\texttt{\upshape GenerateMDM}\xspace}
\newcommand{\ExpandMDM}{\texttt{\upshape ExpandMDM}\xspace}
\newcommand{\ComputeG}{\texttt{\upshape ComputeG}\xspace}
\newcommand{\Matching}{\texttt{\upshape Matching}\xspace}
\newcommand{\ComputeDiscreteGradient}{\texttt{\upshape ComputeDiscreteGradient}\xspace}
\newcommand{\AddCofacets}{\texttt{\upshape add\_cofacets}\xspace}
\newcommand{\LevelSets}{\texttt{\upshape LevelSets}\xspace}
\newcommand{\NumUnprocFacets}{\texttt{\upshape num\_unproc\_facets}\xspace}
\newcommand{\UnprocFacet}{\texttt{\upshape unprocessed\_facet}\xspace}
\newcommand{\processed}{\texttt{\upshape processed}\xspace}
\newcommand{\PQone}{\texttt{\upshape PQone}\xspace}
\newcommand{\PQzero}{\texttt{\upshape PQzero}\xspace}
\newcommand{\True}{\textbf{\upshape True}\xspace}
\newcommand{\False}{\textbf{\upshape False}\xspace}
\newcommand{\Sim}[1]{\ensuremath{\sim_{#1}}}
\newcommand{\Hom}{\ensuremath{H_*}}
\newcommand{\Head}[2]{\ensuremath{\mathcal{H}_{#1}(#2)}}
\newcommand{\Tail}[2]{\ensuremath{\mathcal{T}_{#1}(#2)}}
\newcommand{\Pareto}[1]{\ensuremath{\mathcal{P}_{#1}}}
\newsavebox{\measure@tikzpicture}
	\def\tikz@width{#1}%
	\def\tikzscale{1}\begin{lrbox}{\measure@tikzpicture}%
	\edef\tikzscale{\pgfmathresult}%
\title[Analyzing Multifiltering Functions Using MDM Theory]{Analyzing Multifiltering Functions Using Multiparameter Discrete {Morse} Theory}
\author{Guillaume Brouillette}
\address{D{\'e}partement de math{\'e}matiques\\ Universit{\'e} de Sherbrooke\\ Sherbrooke, QC, Canada}
\email{Guillaume.Brouillette2@USherbrooke.ca}
\subjclass[2020]{Primary 57Z25; Secondary 52-08, 55N31, 57Q70}
\keywords{Discrete Morse theory, multiparameter persistent homology, multifiltering functions, discrete gradient field, Pareto set, critical components, topological data analysis}
\thanks{This research was supported by the Natural Sciences and Engineering Council of Canada (NSERC) under grant number 569623 and the Fonds de recherche du Qu{\'e}bec -- Nature et technologies (FRQNT) under grant number 289126. The author would like to thank his doctoral advisors Tomasz Kaczynski and Madjid Allili for their support and guidance throughout the research and writing processes.}
\begin{document}

	\begin{abstract}
		A multiparameter filtration, or a multifiltration, may in many cases be seen as the collection of sublevel sets of a vector function, which we call a multifiltering function. The main objective of this paper is to obtain a better understanding of such functions through multiparameter discrete Morse (\mdm) theory, which is an extension of Morse-Forman theory to vector-valued functions. Notably, we prove algorithmically that any multifiltering function defined on a simplicial complex can always be approximated by a compatible \mdm function. Moreover, we define the Pareto set of a discrete multifiltering function and show that the concept links directly to that of critical simplices of a \mdm function. Finally, we experiment with these notions using triangular meshes.
	\end{abstract}

	\maketitle

	\section{Introduction}

	Topological data analysis (TDA) is a fast-growing branch of mathematics which proposes a panoply of tools to better understand and visualize data \citep{Carlsson2021, Peikert2012, Tierny2017}. One of the most popular method in TDA is persistent homology, which aims to detect the most significant topological features of a space filtered in accordance with some parameter or (real) function \citep{Edelsbrunner2002, Edelsbrunner2008}. In many contexts, notably when the given space is generated from a noisy sample of points, it is more convenient to consider multiple parameters, such as the scale and density of the data \citep{Botnan2023}. This motivated \citet{Carlsson2009} to extend the concept to multiparameter persistent homology. Multipersistence has already proven itself to be useful in applications, for instance in the analysis of biomedical data \citep{Vipond2021, Xia2015}.

	However, it is a challenge to compute multipersistent homology efficiently \citep{Carlsson2010}. Different ways to lessen the computational burden while preserving the topological information extracted have been proposed in the literature \citep{Cagliari2010, Cerri2011, Fugacci2019, Fugacci2023, Kerber2021, Lesnick2022}.

	One strategy we take a particular interest in is the use of discrete Morse theory, introduced by \citet{Forman1998}. Akin to its smooth counterpart \citep{Knudson2015, Matsumoto2002}, it provides a way to compute the homology of a whole space using only the critical simplices of a (discrete) Morse function defined on it and its (discrete) gradient field. While there exists other notable discrete or piecewise linear (PL) Morse theories \citep{Bestvina2008, Bloch2013, Edelsbrunner2010, Grunert2023}, Forman's version has the advantage of being highly versatile and adapts well to the present context. Indeed, over the years, it led to many applications in TDA \citep{DelgadoFriedrichs2015, Gyulassy2014, Hu2021, King2017} as well as the development of a novel combinatorial approach to dynamics \citep{Batko2020, Kaczynski2016, Lipinski2023, Mrozek2017}.

	More specifically, discrete Morse theory can be used in order to ease the computation of the persistent homology associated to a filtration \citep{Mischaikow2013}. Essentially, this involves reducing the size of the input data by keeping only the elements which are critical, that is, necessary for computations. In particular, when considering an injective filtering function defined on either the vertices of a simplicial complex \citep{King2005} or a cubical grid representing a digital image \citep{Robins2011}, it can be used to generate a discrete gradient field whose critical points represent the homological changes in the corresponding filtration.

	These ideas are pushed a step further to the multiparameter framework by \citet{Allili2017}. In their paper, the authors prove that a discrete gradient field, also called a Morse matching, can be used to reduce a Lefschetz complex, e.g. a simplicial or cubical complex, while preserving multipersistent homology. Moreover, they present the algorithm \Matching that takes as input a simplicial complex and a vector function defined on its vertices and outputs a Morse matching compatible with the multifiltration induced by the given multifiltering function. By restraining the considered input functions to component-wise injective vertex maps, the algorithm \Matching is improved in \citep{Allili2019} and a more efficient version, \ComputeDiscreteGradient, is proposed thereafter by Scaramuccia, Iuricich, De Floriani and Landi \cite{Scaramuccia2020}.

	Experiments in \citep{Allili2019} show that critical simplices output by these algorithms tend to form clusters which resemble smooth Pareto sets, as defined by \citet{Smale1975, Wan1975}, who extended the classical Morse theory to vector-valued functions. In the last few years, many links have been made between the Pareto sets of smooth $\R^2$-valued filtering functions and the homological changes in their corresponding filtration \citep{AssifPK2021, Budney2023, Cerri2019}. In discrete settings however, to our knowledge, only a few contributions on related subjects may be found in the literature. In \citep{Edelsbrunner2008a}, Jacobi sets of vector-valued PL mappings are used to characterize Reeb spaces, whereas in \citep{Huettenberger2013}, Pareto sets are defined for such maps and used to analyze and visualize data.

	This motivates the need to extend Morse-Forman theory to vector-valued functions. In \citep{Allili2019}, fundamental definitions needed to do so are proposed, and the elaboration of multiparameter discrete Morse (\mdm) theory is completed by \citet{Brouillette2022}. In this paper, many central results of Forman are generalized to the vectorial setting and the notion of critical components is introduced. Although it gives a solid theoretical framework, the work in \citep{Brouillette2022} does not address specifically how \mdm functions could be used in practice.

	Therefore, the objective of the present paper is to pick up where \citet{Brouillette2022} left off by bridging theory and practice. We do so in two ways. First, we propose an algorithm, \GenerateMDM, that takes as input any multifiltering function $f:K\rightarrow\R^\maxdim$ defined on a simplicial complex $K$ and generates a \mdm function which approximates $f$ and whose gradient field is compatible with the multifiltration induced by $f$, as stated in Theorem \ref{theo:gMDM}. Second, we define the concept of Pareto set for a discrete multifiltering function $f$ and see experimentally how its connected components compare to the critical components of the \mdm function output by \GenerateMDM when given $f$ as input.

	The paper is structured as follows. In Section \ref{sec:Preliminaries}, we recall some known concepts, mainly about multipersistent homology. In Section \ref{sec:MDMTheory}, a brief overview of \mdm theory is presented. Then, our main algorithm \GenerateMDM and subroutines are described in Section \ref{sec:Algorithms}. We analyze its complexity and compare it with previous similar algorithms \Matching and \ComputeDiscreteGradient. 
	Thereafter, the notion of discrete Pareto set is introduced in Section \ref{sec:Pareto}. Finally, in Section \ref{sec:ExperimentalResults}, we see how \GenerateMDM performs in practice and we experiment with the concept of discrete Pareto set and that of critical components of a \mdm function on triangular meshes.

	\section{Background}\label{sec:Preliminaries}

	\subsection{Simplicial complexes}

	Let $K_0$ be a finite set. We consider the elements of $K_0$ as vertices and may be seen either as abstract objects or as points in $\R^n$. A simplicial complex $K$ with vertices in $K_0$ is a collection of subsets $\sigma\subseteq K_0$ such that $\sigma\in K$ implies that every subset of $\sigma$ is also in $K$. A set $\sigma\in K$ of $p+1$ vertices is called a \emph{$p$-simplex}, or a \emph{simplex} of dimension $p=:\dim\sigma$, of $K$. We sometimes specify the dimension of a simplex by using the superscript $\sigma^{(p)}$ and we note $K_p$ the set of $p$-simplices of $K$. Also, we note $\dim K := \max_{\sigma\in K}\dim\sigma$ the \emph{dimension} of $K$. Furthermore, if $\tau\subseteq\sigma\in K$, we say that $\tau$ is a \emph{face} of $\sigma$, noted as $\tau\leq\sigma$, and $\sigma$ a \emph{coface} of $\tau$. We write $\tau < \sigma$ when $\tau\leq\sigma$ and $\tau\neq\sigma$. When $\tau<\sigma$ and $\dim\tau = \dim\sigma-1$, then $\tau$ is a \emph{facet} of $\sigma$ and $\sigma$ a \emph{cofacet} of $\tau$.

	Moreover, the set $K$ may be seen as a topological space by endowing it with the Alexandrov topology \citep{Barmak2011, McCord1966, Stong1966}. A set of simplices $S\subset K$ is open in this topology if $\tau\geq\sigma\in S$ implies that $\tau\in S$. Hence, the smallest neighbourhood of a simplex $\sigma\in K$ is its \emph{star} $\St\sigma= \{\tau\in K\ |\ \tau\geq\sigma\}$. Also, it could be shown that for any $S\subseteq K$, its \emph{closure} is $\Cl S = \{\tau\in K \ |\ \tau\leq\sigma\text{ for some }\sigma\in S\}$ and its \emph{interior} is $\Int S = \{\sigma\in S\ |\ \tau\in S\text{ for all }\tau\geq \sigma\}$. Using these sets, we can define the \emph{boundary} and the \emph{exit set} of $S$ as $\Bd S := \Cl S\backslash\Int S$ and $\Ex S := \Cl S\backslash S$ respectively.

	Finally, notice that in the Alexandrov topology, two simplices $\sigma,\tau\in K$ are neighbours, i.e. one of them belongs in the star of the other, if and only if $\sigma\leq\tau$ or $\sigma\geq\tau$. Following this idea, we could prove that the concepts of connectedness and path-connectedness are equivalent in this topology and $S\subseteq K$ is connected if and only if for all $\sigma,\tau\in S$, there is a sequence (a path) $\sigma=\sigma_0,\sigma_1,...,\sigma_r=\tau$ in $S$ such that, for each $i=1,...,r$, we have either $\sigma_{i-1}\leq\sigma_i$ or $\sigma_{i-1}\geq\sigma_i$.

	\subsection{Homology}\label{sec:Homology}

	We now outline some notions related to the homology and relative homology of simplicial complexes. For more details, we recommend \citep{Edelsbrunner2010} for a first introduction to the subject in the context of simplicial complexes and \citep{Hatcher2002} for a comprehensive and global presentation of algebraic topology.

	The homology of a simplicial complex $K$ is a collection $\Hom(K) = \{H_p(K)\}_{p\in\Z}$ of modules $H_p(K)$ over some fixed ring (more precisely a principal ideal domain, usually $\Z$ or $\Z_2$) which describe the fundamental topological features of $K$. The rank of $H_0(K)$ counts the number of connected components of $K$, that of $H_p(K)$ counts the number of $p$-dimensional holes for $1\leq p\leq\dim K$ and $H_p(K)\cong 0$ otherwise. We call $\beta_p(K):=\rank H_p(K)$ the \emph{$p^\text{th}$ Betti number} of $K$. We say two simplicial complexes $K$ and $K'$ have the same homology when $H_p(K)\cong H_p(K')$ for each $p\in\Z$, which we note $\Hom(K)\cong\Hom(K')$. Moreover, for a subcomplex $L\subseteq K$, the relative homology of $K$ with respect to $L$, noted $\Hom(K,L)$, may be seen as the homology of $K$ with all simplices in $L$ identified to a single point. The relative homology $\Hom(K,L)$ is useful when interested in the homology of $K$ only outside of the subcomplex $L$.

	Furthermore, note that the ring chosen to define $\Hom(K)$ plays an important role on the computation of the homology modules. In general, we choose $\Z$ since it yields more information about $K$, such as the torsion of the space. Nonetheless, we may also consider to use a field, e.g. $\Z_2$, to ease the computation of $\Hom(K)$. We illustrate the impact of choosing $\Z_2$ over $\Z$ with the following example. If $K$ triangulates the Klein bottle, it is known that
	\begin{gather*}
		H_p(K)\cong\begin{cases}
			0 & \text{ if } p=2,\\
			\Z\oplus\Z_2 & \text{ if } p=1,\\
			\Z & \text{ if } p=0,
		\end{cases}
		\qquad
		H_p(K;\Z_2)\cong\begin{cases}
			\Z_2 & \text{ if } p=2,\\
			\Z_2^2 & \text{ if } p=1,\\
			\Z_2 & \text{ if } p=0.
		\end{cases}
	\end{gather*}
	Therefore, we have $\beta_0(K) = 1$, $\beta_1(K) = 1$ and $\beta_2(K)=0$ when considering homology with coefficients in $\Z$, whereas $\beta_0(K;\Z_2) = 1$, $\beta_1(K;\Z_2) = 2$ and $\beta_2(K;\Z_2)=1$ when using coefficients in $\Z_2$. The $\Z_2$ summand in $H_1(K)$ corresponds to the torsion of the simplicial complex, which represents in this case the non-orientability of the Klein bottle. This information is lost when considering coefficients in $\Z_2$. Thus, throughout this paper, we use homology with coefficients in $\Z$, unless clearly stated otherwise (as in Table \ref{table:OptimalityCheck}).

	\subsection{Multiparameter persistent homology}\label{sec:Multipersistence}

	We may generalize the concept of homology to multiparameter persistent homology (also called multipersistent homology, multiparameter persistence or simply multipersistence) by considering multifiltered simplicial complexes \citep{Botnan2023, Carlsson2009}.

	Let $\preceq$ be the coordinate-wise order on $\R^\maxdim$, i.e. for $u,u'\in\R^\maxdim$, we have $u\preceq u'$ iff $u_i\leq u_i'$ for each $i=1,...,\maxdim$. We also note $u\precneqq u'$ if $u\preceq u'$ and $u\neq u'$. A simplicial complex $K$ is \emph{multifiltered} when it is considered together with a \emph{multifiltration}, which is a finite family of subcomplexes $\{K(u)\}_{u\in\R^\maxdim}$ of $K$ such that $u\preceq u'$ implies $K(u)\subseteq K(u')$. An example of a bifiltration, i.e. a multifiltration with $\maxdim=2$, is illustrated in Figure \ref{figSub:Multifiltration}.

	\pgfmathsetmacro{\gap}{0.382*0.5}
	\begin{figure}[ht]
		\centering
		\subcaptionbox{\label{figSub:Multifiltration}}[0.5\textwidth]{
			\centering
			\begin{tikzpicture}[scale=1.7]

				\coordinate (A) at (\gap, \gap);
				\coordinate (B) at (1-\gap, \gap);
				\coordinate (C) at (\gap, 1-\gap);
				\coordinate (D) at (1-\gap, 1-\gap);

				\draw[step=1, gray!50, thin] (0,0) grid (3,3);

				\node at (0.5, -\gap){\footnotesize $0$};
				\node at (1.5, -\gap){\footnotesize $1$};
				\node at (2.5, -\gap){\footnotesize $2$};
				\node at (-\gap, 0.5){\footnotesize $0$};
				\node at (-\gap, 1.5){\footnotesize $1$};
				\node at (-\gap, 2.5){\footnotesize $2$};


				\node[blue] at ($(C) + (0,1)$){$\bullet$};

				\node at ($(C) + (0,2)$){$\bullet$};

				\node[blue] at ($(B) + (1,0)$){$\bullet$};

				\node[blue] at ($(A) + (1,1)$){$\bullet$};
				\node at ($(B) + (1,1)$){$\bullet$};
				\node at ($(C) + (1,1)$){$\bullet$};

				\draw[very thick, blue] ($(A) + (1,2)$) -- ($(C) + (1,2)$);

				\node at ($(A) + (1,2)$){$\bullet$};
				\node at ($(B) + (1,2)$){$\bullet$};
				\node at ($(C) + (1,2)$){$\bullet$};

				\draw[very thick, blue] ($(B) + (2,0)$) -- ($(D) + (2,0)$);

				\node at ($(B) + (2,0)$){$\bullet$};
				\node[blue] at ($(D) + (2,0)$){$\bullet$};

				\fill[blue!25] ($(A) + (2,1)$) -- ($(B) + (2,1)$) -- ($(D) + (2,1)$) -- cycle;

				\draw[very thick, blue] ($(A) + (2,1)$) -- ($(B) + (2,1)$);
				\draw[very thick, blue] ($(A) + (2,1)$) -- ($(D) + (2,1)$);
				\draw[very thick] ($(B) + (2,1)$) -- ($(D) + (2,1)$);

				\node at ($(A) + (2,1)$){$\bullet$};
				\node at ($(B) + (2,1)$){$\bullet$};
				\node at ($(C) + (2,1)$){$\bullet$};
				\node at ($(D) + (2,1)$){$\bullet$};

				\fill[black!10] ($(A) + (2,2)$) -- ($(B) + (2,2)$) -- ($(D) + (2,2)$) -- cycle;
				\fill[blue!25] ($(A) + (2,2)$) -- ($(C) + (2,2)$) -- ($(D) + (2,2)$) -- cycle;

				\draw[very thick] ($(A) + (2,2)$) -- ($(B) + (2,2)$);
				\draw[very thick] ($(A) + (2,2)$) -- ($(C) + (2,2)$);
				\draw[very thick] ($(A) + (2,2)$) -- ($(D) + (2,2)$);
				\draw[very thick] ($(B) + (2,2)$) -- ($(D) + (2,2)$);
				\draw[very thick, blue] ($(C) + (2,2)$) -- ($(D) + (2,2)$);

				\node at ($(A) + (2,2)$){$\bullet$};
				\node at ($(B) + (2,2)$){$\bullet$};
				\node at ($(C) + (2,2)$){$\bullet$};
				\node at ($(D) + (2,2)$){$\bullet$};
			\end{tikzpicture}
		}
		\subcaptionbox{\label{figSub:MultipersistentModules}}[0.4\textwidth]{
			\centering
			\begin{tikzpicture}[scale=1.7]
				\draw[white, opacity=0] (0,-\gap-0.5) rectangle (2,2.5);

				\node (Z1) at (0,0) {$0$};
				\node (Z2) at (0,1) {$\Z$};
				\node (Z3) at (0,2) {$\Z$};
				\node (Z4) at (1,0) {$\Z$};
				\node (Z5) at (1,1) {$\Z^3$};
				\node (Z6) at (1,2) {$\Z^2$};
				\node (Z7) at (2,0) {$\Z$};
				\node (Z8) at (2,1) {$\Z^2$};
				\node (Z9) at (2,2) {$\Z$};

				\path [->] (Z1) edge (Z2);
				\path [->] (Z1) edge (Z4);
				\path [->] (Z2) edge (Z3);
				\path [->] (Z2) edge (Z5);
				\path [->] (Z3) edge (Z6);
				\path [->] (Z4) edge (Z5);
				\path [->] (Z4) edge (Z7);
				\path [->] (Z5) edge (Z6);
				\path [->] (Z5) edge (Z8);
				\path [->] (Z6) edge (Z9);
				\path [->] (Z7) edge (Z8);
				\path [->] (Z8) edge (Z9);
			\end{tikzpicture}
		}
		\caption{In \subref{figSub:Multifiltration}, a multifiltered simplicial complex. The blue simplices represent the ones newly added at each step. In \subref{figSub:MultipersistentModules}, a commutative diagram isomorphic to the associated $0^\text{th}$ multiparameter persistence module.}\label{fig:Multifiltration}
	\end{figure}

	Moreover, for every dimension $p$, each inclusion $K(u)\hookrightarrow K(u')$ induces a linear map $\iota_p^{u,u'}:H_p(K(u))\rightarrow H_p(K(u'))$ on the associated homology modules. Essentially, the image of a map $\iota_p^{u,u'}$ represents the homology classes of $K(u)$ of dimension $p$ still alive in $K(u')$. We define the \emph{$p^\text{th}$ multiparameter persistence module} of a given multifiltration as the family of all homology modules $H_p(K(u))$ together with the linear maps $\iota_p^{u,u'}:H_p(K(u))\rightarrow H_p(K(u'))$. Such a module is depicted in Figure \ref{figSub:MultipersistentModules}. Multiparameter persistence could be summarized as the study of the multiparameter persistence modules of a multifiltration $\{K(u)\}_{u\in\R^\maxdim}$ or, put simply, of the homological changes in $K(u)$ as the multiple parameters $u_1,...,u_\maxdim$ increase.

	In this paper, we are mostly interested in multifiltrations induced by a vector map. More precisely, given $f:K\rightarrow\R^\maxdim$, we consider the collection of sublevel sets $K(u):=\{\sigma\in K\ |\ f(\sigma)\preceq u\}$. It is easy to see that $K(u)\subseteq K(u')$ when $u\preceq u'$. However, for $K(u)$ to be a subcomplex of $K$ for all $u$, it is necessary and sufficient to assume that $\tau<\sigma$ implies $f(\tau)\preceq f(\sigma)$, which leads to the following definition.

	\begin{defn}
		We call $f:K\rightarrow\R^\maxdim$ an \emph{admissible map}, or a \emph{(multi)filtering function}, if $f(\tau)\preceq f(\sigma)$ for all pairs $\tau<\sigma$ in $K$.
	\end{defn}

	An example of multifiltering function is shown in Figure \ref{figSub:MultifilteringFunction}.

	\begin{figure}[ht]
		\centering
		\subcaptionbox{\label{figSub:MultifilteringFunction}}[0.45\textwidth]{
			\centering
			\begin{tikzpicture}[scale=3]
				\coordinate (A) at (0, 0);
				\coordinate (B) at (1, 0);
				\coordinate (C) at (0, 1);
				\coordinate (D) at (1, 1);

				\fill[black!10] (A) rectangle (D);
				\draw[thick] (A) -- (B);
				\draw[thick] (A) -- (C);
				\draw[thick] (A) -- (D);
				\draw[thick] (B) -- (D);
				\draw[thick] (C) -- (D);

				\node at (A){$\bullet$};
				\node at (B){$\bullet$};
				\node at (C){$\bullet$};
				\node at (D){$\bullet$};

				\node[below left] at (A) {$(1,1)$};
				\node[below right] at (B) {$(1,0)$};
				\node[above left] at (C) {$(0,1)$};
				\node[above right] at (D) {$(2,0)$};

				\node[below] at ($0.5*(A)+0.5*(B)$) {$(2,1)$};
				\node[left] at ($0.5*(A)+0.5*(C)$) {$(1,2)$};
				\node[xshift=0.125in, yshift=-0.125in] at ($0.5*(A)+0.5*(D)$) {$(2,1)$};
				\node[right] at ($0.5*(B)+0.5*(D)$) {$(2,0)$};
				\node[above] at ($0.5*(C)+0.5*(D)$) {$(2,2)$};

				\node[below] at ($0.3*(A)+0.4*(B)+0.3*(D)$) {$(2,1)$};
				\node at ($0.3*(A)+0.4*(C)+0.3*(D)$) {$(2,2)$};
			\end{tikzpicture}
		}
		\subcaptionbox{\label{figSub:MultifilteringFunctionGradient}}[0.45\textwidth]{
			\centering
			\begin{tikzpicture}[scale=3]
				\coordinate (A) at (0, 0);
				\coordinate (B) at (1, 0);
				\coordinate (C) at (0, 1);
				\coordinate (D) at (1, 1);

				\node[white, opacity=0, below] at ($0.5*(A)+0.5*(B)$) {$(2,1)$};

				\fill[black!10] (A) rectangle (D);
				\draw[thick] (A) -- (B);
				\draw[ultra thick, red] (A) -- (C);
				\draw[ultra thick, red] (A) -- (D);
				\draw[thick] (B) -- (D);
				\draw[thick] (C) -- (D);

				\node[red] at (A){$\bullet$};
				\node[red] at (B){$\bullet$};
				\node[red] at (C){$\bullet$};
				\node at (D){$\bullet$};

				\draw[ultra thick, -stealth] (D) -- ($0.5*(B)+0.5*(D)$);
				\draw[ultra thick, -stealth] ($0.5*(A)+0.5*(B)$) -- ($0.33*(A)+0.33*(B)+0.33*(D)$);
				\draw[ultra thick, -stealth] ($0.5*(C)+0.5*(D)$) -- ($0.33*(A)+0.33*(C)+0.33*(D)$);
			\end{tikzpicture}
		}
		\caption{In \subref{figSub:MultifilteringFunction}, the multifiltering function $f$ which induces the multifiltration in Figure \ref{figSub:Multifiltration}. In \subref{figSub:MultifilteringFunctionGradient}, a combinatorial vector field $\cV$ compatible with $f$. Pairs $\tau<\sigma$ such that $\cV(\tau) = \sigma$ are represented by the arrows, while the fixed points of $\cV$ are the simplices in red.}\label{fig:MultifilteringFunction}
	\end{figure}

	In previous related papers \citep{Allili2019, King2005, Scaramuccia2020}, the definition of admissible maps was more specific. Namely, a vertex map $f:K_0\rightarrow\R^\maxdim$ was considered and assumed to be component-wise injective, i.e. each $f_i$ had to be injective. Then, $f$ was extended to a multifiltering function on all $K$ by setting $f_i(\sigma) := \max_{v\in\sigma}f_i(v)$ for each $\sigma\in K$ and $i=1,...,\maxdim$. Here, we call $f$ a \emph{max-extension} of a vertex map if $f_i(\sigma) = \max_{v\in\sigma}f_i(v)$ for all $\sigma\in K$ and $i=1,...,\maxdim$. Although this type of map is very convenient, especially when working with point data, we will see in the examples below that some common filtering functions are not max-extensions. Hence, our broader definition of admissible maps makes it possible to apply the algorithms of this paper to a wider range of input.

	Finally, notice that for all $\sigma,\tau\in K$, we have $f(\tau)\preceq f(\sigma)$ iff $f_i(\tau)\leq f_i(\sigma)$ for each $i=1,...,\maxdim$. Therefore, $f$ is admissible iff each $f_i:K\rightarrow\R$ also is, so we can construct multifiltering function by combining multiple (real-valued) filtering functions. Thus, we end this subsection with a few examples of real-valued maps which may be combined to obtain a multifiltering function:
	\begin{itemize}
		\item Consider a simplicial complex $K$ embedded in $\R^n$. Then, we can use the max-extension of the projection $(x_1,x_2,...,x_n)\mapsto x_j$ of the vertices on the $j^\text{th}$ coordinate axis as a filtering function on $K$, or combine multiple projections to obtain a multifiltering function. In fact, we will see in Section \ref{sec:ExperimentalResults} that in many of our experiments, we use simplicial complexes embedded in $\R^3$ along with the max-extension of $(x,y,z)\mapsto (x,y)$.

		\item Let $\gamma:K_0\rightarrow(0,\infty)$ be a density function on the vertices $K_0\subset\R^n$ of a simplicial complex $K$, i.e. $\gamma$ associates the points in dense regions of $K_0$ to high values and those in sparse regions to low values. Then, we can use the max-extension of the inverse function $1/\gamma$ to obtain a filtration of $K$ in which simplices in denser regions appear first and those in sparser regions appear last.

		\item Let $K$ be the Delaunay triangulation of a set of points $K_0\subset\R^n$ and consider its filtration in alpha complexes $\{\mathrm{Alpha}_r(K_0)\}_{r\in\R}$. Then, for every $\sigma\in K$, note $r_\sigma$ the minimal radius parameter $r$ such that $\sigma\in\mathrm{Alpha}_r(K_0)$. The sublevel set $K(r)$ of the radius map $\sigma\mapsto r_\sigma$ is exactly $\mathrm{Alpha}_r(K_0)$ for all $r\in\R$, meaning that we can see the filtration $\{\mathrm{Alpha}_r(K_0)\}_{r\in\R}$ as the sublevel set of the radius map $\sigma\mapsto r_\sigma$. A similar reasoning is also valid for point sets filtered in \v{C}ech and Vietoris-Rips complexes.

		Notice that a radius map is not a max-extension of a vertex map since $r_{\{v\}} = 0$ for all $v\in K_0$. Therefore, this is an example of filtering function which can be considered as input for the algorithms presented in this paper, but not for those in \citep{Allili2019, King2005, Scaramuccia2020}.
	\end{itemize}

	\subsection{Combinatorial vector fields}

	To ease the computation of multipersistent homology, we can use combinatorial vector fields. Intuitively, we can see a combinatorial vector field on a simplicial complex $K$ as a partition of $K$ into singletons and pairs $\tau<\sigma$ such that $\tau$ is a facet of $\sigma$. More formally, a \emph{discrete vector field}, or a \emph{combinatorial vector field}, on a simplicial complex $K$ is an injective partial self-map $\cV:K\nrightarrow K$ such that
	\begin{enumerate}
		\item for each $\sigma\in\dom\cV$, either $\cV(\sigma) = \sigma$ or $\cV(\sigma)$ is a cofacet of $\sigma$;
		\item $\dom\cV\cup\image\cV = K$;
		\item $\dom\cV\cap\image\cV = \Fix\cV$.
	\end{enumerate}
	A combinatorial vector field $\cV$ is said to be \emph{acyclic} if all its nontrivial $\cV$-paths do not loop, i.e. for any sequence of simplices
	\begin{align*}
		\tau_0^{(p)},\sigma_0^{(p+1)},\tau_1^{(p)},\sigma_1^{(p+1)},\tau_2^{(p)},...,\sigma_{n-1}^{(p+1)},\tau_n^{(p)}
	\end{align*}
	in $K$ such that $\tau_i\in\dom\cV$, $\cV(\tau_i) = \sigma_i$ and $\sigma_i>\tau_{i+1}\neq\tau_i$ for each $i=0,...,n-1$ (where $n\geq 1$), we have $\tau_n\neq\tau_0$.

	If there exists an acyclic vector field $\cV$ on $K$ and a subcomplex $L\subset K$ such that $\Fix\cV\subseteq L$, then $K$ \emph{collapses} onto $L$ \citep{Brouillette2022}, which we note $K\searrow L$, and it is known that this implies $\Hom(K)\cong\Hom(L)$ \citep{Cohen1973}. This idea translates well in the context of multipersistence. Indeed, if $K$ is a simplicial complex with multifiltration $\{K(u)\}_{u\in\R^\maxdim}$, an acyclic discrete vector field $\cV:K\nrightarrow K$ can be used to reduce the size of $K$ while preserving its multipersistent homology modules \citep{Allili2017}. To do so, it needs to be \emph{compatible} with $\{K(u)\}_{u\in\R^\maxdim}$, meaning that $\sigma\in K(u)\Leftrightarrow \cV(\sigma)\in K(u)$ for all multiparameter $u$ and all $\sigma\in\dom\cV$. This leads to the following definition.

	\begin{defn}
		Consider a multifiltering function $f:K\rightarrow\R^\maxdim$ on a simplicial complex $K$. We say an acyclic combinatorial vector field $\cV$ is \emph{compatible with $f$} or \emph{$f$-compatible} if $f(\sigma) = f(\cV(\sigma))$ for all $\sigma\in\dom\cV$.
	\end{defn}

	In Figure \ref{figSub:MultifilteringFunctionGradient} is illustrated a discrete vector field compatible with the multifiltering function $f$ from Figure \ref{figSub:MultifilteringFunction}.

	\section{Multiparameter discrete Morse theory}\label{sec:MDMTheory}

	The concept of acyclic combinatorial vector fields was first introduced by \citet{Forman1998} as a central element of discrete Morse theory, which is adapted to the multiparameter setting in \citep{Allili2019, Brouillette2022}. We review here the key concepts and results of this extended theory.

	\subsection{Main definitions} We first define the concepts of \mdm function and gradient field, which are at the heart of both the theory and the algorithms that we will present in this paper.

	\begin{defn}\label{def:MDM}
		Consider $g:K\rightarrow\R^\maxdim$ defined on a simplicial complex $K$. For $\sigma^{(p)}\in K$, let
		\begin{align*}
			\Head{g}{\sigma} &= \left\lbrace \gamma^{(p+1)}>\sigma\ |\ g(\gamma)\preceq g(\sigma)\right\rbrace;\\
			\Tail{g}{\sigma} &= \left\lbrace \alpha^{(p-1)}<\sigma\ |\ g(\alpha)\succeq g(\sigma)\right\rbrace.
		\end{align*}
		We say $g$ is \emph{multidimensional discrete Morse}, or simply \mdm, if the following conditions hold for all $\sigma^{(p)}\in K$:
		\begin{enumerate}
			\item\label{enum:defMDMenum1} $\card \Head{g}{\sigma} \leq 1$.
			\item\label{enum:defMDMenum2} $\card \Tail{g}{\sigma} \leq 1$.
			\item\label{enum:defMDMenum3} For all cofacet $\gamma^{(p+1)} > \sigma$, either $g(\gamma)\preceq g(\sigma)$ or $g(\gamma)\succneqq g(\sigma)$.
			\item\label{enum:defMDMenum4} For all facet $\alpha^{(p-1)} < \sigma$, either $g(\alpha)\succeq g(\sigma)$ or $g(\alpha)\precneqq g(\sigma)$.
		\end{enumerate}
		If $\card \Head{g}{\sigma} = \card \Tail{g}{\sigma} = 0$, we say $\sigma$ is a \emph{critical simplex of index $p$} of $g$.
	\end{defn}

	When $g:K\rightarrow\R^\maxdim$ is \mdm, for all $\sigma\in K$, we can show that at most one of the sets $\Head{g}{\sigma}$ or $\Tail{g}{\sigma}$ is nonempty. Thus, the combinatorial vector field below is well defined.

	\begin{defn}\label{def:GradientField}
		The \emph{gradient vector field} of a \mdm function $g:K\rightarrow\R^\maxdim$ is the discrete vector field $\cV$ such that
		\begin{align*}
			\cV(\sigma) = \begin{cases}
				\sigma & \text{ if $\sigma$ is critical},\\
				\gamma & \text{ if } \Head{g}{\sigma} = \{\gamma\}\text{ for some }\gamma>\sigma,\\
				\text{undefined} & \text{ if } \Tail{g}{\sigma} \neq\emptyset.
			\end{cases}
		\end{align*}
		For any admissible map $f$, we say $g$ is \emph{$f$-compatible} if its gradient vector field is itself $f$-compatible.
	\end{defn}

	\begin{rem}
		Our definition of a $f$-compatible \mdm function is different from that in \citep{Allili2019}. Indeed, for a \mdm function to be $f$-compatible in the sense of \citep{Allili2019}, its gradient field has to be exactly the vector field output by the algorithm \Matching presented therein. Here, we say a \mdm function is $f$-compatible if its gradient field is compatible with the multifiltration induced by $f$, independently of any algorithm.
	\end{rem}

	Notice that for a \mdm function $g$, we have $\Head{g}{\sigma} = \{\gamma\}$ iff $\Tail{g}{\gamma} = \{\sigma\}$. Hence, the gradient field $\cV$ of $g$ is such that $\cV(\sigma) = \gamma$ iff $\cV(\gamma)$ is undefined and $\cV^{-1}(\gamma) = \sigma$, and the critical simplices of $g$ are exactly the fixed points $\cV$. Thus, we often refer to the fixed points of a gradient field $\cV$ as the critical simplices of $\cV$ instead. Moreover, we know that a discrete vector field is acyclic if and only if it is the gradient field of some \mdm function \cite[Proposition 4.9]{Brouillette2022}.

	Furthermore, it was shown in \citep{Allili2019} that if $f:K\rightarrow\R^\maxdim$ is an admissible map which is the max-extension of a component-wise injective vertex map, there exists a $f$-compatible \mdm function. One of the main result of this paper, Theorem \ref{theo:gMDM}, extends this result. Indeed, we will show that for any admissible map $f:K\rightarrow\R^\maxdim$, we can algorithmically define a \mdm function $g:K\rightarrow\R^\maxdim$ which, in addition to being $f$-compatible, is obtained from small perturbations of $f$, and is therefore as close as desired to the initial input.

	\subsection{Morse inequalities and (relative) perfectness}\label{sec:MorseIneqRelativePerfect}

	One of the central results of Morse theory are the Morse inequalities. These still hold in the multiparameter discrete setting. Namely, let $m_p$ be the number of critical points of index $p$ of a \mdm function $g:K\rightarrow\R^\maxdim$. For all $p\in\Z$, we have
	\begin{gather*}
		m_p \geq \beta_p(K).
	\end{gather*}
	We say $g$ and its gradient field are \emph{perfect} when the equality holds for each $p$ \citep{Ayala2012, Fugacci2020}. Note that these inequalities are also valid for Betti numbers with coefficients $\beta_p(K;\Z_2)$.

	In the same vein, inequalities that apply specifically to a gradient field $\cV$ compatible with a multifiltering function $f$ are established by \citet{Landi2022}. Indeed, for each $u\in f(K)$ and each $p\in\Z$, the number $m_p(u)$ of critical simplices of $\cV$ in the level set $f^{-1}(u) =: L_u$ is bounded by
	\begin{gather*}
		m_p(u) \geq \rank H_p\left(K(u), \bigcup_{u'\precneqq u}K(u')\right).
	\end{gather*}
	When the equality holds for each $u\in f(K)$ and each $p\in\Z$, we say $\cV$ is \emph{relative-perfect} or \emph{perfect relatively to $f$}. The concept was first introduced in \citep{Fugacci2020} for real-valued filtering functions, while related inequalities have been proved in \citep{Guidolin2023} thereafter.

	\begin{ex}\label{ex:RelativePerfectCircle}
		Consider the triangulated circle $K$ embedded in $\R^2$ as in Figure \ref{fig:RelativePerfectCircle} and let $f:K\rightarrow\R^2$ be the max-extension of the map which associates each vertex to its coordinates $(x,y)\in\R^2$. In this particular case, $f$ happens to be a \mdm function, and its gradient field is as in Figure \ref{fig:RelativePerfectCircle}. The Betti numbers of the circle are $\beta_0(K)=\beta_1(K) = 1$, so $f$ has too many critical simplices to be a perfect \mdm function. Nonetheless, it is relative-perfect. Indeed, if $u\in\R^\maxdim$ is such that $m_p(u) = 0$ for each $p$, meaning that $L_u$ does not contain any critical simplex, then we trivially have the equality $m_p(u) = \rank H_p\left(K(u), \bigcup_{u'\precneqq u}K(u')\right) = 0$. Otherwise, if $u\in\R^\maxdim$ is such that $m_p(u)>0$ for some $p$, we can check that $L_u$ contains a single critical simplex $\sigma$ of index $p\in\{0,1\}$, i.e. either a critical vertex or a critical edge. We could then compute that $H_p\left(K(u), \bigcup_{u'\precneqq u}K(u')\right)\cong\Z$, so $m_p(u) = \rank H_p\left(K(u), \bigcup_{u'\precneqq u}K(u')\right) = 1$.

		\begin{figure}[ht]
			\centering
			\begin{tikzpicture}[scale=1.75]
				\draw[help lines, color=gray!50, dashed, step=0.5] (-1.2,-1.2) grid (1.2,1.2);
				\draw[->] (-1.2,0)--(1.2,0) node[right]{$x$};
				\draw[->] (0,-1.2)--(0,1.2) node[above]{$y$};
				\draw[ultra thick, orange] (0:1) -- (30:1)node{$\bullet$} -- (60:1)node{$\bullet$} -- (90:1);
				\draw[thick] (90:1)node{$\bullet$} -- (120:1)node{$\bullet$} -- (150:1)node{$\bullet$} -- (180:1);
				\draw[thick] (270:1) -- (300:1)node{$\bullet$} -- (330:1)node{$\bullet$} -- (360:1)node{$\bullet$};
				\draw[ultra thick, red] (180:1)node{$\bullet$} -- (210:1)node{$\bullet$} -- (240:1)node{$\bullet$} -- (270:1)node{$\bullet$};
				\draw[ultra thick, -stealth] (90:1)node{$\bullet$} -- ({(cos(90)+cos(120))/2},{(sin(90)+sin(120))/2});
				\draw[ultra thick, -stealth] (120:1)node{$\bullet$} -- ({(cos(150)+cos(120))/2},{(sin(150)+sin(120))/2});
				\draw[ultra thick, -stealth] (150:1)node{$\bullet$} -- ({(cos(150)+cos(180))/2},{(sin(150)+sin(180))/2});
				\draw[ultra thick, -stealth] (0:1)node{$\bullet$} -- ({(cos(0)+cos(330))/2},{(sin(0)+sin(330))/2});
				\draw[ultra thick, -stealth] (330:1)node{$\bullet$} -- ({(cos(300)+cos(330))/2},{(sin(300)+sin(330))/2});
				\draw[ultra thick, -stealth] (300:1)node{$\bullet$} -- ({(cos(270)+cos(300))/2},{(sin(270)+sin(300))/2});
			\end{tikzpicture}
			\caption{The gradient field $\cV$ of the \mdm function from Example \ref{ex:RelativePerfectCircle}. Pairs $v<\sigma$ such that $\cV(v) = \sigma$ are represented by the arrows, while the critical simplices are shown in red and orange.}\label{fig:RelativePerfectCircle}
		\end{figure}
	\end{ex}

	\subsection{Critical components}\label{sec:MDMcritComponents}

	From both theory \citep{Budney2023, Smale1975, Wan1975} and experiments \citep{Allili2019, Scaramuccia2018}, we deduce that in many cases, critical points of a \mdm function are not isolated as it is the case for classical Morse functions. They instead appear in clusters, as we can see in Example \ref{ex:RelativePerfectCircle}, where two obvious critical connected components (represented in red and orange in Figure \ref{fig:RelativePerfectCircle}) are observed for the considered \mdm function. However, experimentally, we can also notice that these clusters are not always connected \citep{Allili2019, Brouillette2022}. Thus, another way to partition critical simplices into components is proposed in \citep{Brouillette2022}.

	Let $R$ be a relation on the set $\cC$ of critical simplices of a \mdm function and consider its transitive closure $\bar{R}$. Namely, we have $\sigma\bar{R}\tau$ if there is a sequence $\sigma=\sigma_0,\sigma_1,...,\sigma_n=\tau\in\cC$ such that $\sigma_{i-1}R\sigma_i$ for each $i=1,...,n$. If $R$ is reflexive and symmetric, by definition, it follows that $\bar{R}$ is an equivalence relation, so it can be used to partition $\cC$. We then call a critical component (with respect to $\bar{R}$) an equivalence class of $\bar{R}$. As an example, if $R$ is such that $\sigma R\tau\Leftrightarrow \sigma\leq\tau\text{ or }\sigma\geq\tau$, then $\bar{R}$ yields the partition of $\cC$ into connected components (with respect to the Alexandrov topology).

	As mentioned above, connectedness is too strict of a criteria to define critical components. Thus, we consider the following definition, first introduced as Definition 7.3 in \citep{Brouillette2022}. It makes use of the idea of dynamical connectedness between critical simplices. More precisely, for two critical simplices $\sigma$ and $\tau$, we write $\sigma\rightconnects{g}\tau$ when $\sigma$ is connected to $\tau$ in the flow induced by the gradient field $\cV$ of $g$. This is the case, for example, when $\sigma\geq\tau$ or when there is a $\cV$-path going from a face of $\sigma$ to a coface of $\tau$. For a formal definition of the flow of $\cV$, see \citep{Brouillette2022}.
	\pagebreak
	\begin{defn}\label{def:CriticalComponentSimG}
		Let $g:K\rightarrow\R^\maxdim$ be a \mdm function and $\cC$ be the set of its critical simplices. Consider the relation $R_g$ defined on $\cC$ such that $\sigma R_g\tau$ if
		\begin{enumerate}
			\item\label{item:DefR_g1} $g_i(\sigma) = g_i(\tau)$ for some $i = 1,...,\maxdim$;
			\item\label{item:DefR_g2} either $\sigma\rightconnects{g}\tau$ or $\sigma\leftconnects{g}\tau$.
		\end{enumerate}
		Then, $\Sim{g}:=\bar{R}_g$ is an equivalence relation, and we define a \emph{critical component} of $g$ (with respect to $\Sim{g}$) as an equivalence class of $\Sim{g}$.
	\end{defn}

	The intuition behind condition \ref{item:DefR_g1} is that, when $g$ is a multifiltering function, then it is possible for $\sigma$ and $\tau$ to enter the induced multifiltration at a same "step" if $g_i(\sigma) = g_i(\tau)$ for some $i$. Also, it is well known from Morse-Forman theory \citep{Forman1998} that the homology of a simplicial complex may be computed from the critical simplices of a gradient field defined on it and the connections between them. Thus, condition \ref{item:DefR_g2} implies that $\sigma$ and $\tau$ are connected and can interact with each other on the homological level, which motivates the definition of $\Sim{g}$.

	For toy examples, defining critical components using $\Sim{g}$ yields desirable results, as argued in \citep{Brouillette2022}. For instance, the \mdm function from Example \ref{ex:RelativePerfectCircle} has two critical components with respect to $\Sim{g}$, represented in red and orange in Figure \ref{fig:RelativePerfectCircle}. In Section \ref{sec:ExperimentsCritComp}, we will see how this definition of critical components of a \mdm function performs on a larger scale. Also, a new definition of critical components will be proposed, Definition \ref{def:CritComponentsF}, which is adapted specifically to \mdm functions which are compatible with a given multifiltering function.

	\section{Algorithms}\label{sec:Algorithms}

	In this section, we see how a multifiltering function $f:K\rightarrow\R^\maxdim$ defined on a simplicial complex $K$ can be used to build a compatible \mdm function $g:K\rightarrow\R^\maxdim$. The proposed algorithms combine elements of those in \citep{Allili2019, Robins2011, Scaramuccia2020}.

	\subsection{Description}\label{sec:AlgoDescription}

	We explain here the general idea of the algorithms.

	First of all, apart from the complex $K$ and the map $f$, the main algorithm takes two input parameters: an injective real-valued map $I:K\rightarrow\R$, called an \emph{indexing map}, and a small parameter $\epsilon>0$. The indexing map $I$ is used to order the simplices of $K$ while the parameter $\epsilon$ is used to ensure the \mdm function $g$ computed is such that, for all $\sigma\in K$ and all $i=1,...,\maxdim$,
	\begin{gather}
		|g_i(\sigma) - f_i(\sigma)| < \epsilon.\label{eq:g-f<eps}
	\end{gather}

	The indexing map is \emph{admissible} if it satisfies the following property:
	\begin{gather*}
		\alpha < \sigma\Rightarrow I(\alpha) < I(\sigma).
	\end{gather*}
	There are various ways to define an admissible indexing map. A simple one is to assign a number to each simplex as $K$ is constructed. More precisely, a simplicial complex $K$ can be implemented so that each time a new simplex $\sigma$ is inserted in $K$, we ensure all its faces $\alpha<\sigma$ are inserted beforehand and we define $I(\sigma)$ as the number of simplices that were inserted in $K$ before $\sigma$. Thus, by proceeding this way, we include the computing of $I$ in the implementation of $K$. Note that the choice of indexing map may affect the output of the algorithm. This will be discussed in more details in Section \ref{sec:ExperimentsTriangulationIndexingMap}.

	In order to generate a \mdm function $g:K\rightarrow\R^\maxdim$, we opt for a divide-and-conquer approach by partitioning the domain $K$ into level sets $\{L_u\ |\ u\in f(K)\}$. To compute the partition, we use an auxiliary function \LevelSets which produces a dictionary associating each $u\in f(K)$ to its level set $L_u$. More specifically, \LevelSets creates an empty dictionary and then, for each $\sigma\in K$, either adds $\sigma$ to $L_{f(\sigma)}$ if defined or creates an entry associating $f(\sigma)$ to $\{\sigma\}$ if $L_{f(\sigma)}$ is yet undefined. The function \LevelSets returns the level sets of $K$ ordered so that for any $u,u'\in f(K)$, if $u'\precneqq u$, then $L_{u'}$ comes before $L_u$. This can be done by using the lexicographical order on $f(K)\subset\R^\maxdim$.

	Now, we explain how \GenerateMDM, as described in Algorithm \ref{algo:GenerateMDM}, generates the \mdm function $g:K\rightarrow\R^\maxdim$ and its gradient field $\cV:K\nrightarrow K$, represented by dictionaries. First, in order to define $g$, a parameter $\delta > 0$ is computed. To ensure $g$ verifies equation \ref{eq:g-f<eps}, we choose $\delta$ so that $\delta\leq \frac{\epsilon}{|K|}$ and $\delta \leq \frac{|f_1(\sigma)-f_1(\tau)|}{|K|}$ for all $\sigma,\tau\in K$ with $f_1(\sigma)\neq f_1(\tau)$. The reason why $\delta$ is defined as such will become clear in the proofs of Proposition \ref{prop:ComputeGworks} and Theorem \ref{theo:gMDM}. Then, function \LevelSets partitions the complex $K$ into level sets $L_u$ and both $g$ and $\cV$ are generated locally on each $L_u$. As mentioned above, the level sets are considered in some order such that if $u'\precneqq u$, then $L_{u'}$ comes before $L_u$.

	\begin{algorithm}\caption{GenerateMDM($K,f,I, \epsilon$)}\label{algo:GenerateMDM}
		\begin{algorithmic}[1]
			\Require A finite simplicial complex $K$; an admissible map $f:K\rightarrow\R^\maxdim$; an admissible index mapping $I$; a parameter $\epsilon > 0$.
			\Ensure Two dictionaries $g$ and $\cV$, representing a \mdm function $g:K\rightarrow\R^\maxdim$ and its gradient field $\cV:K\nrightarrow K$.
			\State Define $g$ and $\cV$ as two empty dictionaries
			\If{$f_1$ is constant}
			\State $\delta = \frac{\epsilon}{|K|}$\label{algoLine:DefineDeltaF1Constant}
			\Else
			\State $\delta = \frac{1}{|K|}\cdot\min\{\epsilon, \varepsilon\}$, where $\varepsilon \leq \min\lbrace |f_1(\sigma)-f_1(\tau)| : f_1(\sigma)\neq f_1(\tau)\rbrace$\label{algoLine:DefineDeltaF1NonConstant}
			\EndIf
			\ForAll{$L_u\in \LevelSets(K,f)$}\label{algoLine:LevelSets}
			\State $(g,\cV) = \ExpandMDM(f, g, \cV, L_u, I, \delta)$\label{algoLine:ExpandMDM}
			\EndFor\\
			\Return $(g, \cV)$
		\end{algorithmic}
	\end{algorithm}

	The construction of $g$ and $\cV$ on some level set $L_u$ is done by \ExpandMDM. The function is described in Algorithm \ref{algo:ExpandMDM} and follows the idea of Algorithm 2 in \citep{Scaramuccia2020}, which itself uses elements of algorithms in \citep{Allili2019, Robins2011}.

	We consider two (minimal) priority queues \PQzero and \PQone and a dictionary \processed associating each $\sigma\in L_u$ to a boolean value, which is initially \False for each $\sigma$. Then, we add to \PQzero and \PQone all simplices in $L_u$ with respectively zero and one facet in $L_u$ still unprocessed by the algorithm. Every simplex $\sigma$ in either queue is given priority $I(\sigma)$. Next, for as long as \PQone is nonempty, we consider $\sigma\in\PQone$ with minimal priority, that is with minimal value $I(\sigma)$, and its unique unprocessed facet $\tau\in L_u$, if it is still unprocessed. The two simplices are then \emph{processed as a pair}, meaning that we set $\cV(\tau)=\sigma$ and define both $g(\sigma)$ and $g(\tau)$ using the same value in $\R^\maxdim$. The values of $g$ are set using function \ComputeG, presented in Algorithm \ref{algo:ComputeG}, which will be explained in details below. After processing $\sigma$ and $\tau$, routine \AddCofacets updates \PQone by adding to it all their cofacets $\gamma\in L_u$ with $\NumUnprocFacets(\gamma,L_u)=1$. When \PQone is empty, meaning that no simplex is available for pairing, then the simplex $\sigma\in\PQzero$ with minimal $I(\sigma)$ is treated similarly: it is \emph{processed as critical} by defining $\cV(\sigma) = \sigma$, the value $g(\sigma)$ is set using the function \ComputeG and \PQone is updated with routine \AddCofacets. This process ends when both \PQone and \PQzero are empty.

	\begin{algorithm}\caption{ExpandMDM($f, g,\cV,L_u,I,\delta$)}\label{algo:ExpandMDM}
		\begin{algorithmic}[1]
			\Require An admissible map $f:K\rightarrow\R^\maxdim$; one of its level set $L_u$; an admissible index mapping $I$; a parameter $\delta > 0$; two dictionaries $g$ and $\cV$, representing a \mdm function and its gradient field, to be defined on $L_u$.
			\Ensure Dictionaries $g$ and $\cV$ extended to $L_u$.
			\State Define \processed as a dictionary associating each $\sigma\in L_u$ to the boolean value \False
			\State Define \PQzero and \PQone as two empty priority queues
			\ForAll{$\sigma\in L_u$}\label{algoLine:InitialAddLoopBegins}
			\If{$\NumUnprocFacets(\sigma,L_u)=0$} \label{algoLine:InitialCheckZero}
			\State add $\sigma$ to \PQzero \label{algoLine:InitialAddToPQzero}
			\ElsIf{$\NumUnprocFacets(\sigma,L_u)=1$} \label{algoLine:InitialCheckOne}
			\State add $\sigma$ to \PQone \label{algoLine:InitialAddToPQone}
			\EndIf
			\EndFor
			\While{$\PQone\neq\emptyset$ or $\PQzero\neq\emptyset$} \label{algoLine:LoopPQonePQzeroBegins}
			\While{$\PQone\neq\emptyset$}\label{algoLine:LoopPQoneBegins}
			\State $\sigma = \texttt{PQone.pop\_front}$ \label{algoLine:PopFromPQone}
			\If{$\NumUnprocFacets(\sigma,L_u)=0$}\label{algoLine:CheckZero}
			\State add $\sigma$ to \PQzero \label{algoLine:MoveFromPQoneToPQzero}
			\Else
			\State $\tau = \UnprocFacet(\sigma,L_u)$ \label{algoLine:UnprocessedFacet}
			\State define $g(\sigma)=g(\tau) = \ComputeG(f,g,\delta,\sigma,\tau)$ \label{algoLine:DefineGpair}
			\State define $\cV(\tau)=\sigma$ \label{algoLine:DefineVpair}
			\State $\processed(\tau) = \True$; $\processed(\sigma) = \True$ \label{algoLine:DeclareProcessAfterPairing}
			\State $\AddCofacets(\sigma,L_u, I, \PQone)$; $\AddCofacets(\tau,L_u, I, \PQone)$ \label{algoLine:AddCofacetsAfterPairing}
			\EndIf
			\EndWhile
			\If{$\PQzero\neq\emptyset$} \label{algoLine:PQzeroNeqEmpty}
			\State $\sigma = \texttt{PQzero.pop\_front}$ \label{algoLine:PopFromPQzero}
			\If{$\processed(\sigma) = \False$}
			\State define $g(\sigma) = \ComputeG(f,g,\delta,\sigma)$ \label{algoLine:DefineGcrit}
			\State define $\cV(\sigma)=\sigma$ \label{algoLine:DefineVcrit}
			\State $\processed(\sigma) = \True$ \label{algoLine:DeclareProcessAfterCritical}
			\State $\AddCofacets(\sigma,L_u, I, \PQone)$ \label{algoLine:AddCofacetsAfterCritical}
			\EndIf
			\EndIf
			\EndWhile\\
			\Return $(g,\cV)$
		\end{algorithmic}
	\end{algorithm}

	Finally, each simplex $\sigma\in K$ is associated to a value $g(\sigma)\in\R^\maxdim$ using the function \ComputeG, as described by Algorithm \ref{algo:ComputeG}. Using the map $f$, the parameter $\delta>0$ and the dictionary $g$, which is not yet defined on a simplex $\sigma$ and one of its facet $\tau<\sigma$, \ComputeG computes and returns a value $w\in\R^\maxdim$ used to define $g(\sigma)$ and $g(\tau)$. Note that the facet $\tau$ is optional: the algorithm works as well when only a simplex $\sigma$ is given. The algorithm proceeds differently whether $\sigma$ is a vertex or not. When considering a vertex $\sigma$, \ComputeG simply returns $f(\sigma)$. Otherwise, $w$ is computed as follows. We consider the set $A$ of facets $\alpha$ of $\sigma$ such that $\alpha\neq\tau$. If no simplex $\tau$ is given to \ComputeG as parameter, $A$ is simply the set of facets of $\sigma$. Then, let $w = (w_1,...,w_\maxdim)$ where
	\begin{align*}
		w_1 &= \max\left(\{f_1(\sigma)\}\cup\{g_1(\alpha)\ |\ \alpha\in A\}\right),\\
		w_i &= f_i(\sigma)\text{ for } i=2,...,\maxdim.
	\end{align*}
	If $w=g(\alpha)$ for some $\alpha\in A$, then we increase by $\delta$ the value of $w_1$. By doing so, we have that $\alpha\in \Tail{g}{\sigma}\Rightarrow \alpha=\tau$, which ensures conditions \ref{enum:defMDMenum1} and \ref{enum:defMDMenum2} of Definition \ref{def:MDM} of a \mdm function are satisfied, as we will see in the proofs of Proposition \ref{prop:ComputeGworks} and Theorem \ref{theo:gMDM}. Finally, $w$ is returned and used in function \ExpandMDM to define $g(\sigma)$ and $g(\tau)$.

	\begin{algorithm}\caption{ComputeG($f,g,\delta,\sigma,\tau$)}\label{algo:ComputeG}
		\begin{algorithmic}[1]
			\Require An admissible map $f:K\rightarrow\R^\maxdim$; a parameter $\delta>0$; a simplex $\sigma$; a facet $\tau<\sigma$ (optional); a dictionary $g$ with $g(\alpha)$ defined for all facets $\alpha<\sigma$ such that $\alpha\neq \tau$.
			\Ensure A value $w\in\R^\maxdim$.
			\If {$\sigma$ is a vertex}
			\State $w = f(\sigma)$ \label{algoLine:DefU=fSigma}
			\Else \label{algoLine:ElseNotVertex}
			\State $A = \left\lbrace \text{ facets } \alpha<\sigma\ |\ \alpha\neq \tau\right\rbrace$
			\State $w_1 = \max\left(\{f_1(\sigma)\}\cup\{g_1(\alpha)\ |\ \alpha\in A\}\right)$ \label{algoLine:DefineU1}
			\State $w_i=f_i(\sigma)$ for $i=2,...,\maxdim$
			\State $w = (w_1,...,w_\maxdim)$ \label{algoLine:DefineU}
			\If{$w = g(\alpha)$ for some $\alpha\in A$}\label{algoLine:IfU=GAlpha}
			\State $w_1 \mathrel{+}= \delta$\label{algoLine:U1+=Delta}
			\EndIf
			\EndIf\\
			\Return $w$
		\end{algorithmic}
	\end{algorithm}

	\begin{figure}[ht]
		\centering
		\subcaptionbox{\label{figSub:AlgorithmExampleI}}[0.32\textwidth]{
			\centering
			\begin{tikzpicture}[scale=2]
				\coordinate (A) at (0, 0);
				\coordinate (B) at (1, 0);
				\coordinate (C) at (0, 1);
				\coordinate (D) at (1, 1);

				\node[white, opacity=0, below] at ($0.5*(A)+0.5*(B)$) {\small $(1,2)$};

				\fill[black!10] (A) rectangle (D);
				\draw[thick] (A) -- (B);
				\draw[thick] (A) -- (C);
				\draw[thick] (A) -- (D);
				\draw[thick] (B) -- (D);
				\draw[thick] (C) -- (D);

				\node at (A){$\bullet$};
				\node at (B){$\bullet$};
				\node at (C){$\bullet$};
				\node at (D){$\bullet$};

				\node[below left] at (A) {$v_1$};
				\node[below right] at (B) {$v_2$};
				\node[above left] at (C) {$v_0$};
				\node[above right] at (D) {$v_3$};

				\node[below] at ($0.5*(A)+0.5*(B)$) {$e_5$};
				\node[left] at ($0.5*(A)+0.5*(C)$) {$e_4$};
				\node[left] at ($0.5*(A)+0.5*(D)$) {$e_8$};
				\node[right] at ($0.5*(B)+0.5*(D)$) {$e_6$};
				\node[above] at ($0.5*(C)+0.5*(D)$) {$e_7$};

				\node at ($0.25*(A)+0.5*(B)+0.25*(D)$) {$t_{10}$};
				\node at ($0.25*(A)+0.5*(C)+0.25*(D)$) {$t_9$};
			\end{tikzpicture}
		}
		\subcaptionbox{\label{figSub:AlgorithmExampleF}}[0.32\textwidth]{
			\centering
			\begin{tikzpicture}[scale=2]
				\coordinate (A) at (0, 0);
				\coordinate (B) at (1, 0);
				\coordinate (C) at (0, 1);
				\coordinate (D) at (1, 1);

				\fill[orange!25] (A) rectangle (D);
				\draw[ultra thick, purple] (A) -- (B);
				\draw[ultra thick, purple] (A) -- (C);
				\draw[ultra thick, orange] (A) -- (D);
				\draw[ultra thick, blue] (B) -- (D);
				\draw[ultra thick, blue] (C) -- (D);

				\node[purple] at (A){$\bullet$};
				\node[green] at (B){$\bullet$};
				\node[green] at (C){$\bullet$};
				\node[blue] at (D){$\bullet$};

				\node[below left] at (A) {\small $(1,2)$};
				\node[below right] at (B) {\small $(0,0)$};
				\node[above left] at (C) {\small $(0,0)$};
				\node[above right] at (D) {\small $(2,1)$};

				\node[below] at ($0.5*(A)+0.5*(B)$) {\small $(1,2)$};
				\node[left] at ($0.5*(A)+0.5*(C)$) {\small $(1,2)$};
				\node[xshift=-0.2in] at ($0.5*(A)+0.5*(D)$) {\small $(2,2)$};
				\node[right] at ($0.5*(B)+0.5*(D)$) {\small $(2,1)$};
				\node[above] at ($0.5*(C)+0.5*(D)$) {\small $(2,1)$};

				\node at ($0.25*(A)+0.5*(B)+0.25*(D)$) {\small $(2,2)$};
				\node at ($0.25*(A)+0.5*(C)+0.25*(D)$) {\small $(2,2)$};
			\end{tikzpicture}
		}
		\subcaptionbox{\label{figSub:AlgorithmExampleOut}}[0.32\textwidth]{
			\centering
			\begin{tikzpicture}[scale=2]
				\coordinate (A) at (0, 0);
				\coordinate (B) at (1, 0);
				\coordinate (C) at (0, 1);
				\coordinate (D) at (1, 1);

				\fill[red!25] (A) -- (B) -- (D) -- cycle;
				\fill[black!10] (A) -- (C) -- (D) -- cycle;

				\draw[ultra thick, red] (A) -- (B);
				\draw[thick] (A) -- (C);
				\draw[thick] (A) -- (D);
				\draw[thick] (B) -- (D);
				\draw[ultra thick, red] (C) -- (D);

				\draw[ultra thick, -stealth] (A) -- ($0.5*(A)+0.5*(C)$);
				\draw[ultra thick, -stealth] (D) -- ($0.5*(B)+0.5*(D)$);
				\draw[ultra thick, -stealth] ($0.5*(A)+0.5*(D)$) -- ($0.33*(A)+0.33*(C)+0.33*(D)$);

				\node at (A){$\bullet$};
				\node[red] at (B){$\bullet$};
				\node[red] at (C){$\bullet$};
				\node at (D){$\bullet$};

				\node[below left] at (A) {\small $(1,2)$};
				\node[below right] at (B) {\small $(0,0)$};
				\node[above left] at (C) {\small $(0,0)$};
				\node[above right] at (D) {\small $(2,1)$};

				\node[below] at ($0.5*(A)+0.5*(B)$) {\small $(1+\delta,2)$};
				\node[left] at ($0.5*(A)+0.5*(C)$) {\small $(1,2)$};
				\node[right] at ($0.5*(B)+0.5*(D)$) {\small $(2,1)$};
				\node[above] at ($0.5*(C)+0.5*(D)$) {\small $(2+\delta,1)$};

				\node[below] at ($0.35*(A)+0.35*(B)+0.3*(D)$) {\small $(2+2\delta,2)$};
				\node at ($0.3*(A)+0.35*(C)+0.35*(D)$) {\small $(2+\delta,2)$};
			\end{tikzpicture}
		}
		\caption{In \subref{figSub:AlgorithmExampleI}, a simplicial complex $K$ with its simplices labelled in accordance with some admissible index map $I$. In \subref{figSub:AlgorithmExampleF}, an admissible map $f$ defined on $K$. Its four level sets are represented in green, purple, blue and orange. In \subref{figSub:AlgorithmExampleOut}, the output $g$ and $\cV$ of \GenerateMDM, where $\delta$ is as defined in Algorithm \ref{algo:GenerateMDM}. The red simplices represent those that are critical.} \label{fig:AlgorithmExample}
	\end{figure}

	An example of output for \GenerateMDM is shown in Figure \ref{fig:AlgorithmExample}.

	\begin{rem}
		The procedure \ComputeG returns a value using all facets $\alpha$ of a simplex $\sigma$, where each $\alpha$ may or may not be in the same level set as $\sigma$. That being said, \ExpandMDM only pairs together simplices that belong in a same level set. Hence, if we were to adapt the main algorithm \GenerateMDM in order to produce only a gradient field, it would be fairly easy to parallelize the procedure by calling \ExpandMDM separately on each level set.
	\end{rem}

	\subsection{Correctness}\label{sec:Correctness}

	We now explain broadly why the proposed algorithms produce the desired result. The formal proofs are available in Appendix \ref{sec:ProofCorrectness}.

	First, we consider the routine \ExpandMDM, described in Algorithm \ref{algo:ExpandMDM}. When \ExpandMDM is called, we see that all simplices with $0$ or $1$ facet in $L_u$, facet which is necessarily unprocessed at this point, immediately enter \PQzero or \PQone. These simplices eventually get processed, so the number of unprocessed facets of their cofacets gradually decreases. When the number of unprocessed facets of a simplex reaches $1$, it is added to \PQone by the subroutine \AddCofacets, so it can also eventually be processed. In other words, \ExpandMDM first processes the lower-dimensional simplices and slowly works its way up to higher-dimensional simplices until all of $L_u$ is processed. Since the processed simplices never re-enter \PQzero or \PQone, we obtain the next result.

	\begin{restatable}{re-prop}{propProcessedOnce}\label{prop:ProcessedOnce}
		For all $u\in f(K)$, each simplex in $L_u$ is processed exactly once by Algorithm \ref{algo:ExpandMDM}.
	\end{restatable}

	Hence, since Proposition \ref{prop:ProcessedOnce} stands for every level set $L_u\subset K$ and each $L_u$ is processed exactly once by the main routine \GenerateMDM, presented in Algorithm \ref{algo:GenerateMDM}, we deduce that every $\sigma\in K$ is processed exactly once by \GenerateMDM.

	Moreover, recall that if two simplices $\alpha<\sigma$ are part of a same priority queue, $\alpha$ has priority over $\sigma$ because $I(\alpha)<I(\sigma)$. Hence, from the reasoning leading to Proposition \ref{prop:ProcessedOnce}, we deduce the following key observation: the facets of a given simplex $\sigma$ are always processed either before $\sigma$, or at the same time by being paired by \ExpandMDM. Therefore, every time \ComputeG is called in Algorithm \ref{algo:ExpandMDM} to define some value $g(\sigma)$, and optionally $g(\tau)$ when $\sigma$ is paired with a facet $\tau$, we know the value $g(\alpha)$ has been defined beforehand for all facets in $A = \{\text{facets }\alpha<\sigma\ |\ \alpha\neq\tau\}$. Thus, \ComputeG always returns a value $w\in\R^\maxdim$ without error. More specifically, we can characterize $g$ as follows.

	\begin{restatable}{re-prop}{propComputeGworks}\label{prop:ComputeGworks}
		Consider function \ComputeG described in Algorithm \ref{algo:ComputeG}.
		\begin{enumerate}
			\item\label{enum:propComputeG1} When called in Algorithm \ref{algo:ExpandMDM}, \ComputeG returns a value $w\in\R^\maxdim$ without error, so the output dictionary $g$ of Algorithm \ref{algo:GenerateMDM} is such that $g(\sigma)$ is well defined for all $\sigma\in K$.
			\item\label{enum:propComputeG2} For all facets $\alpha$ of $\sigma\in K$, we have $g(\alpha)\preceq g(\sigma)$, where $g(\alpha) = g(\sigma)$ if and only if $\sigma$ and $\alpha$ were paired by \ExpandMDM.
			\item\label{enum:propComputeG3} For all $\sigma\in K$, we have
			\begin{align*}
				g_1(\sigma) &= f_1(\sigma) + m \delta \text{ for some } m\in\N,\\
				g_i(\sigma) &= f_i(\sigma) \text{ for each } i=2,...,\maxdim
			\end{align*}
			where $m\geq 0$ is bounded by the number of simplices that were processed before $\sigma$.
		\end{enumerate}
	\end{restatable}

	Now, we could show that the dictionary $\cV$ output by \GenerateMDM pairs together two simplices $\tau<\sigma$ if and only if $\sigma$ and $\tau$ belong in a same level set $L_u$ and $g(\tau)=g(\sigma)$. Otherwise, when $\tau<\sigma$ are not paired, we have $g(\tau)\precneqq g(\sigma)$. Since every simplex is processed exactly once, it belongs in at most one pair defined by $\cV$, and we can deduce that $g$ is in fact a \mdm function. Also, from part \ref{enum:propComputeG3} of Proposition \ref{prop:ComputeGworks}, we see that
	\begin{gather*}
		\lVert g(\sigma) - f(\sigma)\rVert = |g_1(\sigma) - f_1(\sigma)| = m\delta < |K|\delta \leq \epsilon
	\end{gather*}
	because $\delta$ is defined in Algorithm \ref{algo:GenerateMDM} so that $\delta\leq\frac{\epsilon}{|K|}$. Thus, we conclude that \GenerateMDM produces the desired result, as stated in the following theorem.

	\begin{restatable}{re-thm}{theogMDM}\label{theo:gMDM}
		For any simplicial complex $K$, multifiltering function $f$, admissible indexing map $I:K\rightarrow\R$ and $\epsilon>0$ given as input in \GenerateMDM, the outputs $g$ and $\cV$ correspond to a $f$-compatible \mdm function $g:K\rightarrow\R^\maxdim$ and its gradient field $\cV:K\nrightarrow K$ such that, for all $\sigma\in K$,
		\begin{gather*}
			\lVert g(\sigma) - f(\sigma)\rVert < \epsilon
		\end{gather*}
		where $\lVert\cdot\rVert$ may be any $p$-norm on $\R^\maxdim$.
	\end{restatable}

	Furthermore, we can see \GenerateMDM as an algorithm which outputs a \mdm function $g$ from an input admissible map $f$ by dividing its level sets into pairs and singletons. When $f$ is itself \mdm, the connected components of its level sets are all of cardinality $1$ or $2$, so \GenerateMDM simply returns $g:=f$.

	\begin{restatable}{re-prop}{propfMDMiffFisG}\label{prop:fMDMiffFisG}
		Let $g:K\rightarrow\R^\maxdim$ be the \mdm function produced by \GenerateMDM when given $f:K\rightarrow\R^\maxdim$ as input. The function $f$ is itself \mdm if and only if $f=g$.
	\end{restatable}

	\subsection{Complexity analysis}\label{sec:ComplexityAnalysis}

	To compute the complexity of each algorithm, we make the following assumptions:
	\begin{itemize}
		\item For each $\sigma\in K$, the set of its facets is stored in the structure implementing $K$, so it can be accessed in constant time.
		\item Inserting an entry, namely a key and its associated value, to a dictionary is done in constant time. Accessing an entry in a dictionary also requires constant time. These assumptions are generally verified when the dictionary is implemented with a suitable hash table \citep{Cormen2022}.
		\item Both \PQzero and \PQone are priority queues implemented so that removing the element $\sigma$ with minimal $I(\sigma)$ requires constant time, while inserting an element is logarithmic in the size of the queue.
		\item The vector function $f$ is given as input, thus precomputed. A value $f(\sigma)$ is accessed in constant time.
		\item The index mapping $I$ is precomputed. As explained previously at the beginning of Section \ref{sec:AlgoDescription}, the computation of $I$ can be included in the implementation of $K$.
	\end{itemize}

	In \GenerateMDM, there are three instructions which could potentially be carried out in nonconstant time: defining $\varepsilon$ at line \ref{algoLine:DefineDeltaF1NonConstant} of Algorithm \ref{algo:GenerateMDM}, calling \LevelSets at line \ref{algoLine:LevelSets} and calling \ExpandMDM for each $L_u$ at line \ref{algoLine:ExpandMDM}. In practice, since we are working with floating-point numbers, $\varepsilon$ may simply be defined as a small value used as a threshold to compare values of $f_1(K)\subset\R$, which is done in constant time. Then, to split $K$ into level sets, the function \LevelSets creates a dictionary which associates each $u\in f(K)$ to $L_u$. To construct such a dictionary, for each $\sigma\in K$, \LevelSets either adds $\sigma$ to $L_u$ if it is defined or creates an entry associating $u$ to $\{\sigma\}$ otherwise. In both cases, this takes constant time, so the dictionary is defined in $O(|K|)$ time. Since the keys $u\in f(K)$ of the dictionary then have to be sorted so that $u'$ comes before $u$ when $u'\precneqq u$, we deduce that calling \LevelSets has a computational cost of $O(|K| + |f(K)|\log |f(K)|)$.

	Moreover, the complexity of \GenerateMDM depends greatly on the execution of \ExpandMDM. For each level set $L_u$, we know from Section \ref{sec:Correctness} that each $\sigma\in L_u$ is added to each priority queue \PQzero and \PQone at most once and is processed exactly once. We compute the cost of processing each $\sigma\in L_u$ by considering each step of its process separately.
	\begin{enumerate}
		\item Inserting $\sigma$ in a priority queue takes logarithmic time in the size of the queue, which is bounded by $|L_u|$, so the computational cost of adding $\sigma$ to \PQzero and \PQone is at most $O(2\log |L_u|)$.
		\item If $\sigma$ is paired with a facet $\tau$, retrieving $\tau$ takes constant time.
		\item From Algorithm \ref{algo:ComputeG}, we see that to define $g(\sigma)$, in the worst case, \ComputeG needs to compute $\max\left(\{f_1(\sigma)\}\cup\{g_1(\alpha)\ |\ \alpha\in A\}\right)$, access $f_i(\sigma)$ for $i=2,...,\maxdim$ and compare the resulting vector value to $g(\alpha)$ for all $\alpha\in A$. Because $|A|\leq \dim\sigma+1 \leq d+1$, where $d:=\dim K$, each of these operations is done in $O(d\maxdim)$ time at most.
		\item Associating the key $\sigma$ to a value in the dictionaries $g$ and $\cV$ and declaring $\processed(\sigma)=\True$ requires constant time.
		\item The cost of \AddCofacets when applied to $\sigma$ depends on its number of cofacets, which may be quite large when $K$ is an arbitrary simplicial complex. Nonetheless, we know that each $\gamma\in L_u$ will be checked by \AddCofacets at most as many times as the number of facets of $\gamma$, which is bounded by $d+1$. Hence, we can approximate of the cost of \AddCofacets is $O(d)$ for each simplex in $L_u$.
	\end{enumerate}
	Thus, overall, for each $\sigma\in L_u$, \ExpandMDM has a computational cost of
	\begin{gather*}
		O\left(2\log|L_u| + d\maxdim + d\right) = O(\log|L_u| + d\maxdim).
	\end{gather*}
	Therefore, the function \ExpandMDM takes at most $O\left(|L_u|\log|L_u| + d\maxdim|L_u|\right)$ time to execute on a given level set $L_u$, and
	\begin{gather*}
		O\left(\sum_{u\in f(K)}\left(|L_u|\log|L_u| + d\maxdim|L_u|\right)\right)
	\end{gather*}
	time to process all level sets. Also, we see that $\sum_{u\in f(K)} d\maxdim|L_u| = d\maxdim|K|$ because the level sets $L_u$ form a partition of $K$. Moreover, let $\lambda:=\max_{u\in f(K)}|L_u|$ be the size of the largest level set in $K$. Then, $\sum_{u\in f(K)}|L_u|\log|L_u|\leq |f(K)|\lambda\log\lambda = |f(K)|\log\lambda^\lambda$, and it follows that the computational cost of \ExpandMDM may be overestimated by
	\begin{gather*}
		O\left(|f(K)|\log\lambda^\lambda + d\maxdim|K|\right).
	\end{gather*}

	Finally, by adding the costs of \LevelSets and \ExpandMDM, we obtain a total cost for \GenerateMDM of $O\left(|K| + |f(K)|\log |f(K)| + |f(K)|\log\lambda^\lambda + d\maxdim|K|\right)$, which can be rewritten as
	\begin{gather*}
		O\left(d\maxdim|K| + |f(K)|\log\lambda^\lambda|f(K)|\right).
	\end{gather*}
	Since $\lambda^\lambda|f(K)|\leq \left(\lambda|f(K)|\right)^\lambda$, we conclude with the following result.

	\begin{prop}\label{prop:ComputationalCost}
		Let $f:K\rightarrow\R^\maxdim$ be an admissible input map with $d:= \dim K$ and $\lambda:=\max_{u\in f(K)}|L_u|$. The computational complexity of \GenerateMDM is
		\begin{gather*}
			O\left(d\maxdim|K| + \lambda|f(K)|\log\lambda|f(K)|\right).
		\end{gather*}
	\end{prop}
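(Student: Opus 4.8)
The plan is to prove the bound by summing the costs of the three phases of \GenerateMDM that are not manifestly constant-time: the computation of the parameter $\delta$ (together with $\varepsilon$), the single call to \LevelSets, and the successive calls to \ExpandMDM, one per level set. Throughout I would lean on the five complexity assumptions stated above, in particular constant-time access to facets and to the precomputed values of $f$ and $I$, constant-time dictionary operations, and a priority-queue implementation with constant-time extraction of the minimum and logarithmic insertion. For the first phase, I would argue that under the floating-point model $\varepsilon$ is realized as a fixed comparison threshold, so both branches defining $\delta$ in Algorithm \ref{algo:GenerateMDM} run in constant time. For \LevelSets, building the dictionary that sends each $u\in f(K)$ to its level set $L_u$ visits every simplex once at constant cost, which is $O(|K|)$, and the subsequent lexicographic sorting of the $|f(K)|$ keys costs $O(|f(K)|\log|f(K)|)$; hence this phase is $O(|K| + |f(K)|\log|f(K)|)$.

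The heart of the argument is the cost of \ExpandMDM on a single level set $L_u$. The key fact I would invoke --- and which I expect to be the main obstacle, since it is really a termination/correctness statement rather than a routine count --- is that each $\sigma\in L_u$ is inserted into each of \PQzero and \PQone at most once and is processed (as a pair or as critical) exactly once. I would defer the proof of this invariant to Section \ref{sec:Correctness}, where the behaviour of the \processed flag and of \NumUnprocFacets is analyzed. Granting it, the total work of \ExpandMDM is the sum over the simplices of $L_u$ of their individual processing costs, which I would bound term by term: the at most two queue insertions cost $O(\log|L_u|)$; retrieving a paired facet is $O(1)$; one call to \ComputeG costs $O(d\maxdim)$, since it takes a maximum over the at most $d+1$ facets in $A$, reads the remaining $\maxdim-1$ coordinates of $f(\sigma)$, and performs at most $|A|$ vector comparisons; the dictionary updates to $g$, $\cV$ and \processed are $O(1)$; and \AddCofacets contributes $O(d)$ amortized per simplex, because each cofacet $\beta$ is examined at most once per facet and has at most $d+1$ facets. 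Summing these gives a per-simplex cost of $O(\log|L_u| + d\maxdim)$ and hence $O(|L_u|\log|L_u| + d\maxdim|L_u|)$ for the whole level set.

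It then remains to aggregate over all level sets and simplify. Because the $L_u$ partition $K$, I have $\sum_{u\in f(K)} d\maxdim|L_u| = d\maxdim|K|$, while bounding each $|L_u|$ by $\lambda$ gives $\sum_{u\in f(K)}|L_u|\log|L_u|\leq |f(K)|\lambda\log\lambda = |f(K)|\log\lambda^\lambda$. Adding the cost of \LevelSets and absorbing $|K|$ into $d\maxdim|K|$ yields a total of
\[
O\!\left(d\maxdim|K| + |f(K)|\log\!\left(\lambda^\lambda|f(K)|\right)\right),
\]
after combining $|f(K)|\log|f(K)|$ with $|f(K)|\log\lambda^\lambda$. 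The final cosmetic step is the inequality $\lambda^\lambda|f(K)|\leq(\lambda|f(K)|)^\lambda$, whence $\log(\lambda^\lambda|f(K)|)\leq\lambda\log(\lambda|f(K)|)$, and the bound collapses to the claimed $O(d\maxdim|K| + \lambda|f(K)|\log\lambda|f(K)|)$.
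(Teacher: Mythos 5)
Your proposal is correct and follows essentially the same route as the paper's own analysis: the same three-phase decomposition, the same per-simplex cost accounting inside \ExpandMDM (with the "each simplex enters each queue at most once and is processed exactly once" invariant deferred to the correctness section), and the same aggregation and simplification via $\sum_u|L_u|\log|L_u|\leq|f(K)|\log\lambda^\lambda$ and $\lambda^\lambda|f(K)|\leq(\lambda|f(K)|)^\lambda$. Nothing to add.
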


	It is possible to refine the result by making extra assumptions. For instance, if we assume that $d$ and $\maxdim$ are small constants, we find the algorithm executes in $O\left(|K| + \lambda|f(K)|\log\lambda|f(K)|\right)$ time. Furthermore, in most applications, the considered input functions induce many small level sets, so $|f(K)|$ is generally a great value while $\lambda$ is quite small. Considering this, we can approximate the running time of \GenerateMDM by $O\left(|K| + |f(K)|\log|f(K)|\right)$.

	\subsection{Comparison with previous algorithms}\label{sec:PreviousAlgorithms}

	The main ideas of \GenerateMDM and \ExpandMDM essentially come from the algorithms \ComputeDiscreteGradient and \Matching proposed by \citet{Scaramuccia2020} and \citet{Allili2019} respectively. Recall that they use as input a simplicial complex $K$ and a component-wise injective vertex map $f:K_0\rightarrow\R^\maxdim$ and output a discrete gradient field compatible with the max-extension of $f$. Hence, in comparison with these algorithms, \GenerateMDM has three main advantages:
	\begin{enumerate}
		\item In addition to a discrete gradient field, it computes a compatible \mdm function.
		\item It does not require the input map $f$ or any of its components to be injective.
		\item It can process not only the max-extension of a vertex map, but also any multifiltering function defined on $K$. This is convenient, notably, when interested in filtering $K$ using a radius map (see end of Section \ref{sec:Multipersistence} for more details).
	\end{enumerate}
	It is also worth noting that \GenerateMDM and \ExpandMDM could easily be adapted to generate only either a \mdm function or a discrete gradient field if needed, making the proposed approach quite versatile.

	Furthermore, since an arbitrary vertex map $f$ may always be perturbed slightly to be made component-wise injective, relaxing the injectivity hypothesis on $f$ may seem like a negligible gain over the previous algorithms. Nonetheless, there are at least two reasons that make this improvement worthwhile.

	\begin{figure}[ht]
		\centering
		\subcaptionbox{\label{figSub:OutputCompareGenerateMDM}}[0.3\textwidth]{
			\centering
			\begin{tikzpicture}[scale=1.5]
				\coordinate (A) at (0,0);
				\coordinate (B) at (1,0);
				\coordinate (C) at (2,0);

				\coordinate (AB) at ($0.5*(A)+0.5*(B)$);
				\coordinate (BC) at ($0.5*(B)+0.5*(C)$);

				\draw[thick] (A) -- (B) -- (C);

				\node[red] at (A){$\bullet$};
				\node at (B){$\bullet$};
				\node at (C){$\bullet$};

				\draw[ultra thick, -latex] (B) -- (AB);
				\draw[ultra thick, -latex] (C) -- (BC);

				\node[below] at (A) {$0$};
				\node[below] at (B) {$0$};
				\node[below] at (C) {$0$};
			\end{tikzpicture}
		}
		\subcaptionbox{\label{figSub:OutputCompareMatchingOK}}[0.3\textwidth]{
			\centering
			\begin{tikzpicture}[scale=1.5]
				\coordinate (A) at (0,0);
				\coordinate (B) at (1,0.25);
				\coordinate (C) at (2,0.5);

				\coordinate (AB) at ($0.5*(A)+0.5*(B)$);
				\coordinate (BC) at ($0.5*(B)+0.5*(C)$);

				\draw[thick] (A) -- (B) -- (C);

				\node[red] at (A){$\bullet$};
				\node at (B){$\bullet$};
				\node at (C){$\bullet$};

				\draw[ultra thick, -latex] (B) -- (AB);
				\draw[ultra thick, -latex] (C) -- (BC);

				\node[below] at (A) {$0$};
				\node[below] at (B) {$\epsilon$};
				\node[below] at (C) {$2\epsilon$};
			\end{tikzpicture}
		}
		\subcaptionbox{\label{figSub:OutputCompareMatchingBad}}[0.3\textwidth]{
			\centering
			\begin{tikzpicture}[scale=1.5]
				\coordinate (A) at (0,0);
				\coordinate (B) at (1,0.5);
				\coordinate (C) at (2,0.25);

				\coordinate (AB) at ($0.5*(A)+0.5*(B)$);
				\coordinate (BC) at ($0.5*(B)+0.5*(C)$);

				\draw[thick] (A) -- (B);
				\draw[red,very thick] (B) -- (C);

				\node[red] at (A){$\bullet$};
				\node at (B){$\bullet$};
				\node[red] at (C){$\bullet$};

				\draw[ultra thick, -latex] (B) -- (AB);

				\node[below] at (A) {$0$};
				\node[above] at (B) {$2\epsilon$};
				\node[below] at (C) {$\epsilon$};
			\end{tikzpicture}
		}
		\caption{In \subref{figSub:OutputCompareGenerateMDM}, the input complex $K$ and map $f:K_0\rightarrow\R$ and the gradient field output by \GenerateMDM, assuming the leftmost vertex $v$ has minimal value $I(v)$. In \subref{figSub:OutputCompareMatchingOK} and \subref{figSub:OutputCompareMatchingBad}, two different outputs of \Matching \citep{Allili2019} and \ComputeDiscreteGradient \citep{Scaramuccia2020}, depending on the way $f$ is perturbed before given as input.}
		\label{fig:OutputCompare}
	\end{figure}

	First, perturbing $f$ before computing a compatible discrete gradient may induce some spurious critical points, as shown in Figure \ref{fig:OutputCompare}. For this particular example, we could verify that for both modified input maps in Figures \ref{figSub:OutputCompareMatchingOK} and \ref{figSub:OutputCompareMatchingBad}, the output does not depend on the chosen indexing map for both algorithms \Matching and \ComputeDiscreteGradient. However, there are a few different indexing maps admissible for \GenerateMDM, meaning that the output in Figure \ref{figSub:OutputCompareGenerateMDM} is not unique. Nonetheless, in this example, any indexing map $I$ given as input produces a gradient field with only one critical point, which is the vertex $v$ with minimal value $I(v)$.

	Second, being able to process a map $f$ that is not injective means that the algorithm may also be used to generate an arbitrary \mdm function and its gradient field on a given complex. Indeed, if we consider $f$ to be a constant map, such as the one in Figure \ref{figSub:OutputCompareGenerateMDM}, then there exists a unique level set $L_u$ in $K$, which is the whole complex. Thus, \GenerateMDM calls \ExpandMDM only once and it processes the whole complex $K = L_u$ at one time. Also, we see that in this specific case, $|f(K)|=1$ and $\lambda := \max_{u\in f(K)}|L_u| = |K|$. Hence, from Proposition \ref{prop:ComputationalCost} in Section \ref{sec:ComplexityAnalysis}, we have that \GenerateMDM takes at most $O(d\maxdim |K| + |K|\log|K|)$ time, or simply $O(|K|\log|K|)$ time if we assume $d$ and $\maxdim$ to be small.

	Experimentally, we see that a \mdm function generated this way is often perfect, meaning that its number of critical simplices of index $p$ equals the $p^\text{th}$ Betti number of $K$ (see Section \ref{sec:ExperimentsOptimality}). However, for large datasets, \GenerateMDM does not always minimize the number of critical simplices of the output, which is to be expected since computing an optimal discrete gradient field on general simplicial complexes is $\mathcal{NP}$-hard \citep{Joswig2006, Lewiner2003}. Still, the algorithms designed to find an optimal gradient field require either $\dim K\leq 2$ \citep{Lewiner2003} or $K$ to have a manifold structure \citep{Bauer2012}, while heuristics which aims to do this for general simplicial complexes run in $O(|K|^2)$ time \citep{Lewiner2003a}. This suggests that in addition to being well adapted to the context of multipersistent homology, our approach is efficient for generating an arbitrary discrete gradient field with an almost minimal number of critical simplices on a general simplicial complex.

	That said, previous algorithms that generate discrete gradient fields outperform \GenerateMDM in terms of speed. In particular, \ComputeDiscreteGradient computes a gradient field compatible with the max-extension of a component-wise injective vertex map in $O(|K| + |K_0|\log|K_0|)$ time, assuming $\dim K$ is low and the star of each vertex is small. However, under the same hypotheses and assuming $\maxdim$ is also small, \GenerateMDM takes $O(|K| + |f(K)|\log|f(K)|)$ time where $|K_0|\leq |f(K)| \leq |K|$. Hence, at best, it can be as efficient as the procedure \ComputeDiscreteGradient, but it is slower when the number $|f(K)|$ of level sets of $f$ is great. Nonetheless, as mentioned above, function \GenerateMDM proves itself to be advantageous on many other levels.

	\section{Geometric interpretation of the critical simplices}\label{sec:Pareto}

	In recent papers \citep{AssifPK2021, Budney2023, Cerri2019}, it is made clear that there is a strong connection between the homological changes in the bifiltration induced by a smooth $\R^2$-valued map and the Pareto set (as defined in \citep{Smale1975, Wan1975}) of that map. Moreover, we will see from experimental results (see Section \ref{sec:ExperimentalResults}) that for a given admissible function, \GenerateMDM produces critical simplices in clusters, which are very similar to Pareto sets of analogous smooth maps. This observation was also made for algorithms \Matching \citep{Allili2019} and \ComputeDiscreteGradient \citep{Scaramuccia2018}.

	In her PhD thesis, \citet{Scaramuccia2018} gives many insights into the different definitions of Pareto sets found in the literature and highlights the issues encountered when trying to define an appropriate variant in the discrete setting. Using concepts of combinatorial dynamics \citep{Lipinski2023, Mrozek2017}, we propose here a new approach to characterize the critical simplices for a multifiltering function and see how they relate to the concept of Pareto set for a vector-valued smooth map and to the critical simplices output by \GenerateMDM.

	\subsection{Defining critical simplices for a multifiltering function}

	Consider a combinatorial gradient field $\cV$ compatible with a multifiltering function $f:K\rightarrow\R^\maxdim$. As stated in Section \ref{sec:MorseIneqRelativePerfect}, we know that for any $u\in f(K)$, the number $m_p(u)$ of critical $p$-simplices of $\cV$ in $L_u$ is at least $\rank H_p(K(u), \bigcup_{u'\precneqq u}K(u'))$. This expression can be rewritten in terms of $L_u$. Indeed, notice that $\bigcup_{u'\precneqq u}K(u')$ is simply $K(u)\backslash L_u$. Then, using the excision theorem (see \citep{Edelsbrunner2010}) to excise the set $K(u)\backslash\Cl L_u$, we see that
	\begin{gather*}
		\Hom\left(K(u), \bigcup_{u'\precneqq u}K(u')\right) = \Hom\left(K(u), K(u)\backslash L_u\right) \cong \Hom\big(\Cl L_u, \Ex L_u\big),
	\end{gather*}
	which leads to the next proposition.

	\begin{prop}\label{prop:ParetoLevelSetContainCritSimplex}
		Let $\cV$ be an acyclic discrete vector field compatible with a multifiltering function $f:K\rightarrow\R^\maxdim$. For all $u\in f(K)$, if $\Hom\big(\Cl L_u, \Ex L_u\big)$ is nonzero, then $\cV$ has a critical simplex in $L_u$. Conversely, if $\cV$ is perfect relatively to $f$ and has a critical simplex in $L_u$, then $\Hom\big(\Cl L_u, \Ex L_u\big)$ is nonzero.
	\end{prop}

	This motivates the following definition.

	\begin{defn}
		For a multifiltering function $f:K\rightarrow\R^\maxdim$, we say that $u\in f(K)$ is a \emph{Pareto critical value} of $f$ if the relative homology $\Hom(\Cl L_u, \Ex L_u)$ is nonzero. Similarly, $\sigma\in K$ is \emph{Pareto critical} for $f$ if it belongs in a connected component $C$ of $L_{f(\sigma)}$ such that $\Hom(\Cl C, \Ex C)$ is nonzero. The Pareto critical simplices of $f$ constitute the \emph{Pareto set} of $f$, noted $\Pareto{f}$.
	\end{defn}

	\begin{rem} When $f$ is the max-extension of a component-wise injective vertex map, each level set $L_u$ is connected. Thus, when assuming this hypothesis, we have that $\sigma\in K$ is Pareto critical iff $f(\sigma)$ is a Pareto critical value. \end{rem}

	Since the gradient field of a \mdm function is a particular case of acyclic discrete vector field, we obtain the following result directly from Proposition \ref{prop:ParetoLevelSetContainCritSimplex}.

	\begin{cor}\label{coro:CriticalImpliesPareto}
		Let $g:K\rightarrow\R^\maxdim$ be a \mdm function compatible with a multifiltering function $f:K\rightarrow\R^\maxdim$. For every Pareto critical value $u$ of $f$, the \mdm function $g$ necessarily has a critical simplex in $L_u$. Conversely, when $g$ is perfect relatively to $f$, for every critical simplex $\sigma$ of $g$, we have that $\sigma\in\Pareto{f}$.
	\end{cor}

	\begin{rem}
		Notice that, when $g$ is not relative-perfect, we may not assume that all critical simplices of $g$ are Pareto for $f$. Indeed, we may find a counterexample by letting $f$ be such that $\Pareto{f} \neq K$ and $g$ be any \mdm function for which all simplices are critical. In that case, $g$ is trivially $f$-compatible, but there exists critical simplices of $g$ which are not Pareto for $f$. That being said, the relative-perfectness hypothesis is not necessary for every critical simplex of $g$ to be in $\Pareto{f}$. For instance, if $\Pareto{f} = K$, which is the case notably when $f=0$, then all critical simplices of any $f$-compatible \mdm function $g$ trivially belong in $\Pareto{f}$, whether or not $g$ is relative-perfect.
	\end{rem}

	In the original setting \citep{Smale1975, Wan1975}, a Pareto set is defined using concepts of differential topology, so our proposed definition may seem somewhat unrelated. Nonetheless, as mentioned earlier, in the smooth setting, it was shown that the Pareto set of a smooth vector-valued map is directly linked to the homological changes in the multifiltration induced by this map \citep{AssifPK2021, Budney2023, Cerri2019}. Moreover, it is rather common to use homology to characterize singularities in other discrete settings. Notably, it is done in PL Morse theory in order to define both critical points of real-valued functions \citep{Edelsbrunner2010, Knudson2015, Lewiner2013} and critical (Jacobi) sets of vector-valued mappings \citep{Edelsbrunner2008a}.

	Our definition is also inspired from the theory of combinatorial multivector fields, which provides discrete analogues to many concepts from smooth dynamics \citep{Lipinski2023}. Indeed, consider a multifiltering function $f:K\rightarrow\R^\maxdim$ and some $u\in f(K)$. For any two simplices $\sigma\leq\sigma'$, if $f(\sigma)=f(\sigma')=u$, then for all $\tau\in K$ such that $\sigma\leq\tau\leq\sigma'$, we have $f(\tau) = u$ because $f(\sigma)\preceq f(\tau)\preceq f(\sigma')$ by the definition of a multifiltering function. Hence, $\sigma,\sigma'\in L_u$ implies $\tau\in L_u$ whenever $\sigma\leq\tau\leq\sigma'$, meaning that $L_u$ is convex with respect to the face relation in $K$. In other words, each $L_u$ is a combinatorial multivector in the sense of \citep{Lipinski2023}. Therefore, the partition of $K$ in level sets is a well-defined combinatorial multivector field for which each $L_u$ is a multivector. Finally, in \citep{Lipinski2023}, a multivector $L_u$ is defined as critical  when $\Hom(\Cl L_u, \Ex L_u)$ is nonzero, so our proposed definition of a Pareto critical value agrees with the combinatorial multivector field theory.

	\begin{figure}[ht]
		\centering
		\subcaptionbox{\label{figSub:ParetoRegularNeighbourhood}}[0.3\textwidth]{
			\centering
			\includegraphics[width=\linewidth]{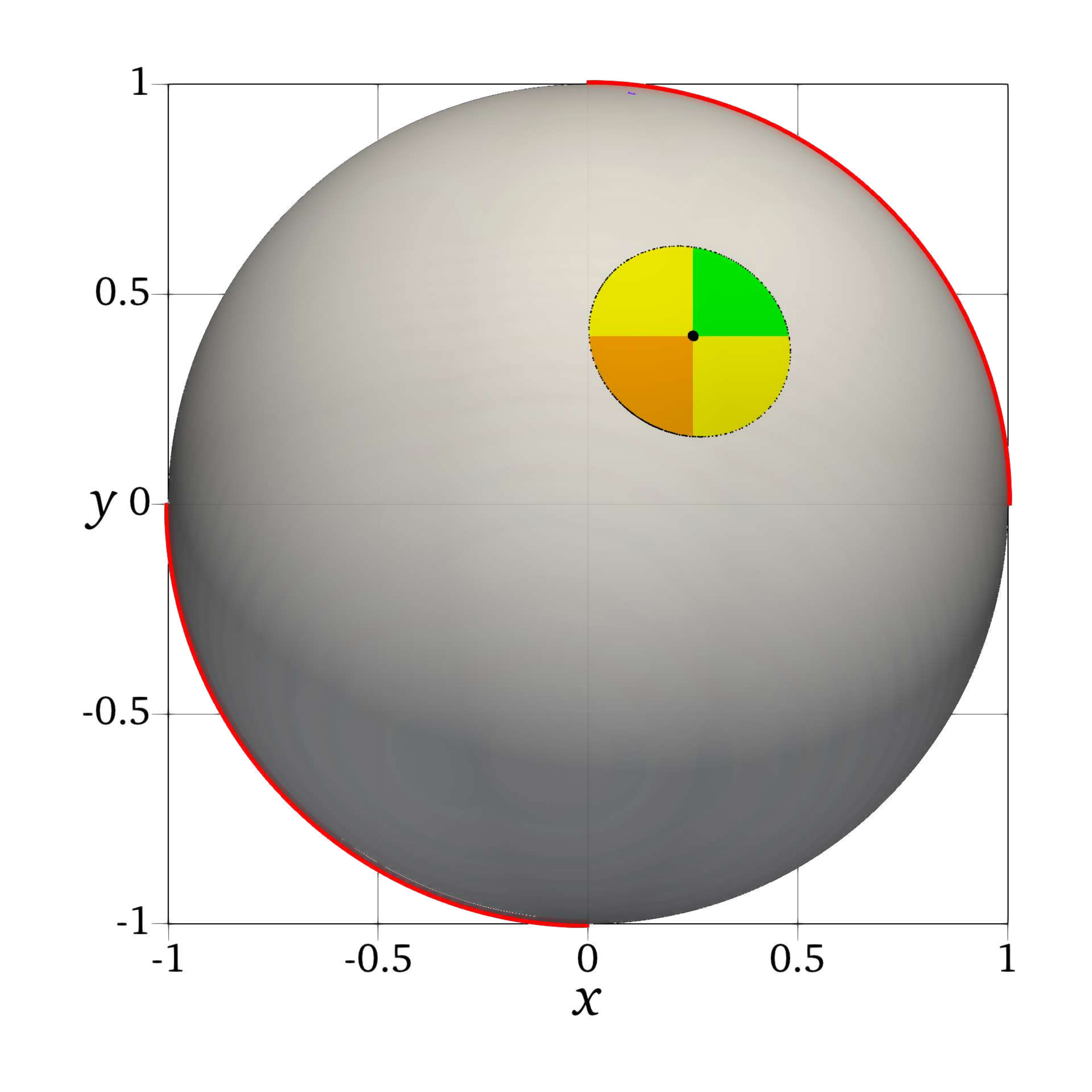}
		}
		\subcaptionbox{\label{figSub:ParetoMinNeighbourhood}}[0.3\textwidth]{
			\centering
			\includegraphics[width=\linewidth]{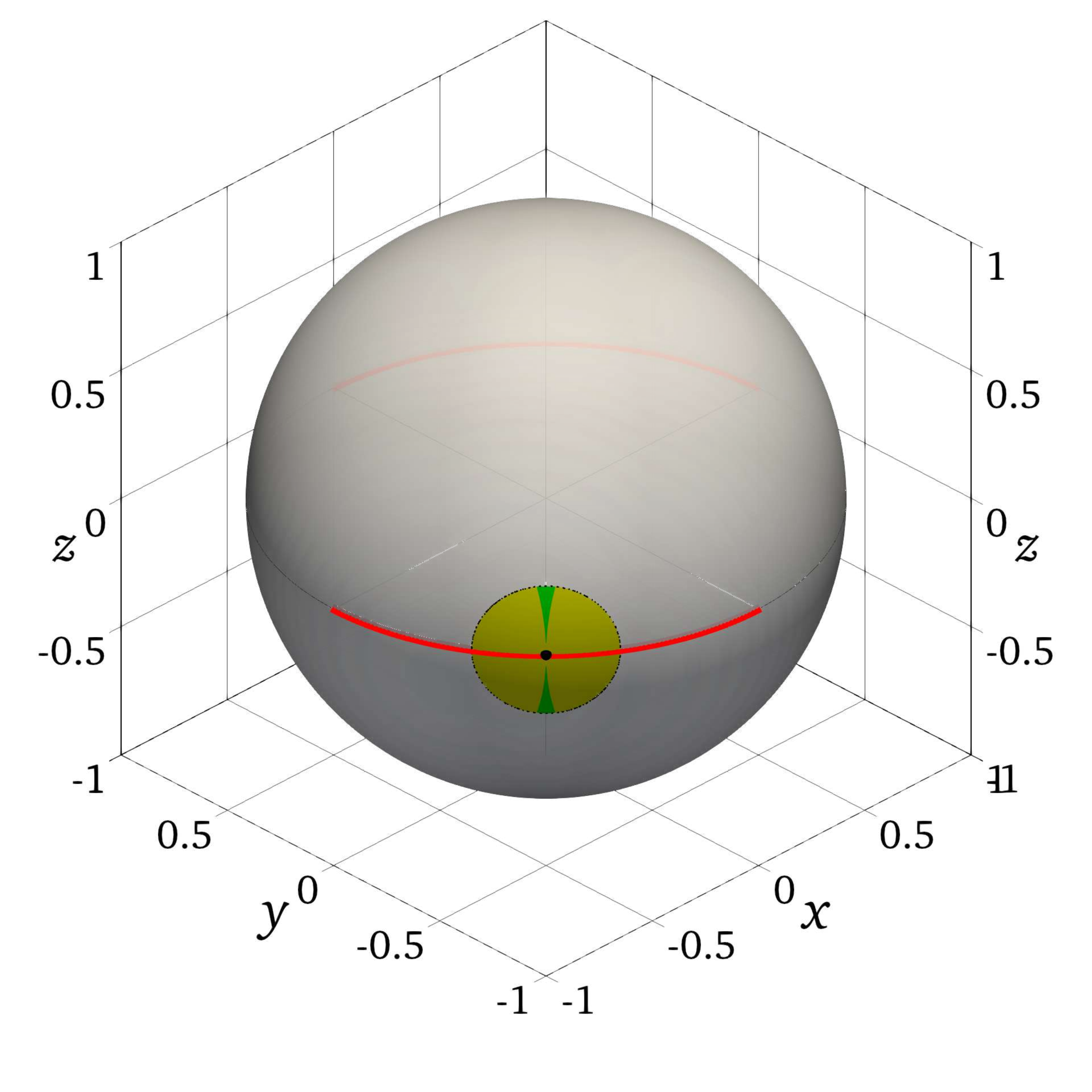}
		}
		\subcaptionbox{\label{figSub:ParetoMaxNeighbourhood}}[0.3\textwidth]{
			\centering
			\includegraphics[width=\linewidth]{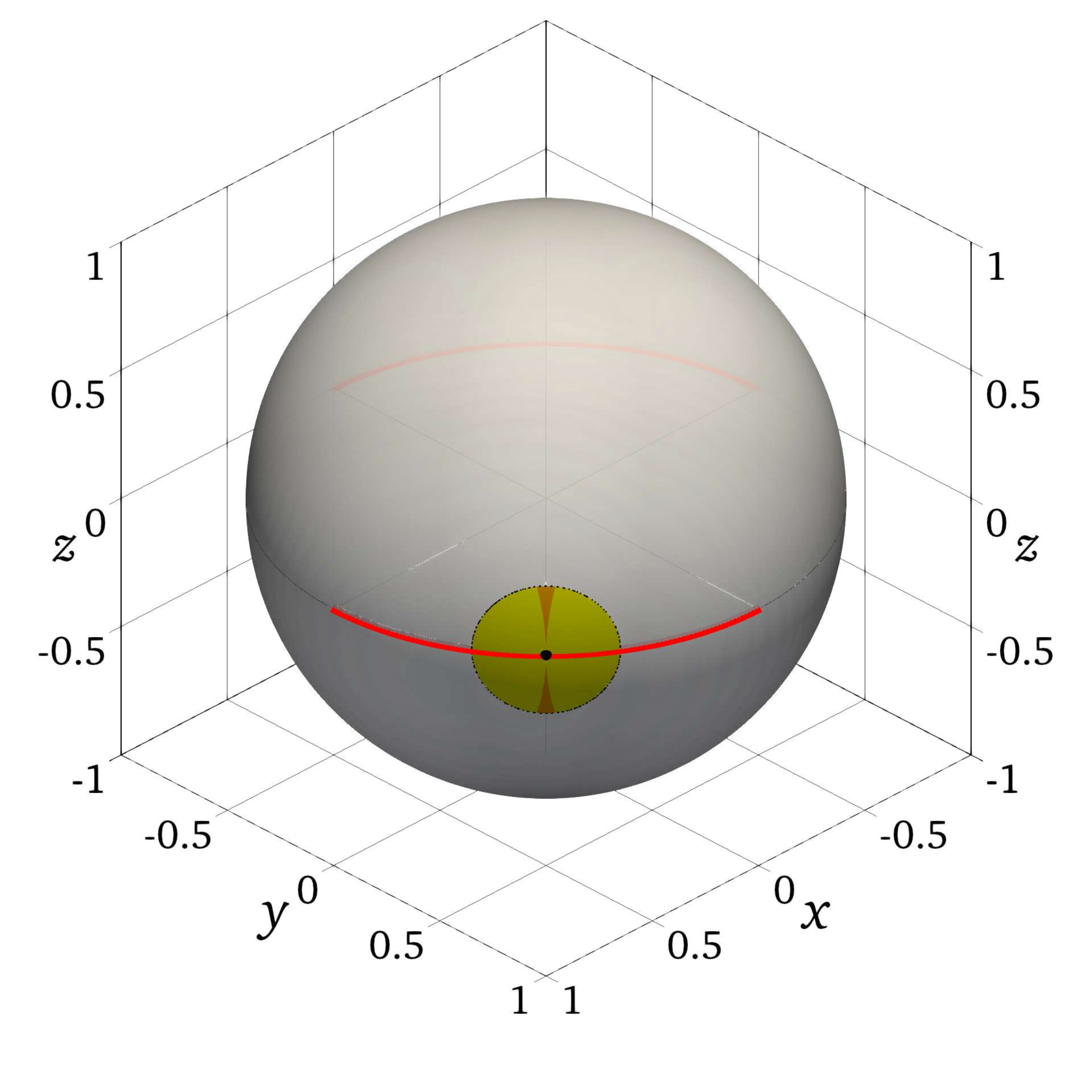}
		}
		\caption{In each figure is shown a neighbourhood $U$ of some point $p$ on the sphere $S^2$. For $f:S^2\rightarrow\R^2$ which maps each point to its coordinates $(x,y)\in\R^2$, the Pareto set of $f$ consists of the two red arcs, while points $q\in U$ such that $f(q)\preceq f(p)$ are orange, those for which $f(p)\preceq f(q)$ are green and all others are yellow. In \subref{figSub:ParetoRegularNeighbourhood}, the considered point $p$ is regular. In \subref{figSub:ParetoMinNeighbourhood}, $p$ is a Pareto minimum and in \subref{figSub:ParetoMaxNeighbourhood}, it is a Pareto maximum.}
		\label{fig:ParetoNeighbourhoods}
	\end{figure}

	To further justify the proposed definition of Pareto singularity, we present concisely the idea of Pareto points for smooth maps and give an interpretation which is common to both discrete and smooth settings. The reader is referred to \citep{Lee2012} for more details regarding the basic concepts of differential geometry. Consider a smooth map $f=(f_1,...,f_\maxdim):M\rightarrow\R^\maxdim$ on a manifold $M$ and let $p\in M$. We say $p$ is in the Pareto set $\theta$ of $f$ if there exists no direction in which the gradient vectors of all components $f_i$ at $p$ agree. In other words, away from $\theta$, all $f_i$ increase in some common direction. For all $p\notin \theta$ and for some small enough neighbourhood $U$ of $p$, we could show that this common direction may be represented by the subset $\{q\in U\ |\ f_i(q) > f_i(p)\text{ for each }i=1,...,\maxdim\}$, which is homeomorphic to the intersection of a convex cone with some small neighbourhood, so it is a contractible subset of $U$. An example of such a subset is represented in green in Figure \ref{figSub:ParetoRegularNeighbourhood}. Generally, this is not true for points $p\in \theta$. For example, in Figure \ref{figSub:ParetoMinNeighbourhood}, we see that the subset $\{q\in U\ |\ f_i(q) > f_i(p)\text{ for each }i=1,...,\maxdim\}$ has two distinct components while, in Figure \ref{figSub:ParetoMaxNeighbourhood}, it is empty.

	This brings us back to the idea behind the proposed definition of a Pareto critical point of a discrete multifiltering function $f:K\rightarrow\R^\maxdim$. Let $\sigma\in K$ such that $f(\sigma)=u$. As explained in \citep{Lipinski2023}, the level set $L_u$ may be seen as a black box inside of which we know nothing of the dynamics. Instead, to understand the behaviour of $f$ around $\sigma$, we have to look at the boundary of $L_u$. Notice that $\Ex L_u = \Cl L_u\backslash L_u\subseteq\Cl L_u\backslash \Int L_u = \Bd L_u$, so the exit set $\Ex L_u$ is part of the boundary of $L_u$, and from the definition of a multifiltering function, we could show that $\Ex L_u = \{\tau\in \Cl L_u\ |\ f(\tau)\precneqq f(\sigma)\}$. Hence, $\Ex L_u$ corresponds to the part of the domain neighbouring $\sigma$ for which the components of $f$ decrease. Thus, when the relative homology of $\Cl L_u$ with respect to $\Ex L_u$ is trivial, we interpret that all components of $f$ behave similarly and agree in some common direction around $\sigma$, represented by $\Ex L_u$.

	\subsection{Pareto primary simplices}

	We end this section by discussing the particular case where $f:K\rightarrow\R^\maxdim$ is the max-extension of a component-wise injective map defined on the set $K_0$ of vertices of $K$. Under these conditions, it was shown in \citep{Allili2019} that there exists a subset of simplices $S\subseteq K$, called \emph{primary} simplices, such the lower stars $\Low_f(\sigma) = \{\tau\geq\sigma\ |\ f(\tau) \preceq f(\sigma)\}$ of primary simplices $\sigma\in S$ partition $K$. Also, it was noted in \citep{Scaramuccia2020} that these lower stars correspond to the level sets $L_u$ of $f$. Indeed, since $f$ is component-wise injective on $K_0$ and $f_i(\tau) = \max_{v\in\tau} f_i(v)$ for all $\tau\in K$, then for every $u=(u_1,...,u_\maxdim)\in f(K)$ and each $i=1,...,\maxdim$, there exists a unique vertex $v_i\in K_0$ such that $f_i(v_i) = u_i$. We could prove that $v_1, v_2,...,v_\maxdim$ are exactly the vertices of $\sigma = \bigcap_{\tau\in L_u}\tau$ (thus $\dim\sigma\leq\maxdim-1$) and that $\sigma$ is the unique simplex in $K$ such that $\Low_f(\sigma) = L_u$.

	In other words, under the previous assumptions, we can identify each level set $L_u$ of $f$ to a unique primary simplex $\sigma\in L_u$. Although this statement is not always verified for an arbitrary multifiltering function, when they exist, primary simplices allow for a better understanding of the multifiltration induced by $f$.

	\begin{defn}
		For a multifiltering function $f:K\rightarrow\R^\maxdim$ and $u\in f(K)$, if there exists a simplex $\sigma\in L_u$ such that $\Low_f(\sigma) = L_u$, then we say $\sigma$ is the \emph{primary simplex of $L_u$}. If $u$ is Pareto critical, we say $\sigma$ is a \emph{primary Pareto critical simplex} for $f$.
	\end{defn}

	Moreover, as seen in Figure \ref{figSub:SphereIndexZPareto} in Section \ref{sec:ExperimentalResults}, the set of primary Pareto critical simplices of a multifiltering function $f$ is visually very similar to the Pareto set of its analogous smooth map when the domain $K$ of $f$ is a well-chosen triangulation. See Section \ref{sec:ExperimentsTriangulationIndexingMap} for more details on the effect that the chosen triangulation may have on the multifiltration of the space.

	\section{Experimental results}\label{sec:ExperimentalResults}

	We now test the algorithms presented in Section \ref{sec:Algorithms} on various datasets. More specifically, we are interested in the number of critical simplices \GenerateMDM produces, the different ways to agglomerate them in critical components and the impacts of the triangulation and indexing map on the output \mdm function.

	To implement our algorithms and the input simplicial complexes, we use the simplex tree structure of the GUDHI library \citep{GUDHI}. After generating a \mdm function, the simplicial complex on which it is defined and all relevant data is exported to a VTK file \citep{VTK} to be opened in a scientific visualization application. Here, we use ParaView \citep{ParaView} to generate our images.

	\subsection{Space complexity reduction}\label{sec:ExperimentsOptimality}

	As shown in \citep{Allili2017}, the multipersistent homology of a simplicial complex can be computed using only the critical simplices of a gradient field defined on it. Hence, \GenerateMDM could be used as a preprocessing tool in the computation of multipersistent homology and, to be as efficient as possible, it should produce as few critical simplices as possible.

	Thus, we compare the numbers of critical simplices output by \GenerateMDM with those of algorithms in \citep{Allili2017, Allili2019, Cerri2011}. Note that the algorithm presented in \citep{Cerri2011} does not compute a gradient field, but instead reduces the number of vertices and edges of the input dataset for while preserving the multipersistent homology of the space. Also, since \ComputeDiscreteGradient \citep{Scaramuccia2020} produces an output equivalent to that of \Matching, we do not compare \GenerateMDM to \ComputeDiscreteGradient.

	For each algorithm, we consider four different triangular meshes, available online in the GNU Triangulated Surface (GTS) Library \citep{GTS}. As input admissible map, we use the max-extension of $f:K_0\rightarrow \R^2$ such that $f(v) = (|x|, |y|)$ for each vertex $v$ of coordinates $(x,y,z)$. Also, the index mapping described at the beginning of Section \ref{sec:AlgoDescription} is used in \GenerateMDM.

	\begin{table}
		\caption{Proportion of critical simplices output by Algorithm \ref{algo:GenerateMDM} and algorithms in \citep{Allili2017, Allili2019, Cerri2011} given the vertex map $v\mapsto (|x|, |y|)$ and different datasets}\label{table:MatchingComparison}
		\begin{tabular}{l||ll|rrrr}
			\multirow{2}{*}{Dataset} & \multicolumn{2}{c|}{Simplices} & \multicolumn{4}{c}{Critical simplices (\%)} \\ \cline{2-7}
			& Type & Number & Alg. \ref{algo:GenerateMDM} & \citep{Allili2019} & \citep{Allili2017} & \citep{Cerri2011} \\ \hline\hline
			\multirow{4}{*}{\texttt{tie}} & Vertices & 2014 & 8.6 & 27.5 & 11.3 & 29.2 \\
			& Edges & 5944 & 7.6 & 20.1 & 56.2 & 13.9 \\
			& Triangles & 3827 & 4.1 & 14.1 & 78.7 & -\hspace*{1.5ex} \\
			& Total & 11785 & 6.7 & 19.4 & 55.9 & -\hspace*{1.5ex} \\ \hline
			\multirow{4}{*}{\texttt{space\_shuttle}} & Vertices & 2376 & 4.1 & 9.5 & 5.1 & 11.0 \\
			& Edges & 6330 & 1.6 & 3.8 & 58.4 & 5.2 \\
			& Triangles & 3952 & 0.1 & 0.4 & 90.5 & -\hspace*{1.5ex} \\
			& Total & 12658 & 1.6 & 3.8 & 58.4 & -\hspace*{1.5ex} \\ \hline
			\multirow{4}{*}{\texttt{x\_wing}} & Vertices & 3099 & 4.9 & 19.8 & 5.6 & 18.4 \\
			& Edges & 9190 & 3.5 & 13.4 & 39.2 & 9.2 \\
			& Triangles & 6076 & 2.5 & 9.9 & 56.2 & -\hspace*{1.5ex} \\
			& Total & 18365 & 3.4 & 13.3 & 39.2 & -\hspace*{1.5ex} \\ \hline
			\multirow{4}{*}{\texttt{space\_station}} & Vertices & 5749 & 29.4 & 30.8 & 32.7 & 33.7 \\
			& Edges & 15949 & 15.1 & 16.0 & 70.0 & 17.4 \\
			& Triangles & 10237 & 7.3 & 8.0 & 91.0 & -\hspace*{1.5ex} \\
			& Total & 31935 & 15.2 & 16.1 & 70.0 & -\hspace*{1.5ex} \\
		\end{tabular}
	\end{table}

	The results are presented in Table \ref{table:MatchingComparison} where, for each dataset, the number of simplices of each dimension is shown and the proportion of critical simplices output by each algorithm is presented by dimension. It is clear that, in terms of critical simplices produced, algorithm \GenerateMDM outperforms all its predecessors. This is partially explained by the fact that the algorithms are tested on datasets for which the considered vertex map $v\mapsto (|x|,|y|)$ is not component-wise injective. As explained in Section \ref{sec:PreviousAlgorithms}, this means other algorithms which require this hypothesis to be verified have to preprocess the input function by perturbing it slightly, which can potentially induce factitious critical simplices.



	Furthermore, for every considered input and for each level set $L_u$, we can compute $\rank H_p(\Cl L_u, \Ex L_u)$ and verify that it is exactly the number of critical $p$-simplices of the output in $L_u$, meaning that \GenerateMDM produced relative-perfect gradient fields in these cases. Although this is a strongly desired result, it is not surprising since each $L_u$ is quite small in these examples. Hence, in order to test if the algorithm outputs as few critical simplices as possible on larger and more complex level sets, we run \GenerateMDM on different datasets with the constant input function $f = 0$. This way, each dataset is partitioned into a unique level set $L_0$, so it is processed as a whole inside subfunction \ExpandMDM of \GenerateMDM.

	\begin{table}
		\caption{Critical simplices output by \GenerateMDM with $f=0$ and different datasets}\label{table:OptimalityCheck}
		\begin{tabular}{l||lll|lll|lll}
			Dataset & $|K_0|$ & $|\cC_0|$ & $\beta_0$ & $|K_1|$ & $|\cC_1|$ & $\beta_1$ & $|K_2|$ & $|\cC_2|$ & $\beta_2$ \\ \hline\hline
			\texttt{sphere} & $802$ & $1$ & $1$ & $2400$ & $0$ & $0$ & $1600$ & $1$ & $1$ \\
			\texttt{torus} & $800$ & $1$ & $1$ & $2400$ & $2$ & $2$ & $1600$ & $1$ & $1$ \\
			\texttt{klein\_bottle} & $800$ & $1$ & $1$ & $2400$ & $2$ & $2^*$ & $1600$ & $1$ & $1^*$ \\
			\texttt{projective\_plane} & $1081$ & $1$ & $1$ & $3240$ & $1$ & $1^*$ & $2160$ & $1$ & $1^*$ \\
			\texttt{dunce\_hat} & $1825$ & $1$ & $1$ & $5496$ & $1$ & $0$ & $3672$ & $1$ & $0$ \\
			\texttt{tie} & $2014$ & $18$ & $18$ & $5944$ & $149$ & $148$ & $3806$ & $7$ & $6$ \\
			\texttt{space\_shuttle} & $2376$ & $5$ & $5$ & $6330$ & $7$ & $7$ & $3952$ & $0$ & $0$ \\
			\texttt{x\_wing} & $3099$ & $18$ & $18$ & $9190$ & $53$ & $50$ & $6072$ & $16$ & $13$ \\
			\texttt{space\_station} & $5749$ & $110$ & $110$ & $15949$ & $116$ & $116$ & $10233$ & $39$ & $39$ \\
		\end{tabular}
		\begin{flushright}
			\begin{footnotesize}
				$^*$Betti numbers for homology with coefficients in $\Z_2$
			\end{footnotesize}
		\end{flushright}
	\end{table}

	The results are in Table \ref{table:OptimalityCheck}. Nine different datasets are considered: five triangulations of well-known topological spaces and the four datasets from Table \ref{table:MatchingComparison}. Again, we use the indexing map as described at the beginning of Section \ref{sec:AlgoDescription}.

	Knowing that a \mdm function defined on a dataset $K$ has at least $\beta_p(K)$ critical $p$-simplices (see Section \ref{sec:MorseIneqRelativePerfect}), we see from Table \ref{table:OptimalityCheck} that the algorithm generates an almost minimal number of critical simplices. Indeed, \GenerateMDM builds optimal functions on the first four triangulations. The number of critical simplices of the \mdm function generated on the triangulation of the dunce hat is not quite minimal, but it can be shown that there exists no perfect gradient field on the dunce hat \citep{Ayala2012}. Hence, the output is as optimal as it can be in this case as well.

	Furthermore, we see that \GenerateMDM outputs a \mdm function with a few critical simplices in excess when given as input larger and more complex datasets. Nonetheless, it is worth noting that the chosen indexing map has an impact on the output and, for the four datasets \texttt{tie}, \texttt{space\_shuttle}, \texttt{x\_wing} and \texttt{space\_station}, it is possible to find alternative indexing maps for which the output is optimal.

	\subsection{Dependence on the indexing map and triangulation}\label{sec:ExperimentsTriangulationIndexingMap}

	We see from Algorithms \ref{algo:GenerateMDM} and \ref{algo:ExpandMDM} that the indexing map $I$ is only used inside \ExpandMDM and its role is to serve as a tiebreaker when there are more than one simplex that could potentially be processed. We illustrate here how the choice of $I$ affects the output of \GenerateMDM.

	To do so, we consider the following particular constructions of $I$. Let $K$ be a simplicial complex embedded in $\R^3$ with vertices $K_0 = \{v_0,...,v_n\}$. Note $(x_i,y_i,z_i)$ the coordinates of each $v_i\in K_0$. We can choose to label the vertices so that $z_i < z_j\Rightarrow i < j$, meaning that the labels of the vertices increase as we move along the $z$-axis. Then, each $\sigma\in K$ may be represented by the labels of its vertices in decreasing order, and we can define $I:K\rightarrow\N$ as the index map obtained by ordering lexicographically $K$. We could show that $I$ is an admissible indexing map such that $\max_{v_i\in\tau}z_i < \max_{v_j\in\sigma}z_j \Rightarrow I(\tau) < I(\sigma)$, so the value of $I$ globally increases in the direction of the $z$-axis. We call \emph{$z$-increasing} such an admissible index map. Alternatively, we can choose to label the vertices so that $z_i < z_j\Rightarrow i > j$ in order to obtain a \emph{$z$-decreasing} indexing map such that $\max_{v_i\in\tau}z_i < \max_{v_j\in\sigma}z_j \Rightarrow I(\tau) > I(\sigma)$. Similarly, we can define indexing maps which are increasing or decreasing along the $x$ or $y$ axis.

	\begin{figure}[ht]
		\centering
		\subcaptionbox{\label{figSub:SphereIndexZPareto}}[0.45\textwidth]{
			\centering
			\includegraphics[width=\linewidth]{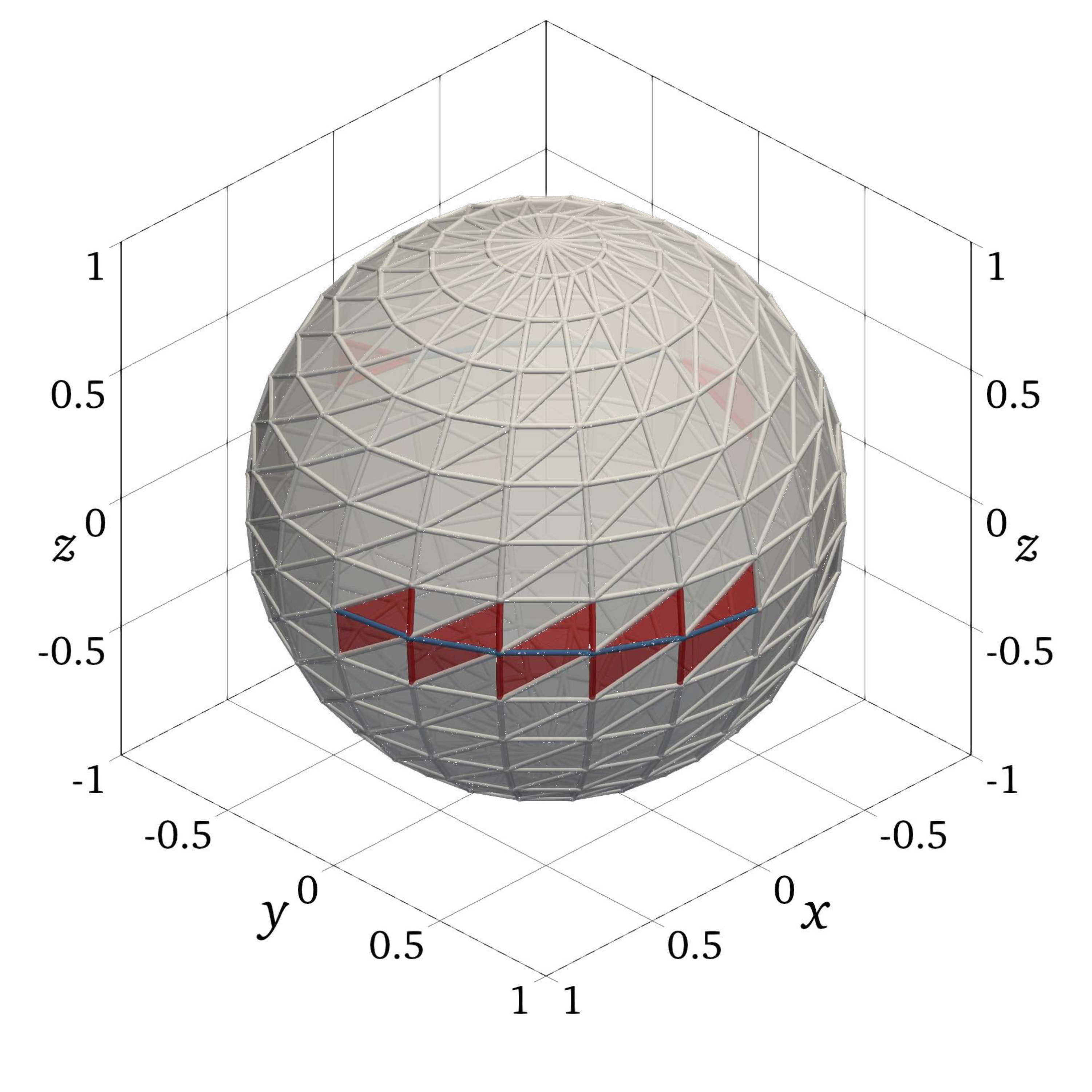}
		}

		\subcaptionbox{\label{figSub:SphereIndexZ+}}[0.45\textwidth]{
			\centering
			\includegraphics[width=\linewidth]{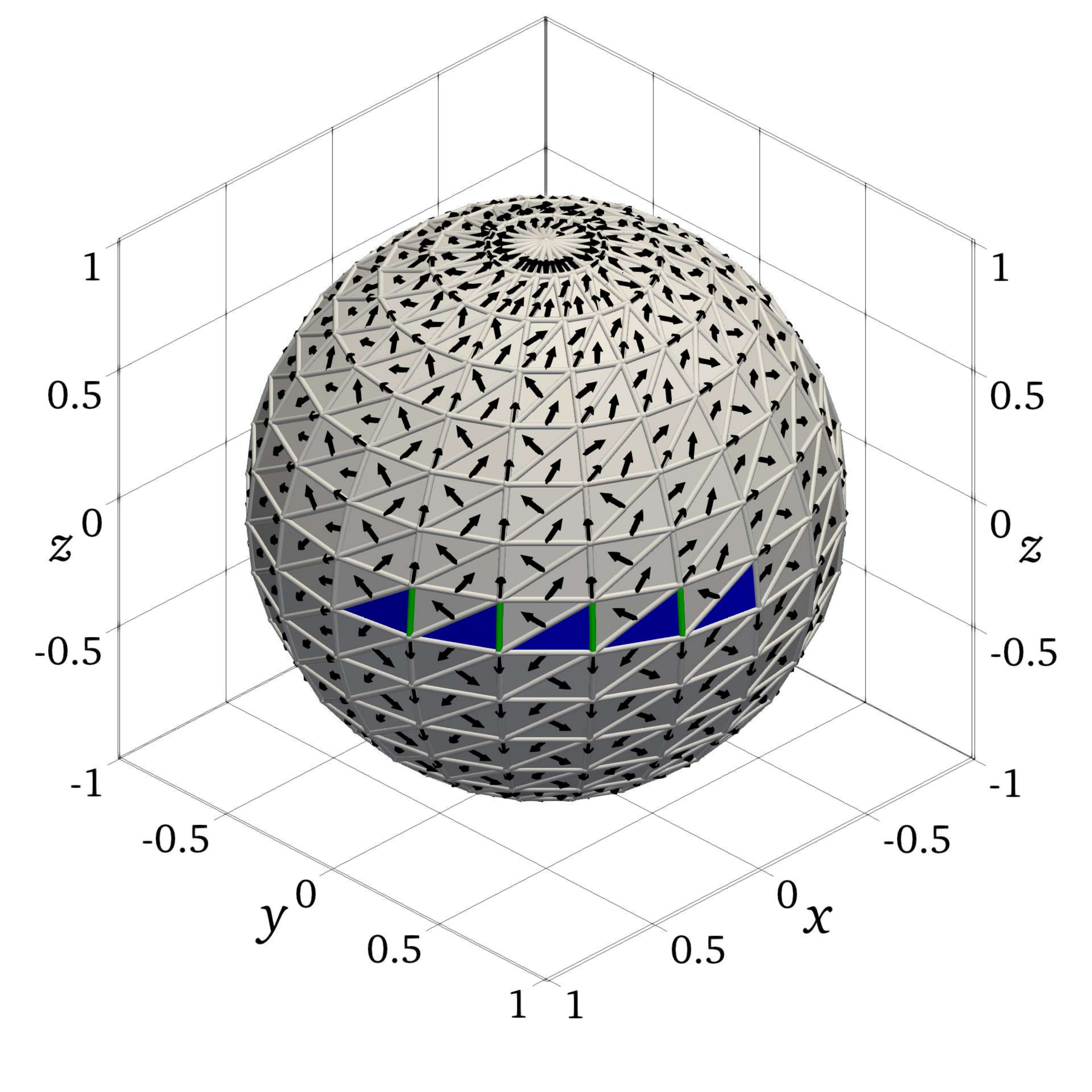}
		}
		\subcaptionbox{\label{figSub:SphereIndexZ-}}[0.45\textwidth]{
			\centering
			\includegraphics[width=\linewidth]{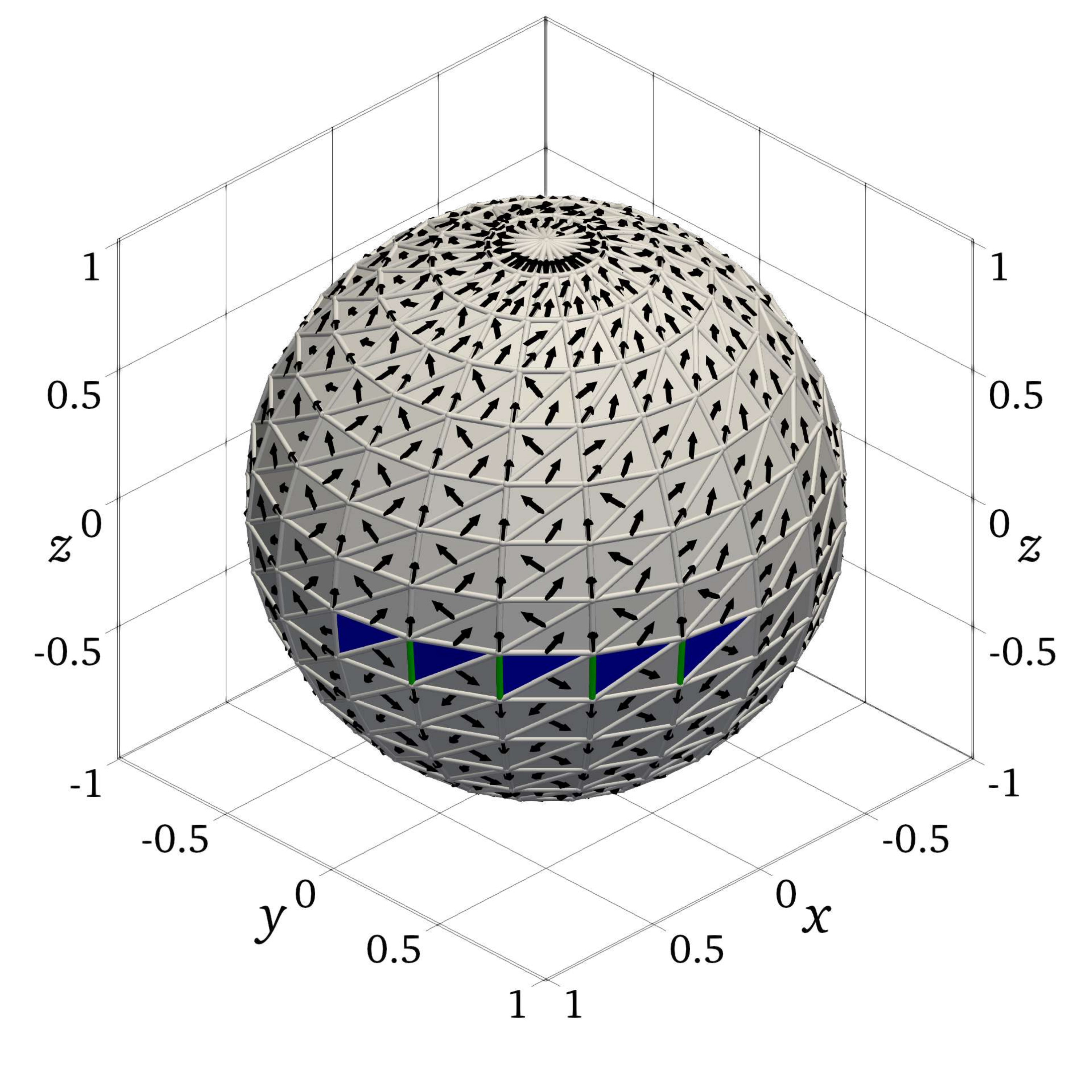}
		}
		\caption{In \subref{figSub:SphereIndexZPareto}, the Pareto set $\Pareto{f}$, where $f$ is the max-extension of the vertex map $v\mapsto(x,y)$. The primary Pareto simplices are in blue, those nonprimary are in red. In \subref{figSub:SphereIndexZ+}, the output of \GenerateMDM when given $f$ and a $z$-increasing index map as input. In \subref{figSub:SphereIndexZ-}, the output of \GenerateMDM when given $f$ and a $z$-decreasing index map as input.}
		\label{fig:SphereIndexZ}
	\end{figure}

	Now, let $f:K\rightarrow\R^2$ be defined as the max-extension of $v\mapsto(x,y)$ on a triangulated sphere $K$. The chosen triangulation $K$ and the Pareto set $\Pareto{f}$ are illustrated in Figure \ref{figSub:SphereIndexZPareto}. If we define $I$ as a $z$-increasing index map, then \GenerateMDM produces the gradient field in Figure \ref{figSub:SphereIndexZ+}. If, instead, we choose a $z$-decreasing indexing map $I$, then \GenerateMDM outputs the gradient field in Figure \ref{figSub:SphereIndexZ-}. Globally, for both outputs, we observe that the vectors point in directions where both components of $f$ decrease. However, for some level sets, we can see that \ExpandMDM matched simplices differently. For instance, the Pareto simplices which are output as critical are not the same, and we can notice that the matchings made along the Pareto curve form vectors that point in the direction in which $I$ decreases.

	\begin{figure}[ht]
		\centering
		\subcaptionbox{\label{figSub:SphereIndexX}}[0.45\textwidth]{
			\centering
			\includegraphics[width=\linewidth]{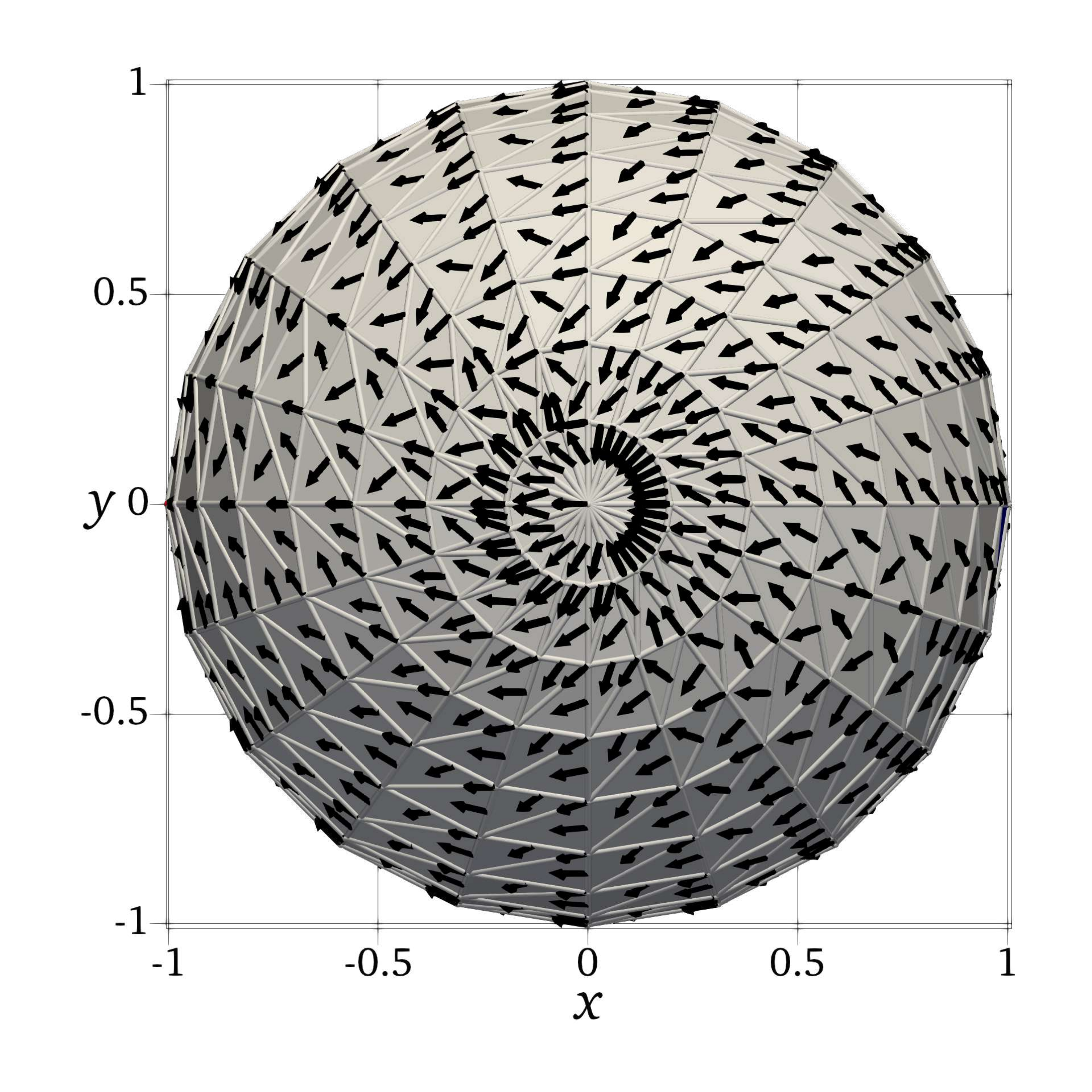}
		}
		\subcaptionbox{\label{figSub:SphereIndexY}}[0.45\textwidth]{
			\centering
			\includegraphics[width=\linewidth]{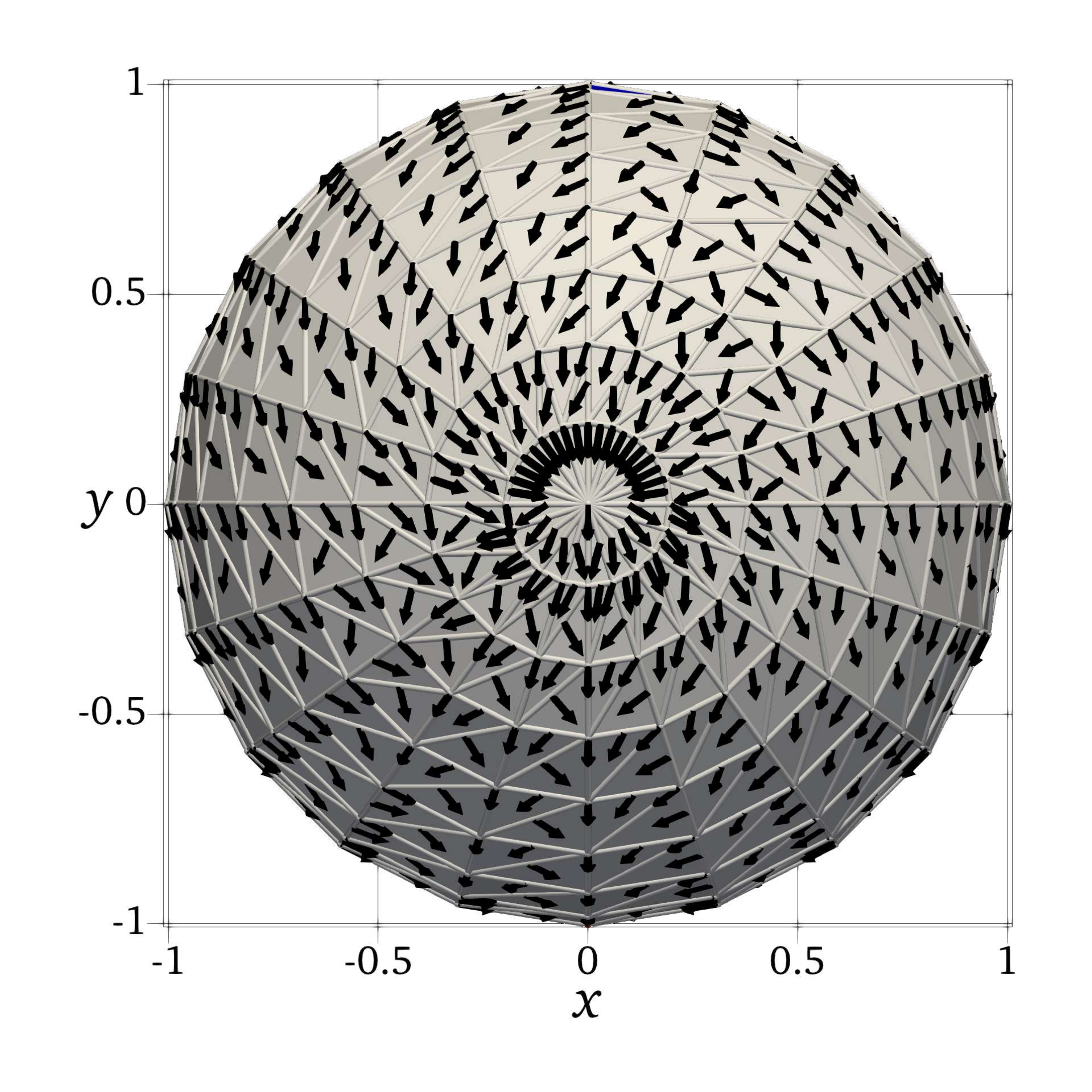}
		}
		\caption{Two outputs of \GenerateMDM when considering a triangulated sphere $K$ and $f=0$ as input along with two different indexing maps $I$. We have in \subref{figSub:SphereIndexX} the output for a $x$-increasing map $I$ and in \subref{figSub:SphereIndexY} for a $y$-increasing $I$.}
		\label{fig:SphereIndexXY}
	\end{figure}

	This last observation will be made clearer with the following example. Consider the previous triangulated sphere $K$, but let $f=0$. Then, \GenerateMDM processes $K$ as a single level set $K = L_0$ and the output depends more obviously on the indexing map $I$. In Figure \ref{figSub:SphereIndexX}, we have the output of \GenerateMDM when $I$ is $x$-increasing and, in Figure \ref{figSub:SphereIndexY}, the output when $I$ is $y$-increasing. For both outputs, we see that the gradient field generally points in the direction in which $I$ decreases.

	In the previous examples, we can verify that the outputs of \GenerateMDM are perfect relatively to the input function $f$. It is in part due to the size of level sets of $f$, which are quite small. This is not always the case when considering, for example, $f=0$ and larger datasets as inputs. Nonetheless, for all four datasets \texttt{tie}, \texttt{space\_shuttle}, \texttt{x\_wing} and \texttt{space\_station} from Tables \ref{table:MatchingComparison} and \ref{table:OptimalityCheck}, we can find indexing maps which produce perfect outputs and, in most cases, indexing maps that are increasing or decreasing along some axis produce very few extra critical simplices. In short, experiments show that the input indexing map has virtually no impact on the number of critical simplices output by \GenerateMDM. In particular, when the input function $f$ has many small level sets, \GenerateMDM always seems to produce outputs which are perfect relatively to $f$ independently of the chosen indexing map.

	\begin{figure}[ht]
		\centering
		\subcaptionbox{\label{figSub:SphereCubeGrid}}[0.45\textwidth]{
			\centering
			\includegraphics[width=\linewidth]{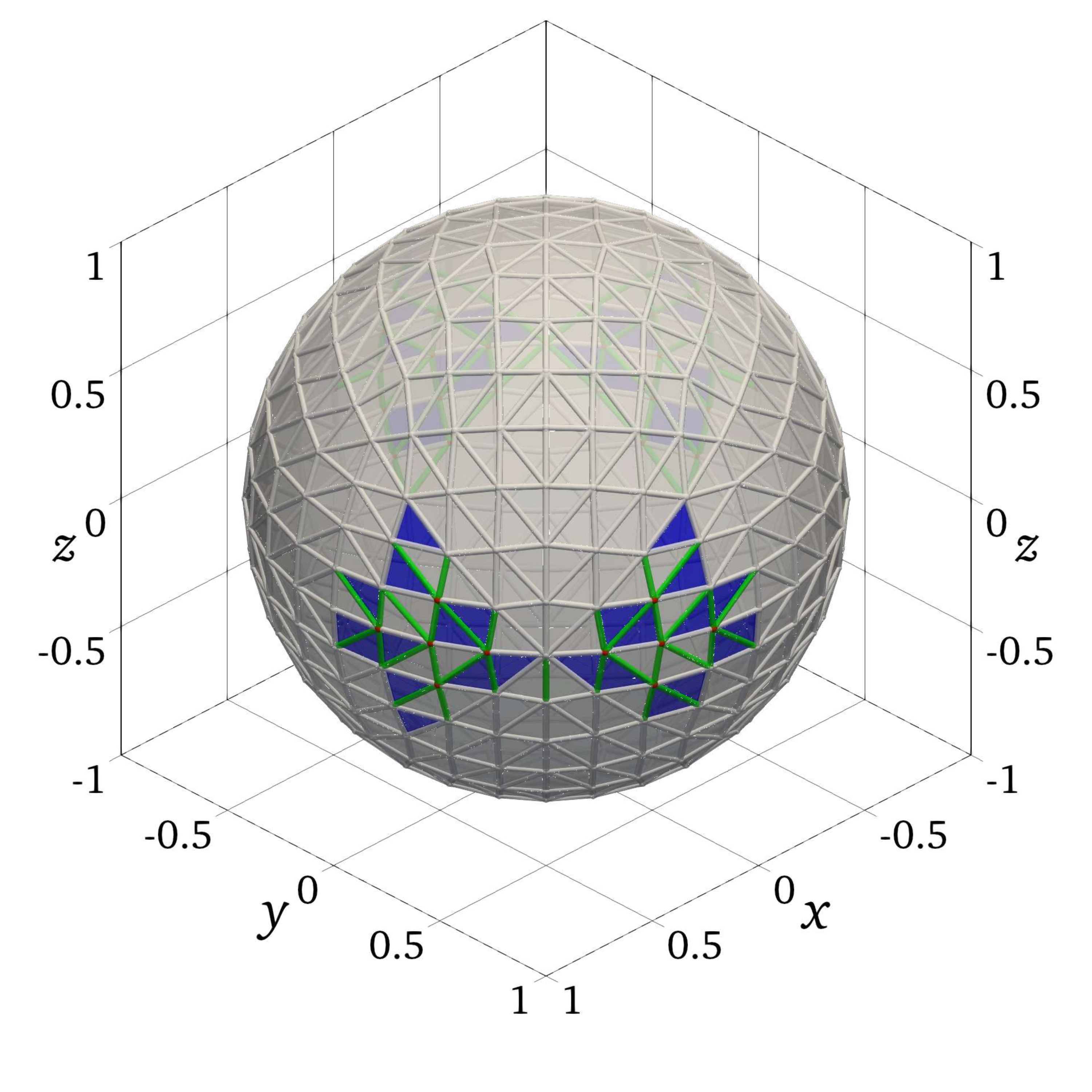}
		}
		\subcaptionbox{\label{figSub:SphereSoccerGrid}}[0.45\textwidth]{
			\centering
			\includegraphics[width=\linewidth]{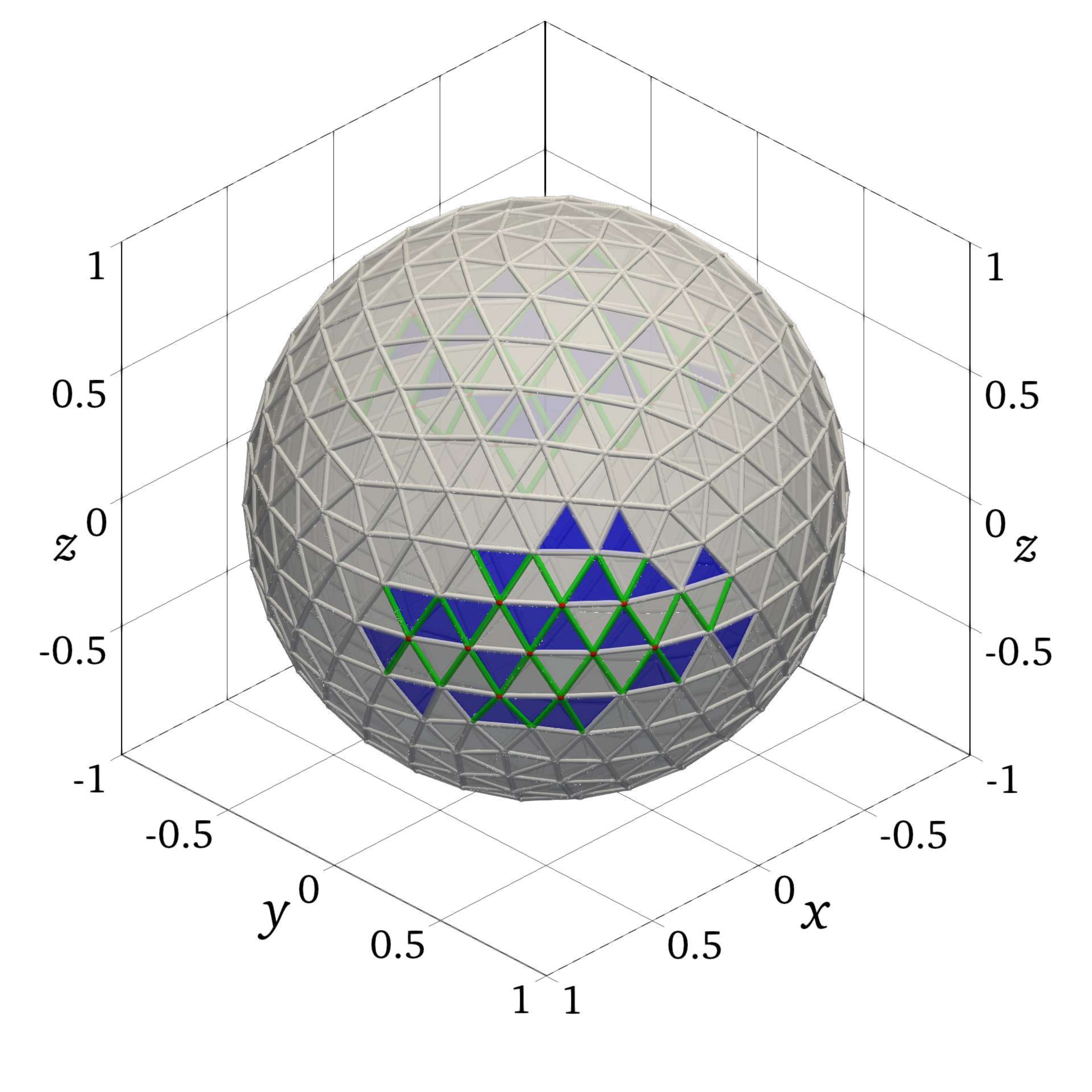}
		}
		\subcaptionbox{\label{figSub:SphereCubeGridPareto}}[0.45\textwidth]{
			\centering
			\includegraphics[width=\linewidth]{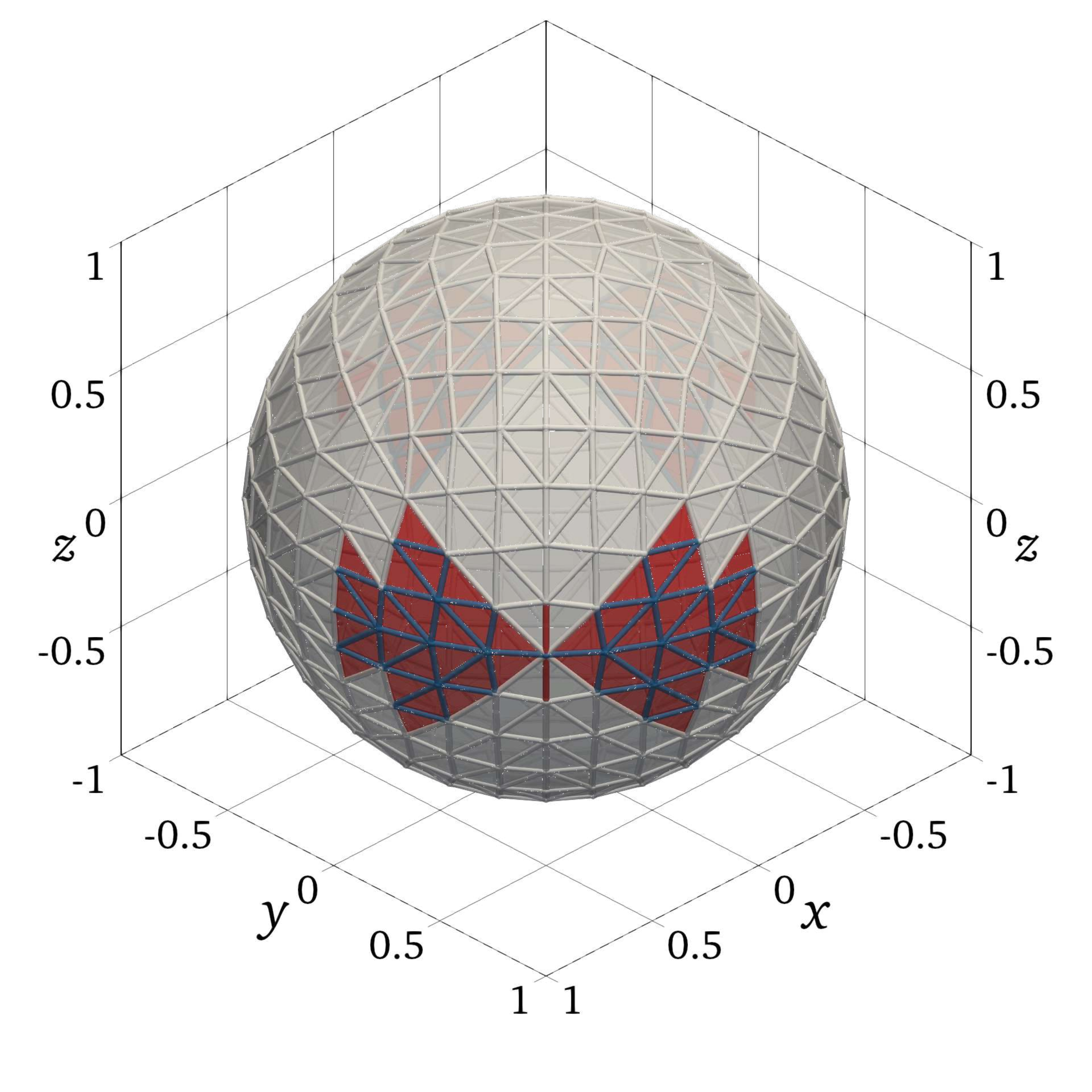}
		}
		\subcaptionbox{\label{figSub:SphereSoccerGridPareto}}[0.45\textwidth]{
			\centering
			\includegraphics[width=\linewidth]{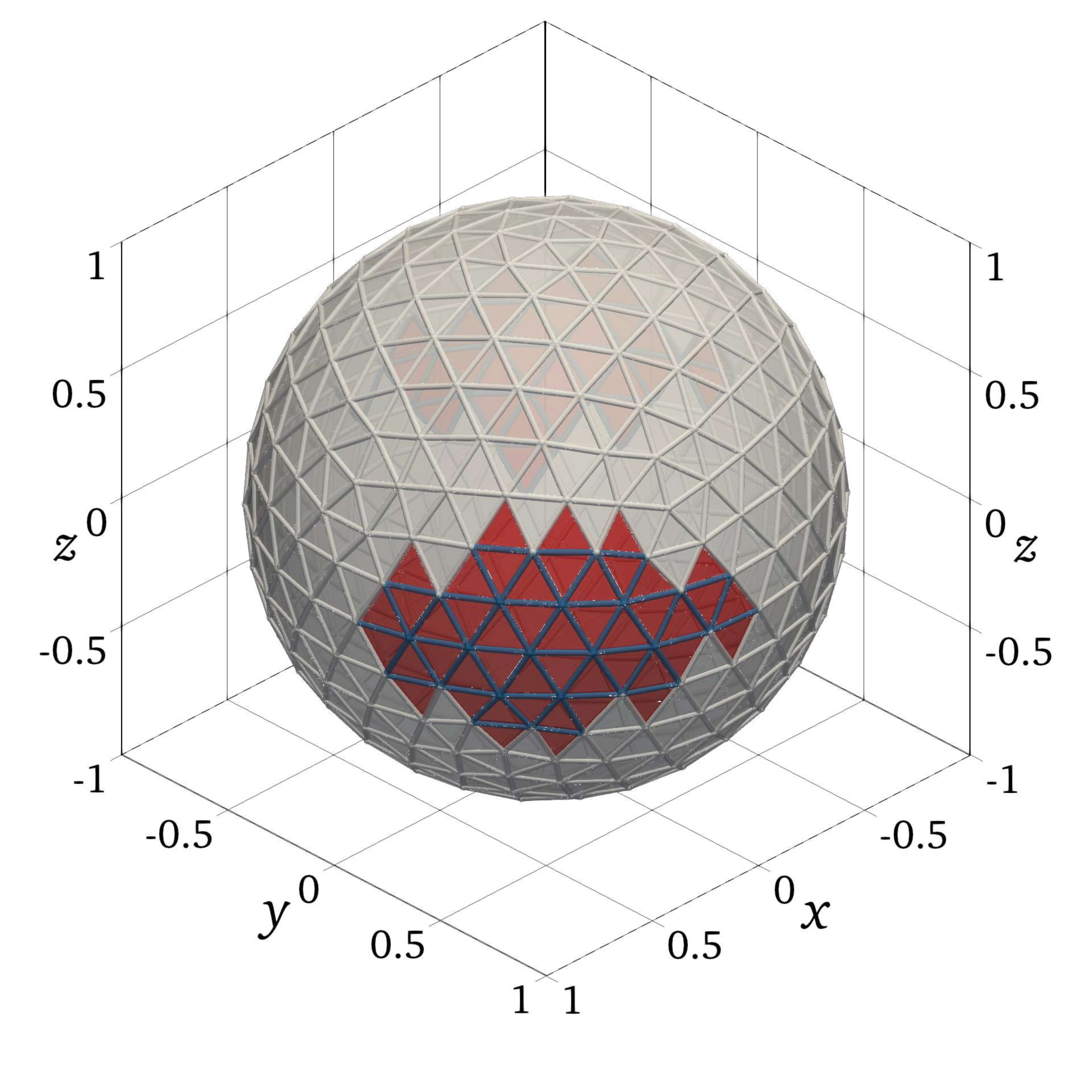}
		}
		\caption{In \subref{figSub:SphereCubeGrid}-\subref{figSub:SphereSoccerGrid}, critical simplices for different outputs of \GenerateMDM. In each subfigure, the max-extension of the vertex map $v\mapsto(x,y)$ is considered as input function, but the chosen triangulations of the sphere differ. In \subref{figSub:SphereCubeGridPareto}-\subref{figSub:SphereSoccerGridPareto}, the corresponding Pareto sets. The primary Pareto simplices are in blue, those nonprimary are in red.}
		\label{fig:SphereTriangulations}
	\end{figure}

	Furthermore, the way a given space is triangulated may influence significantly the critical simplices produced. To verify this, we use multiple meshes of the sphere generated with the Stripy library \citep{Stripy}. As shown in Figure \ref{fig:SphereTriangulations}, different triangulations yield visually dissimilar critical sets. That being said, we can see that this is due to the shape of the Pareto set of the input function on the given triangulation, and not to the algorithm itself.

	\subsection{Critical components}\label{sec:ExperimentsCritComp}

	We now see how Definition \ref{def:CriticalComponentSimG} of critical components, first introduced in \citep{Brouillette2022}, performs in practice. Recall that, for a \mdm function $g$, we partition its critical simplices using the equivalence relation $\Sim{g}$ defined as the transitive closure of $R_g$, such that $\sigma R_g\tau$ when
	\begin{enumerate}
		\item \label{item:DefR_g1Exp} $g_i(\sigma) = g_i(\tau)$ for some $i = 1,...,\maxdim$;
		\item \label{item:DefR_g2Exp} either $\sigma\rightconnects{g}\tau$ or $\sigma\leftconnects{g}\tau$.
	\end{enumerate}
	Condition \ref{item:DefR_g1Exp} ensures that $\sigma$ and $\tau$ can enter the multifiltration induced by $g$ at a same step, while condition \ref{item:DefR_g2Exp} implies that $\sigma$ and $\tau$ are connected and can interact with each other homologically.

	\begin{figure}[ht]
		\centering
		\subcaptionbox{\label{figSub:CritRelGworksMin}}[0.45\textwidth]{
			\centering
			\includegraphics[width=\linewidth]{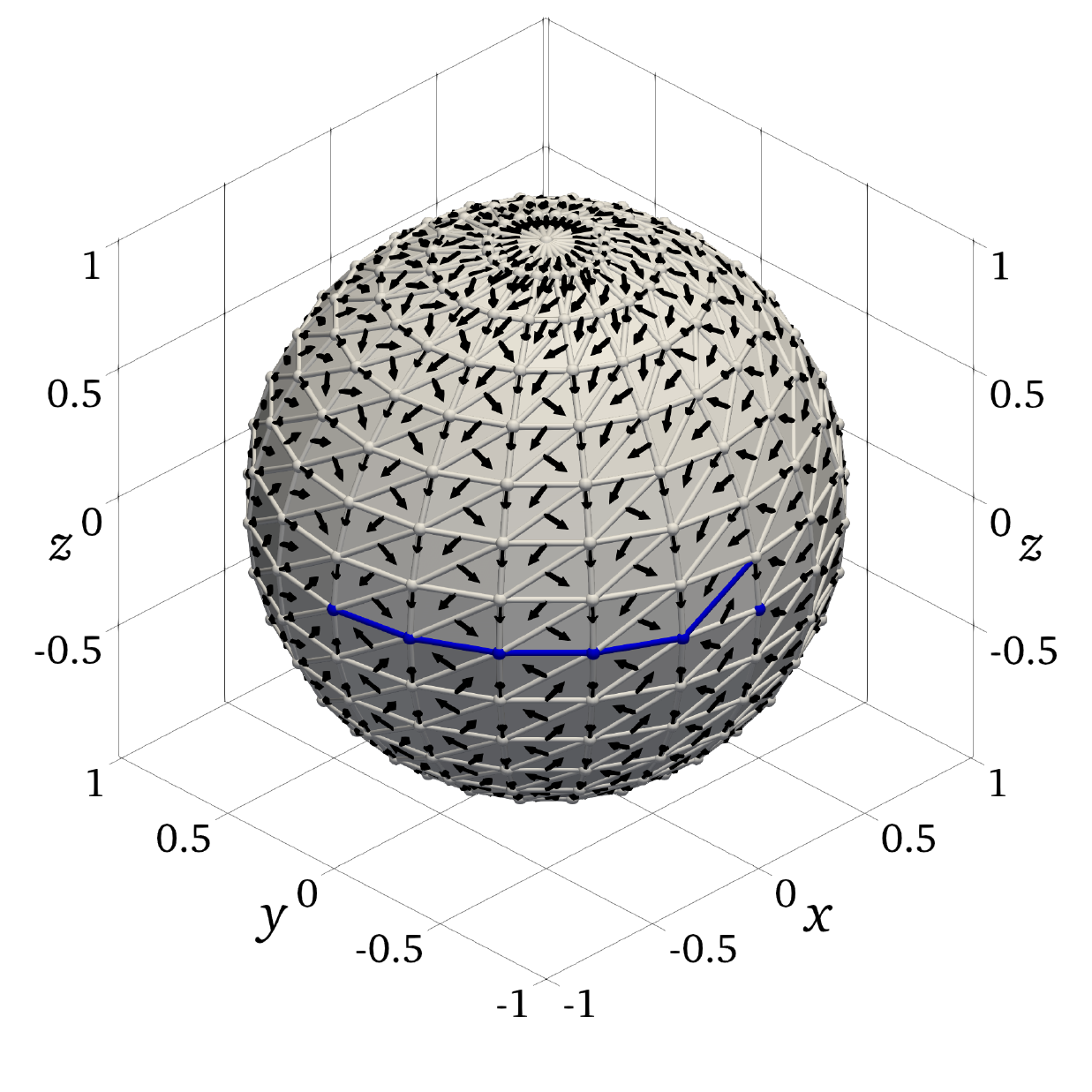}
		}
		\subcaptionbox{\label{figSub:CritRelGworksMax}}[0.45\textwidth]{
			\centering
			\includegraphics[width=\linewidth]{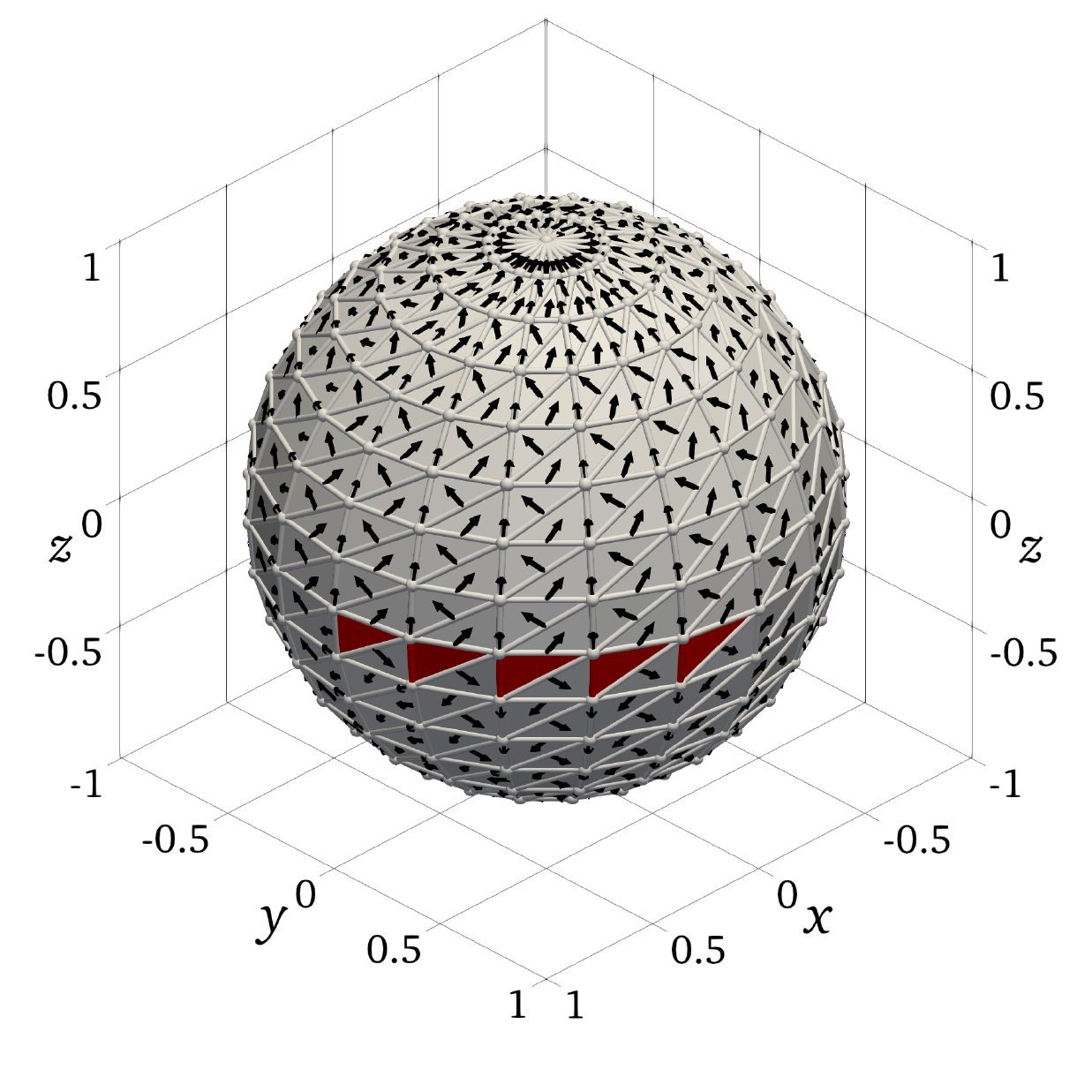}
		}
		\caption{Gradient field and critical components of $g$ with respect to $\Sim{g}$, where $g$ is the \mdm function output by \GenerateMDM when given as input the max-extension of the vertex map $v\mapsto (x,y)$.}
		\label{fig:CritRelGworks}
	\end{figure}

	The previous examples and Figure \ref{fig:ParetoNeighbourhoods} make it clear that, if we consider $K$ to be a triangulated sphere and $g:K\rightarrow\R^2$ to be a \mdm function approximating the max-extension of the vertex map $v\mapsto (x,y)$, we can expect the critical simplices of $g$ to form two distinct critical components along the equator of $K$. As illustrated in Figure \ref{fig:CritRelGworks}, $\Sim{g}$ yields the desired result for the given triangulation. Namely, we could characterize the blue component in Figure \ref{figSub:CritRelGworksMin} as a Pareto minimal component and the red one in Figure \ref{figSub:CritRelGworksMax} as Pareto maximal.

	It is worth noting that the critical component in Figure \ref{figSub:CritRelGworksMin} is not connected in the topological sense since one of its endpoint, the vertex with coordinates $(0,-1, 0)$, is isolated. Nonetheless, the gradient field of $g$ connects the critical arc to the isolated critical vertex, which explains why $\Sim{g}$ gives the expected result.

	\begin{figure}[ht]
		\centering
		\subcaptionbox{\label{figSub:CritRelGworksNot}}[0.45\textwidth]{
			\centering
			\includegraphics[width=\linewidth]{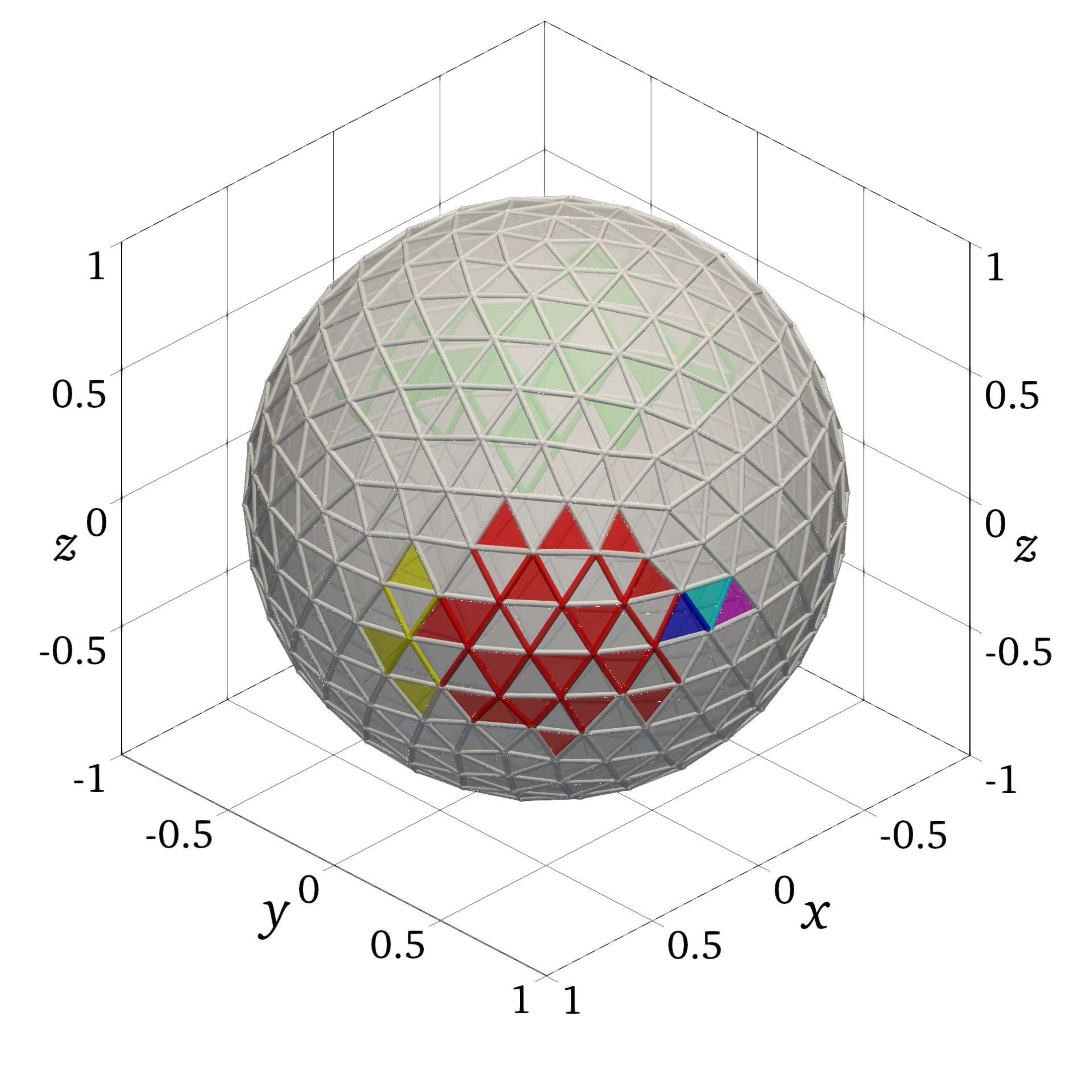}
		}
		\subcaptionbox{\label{figSub:CritRelG'works}}[0.45\textwidth]{
			\centering
			\includegraphics[width=\linewidth]{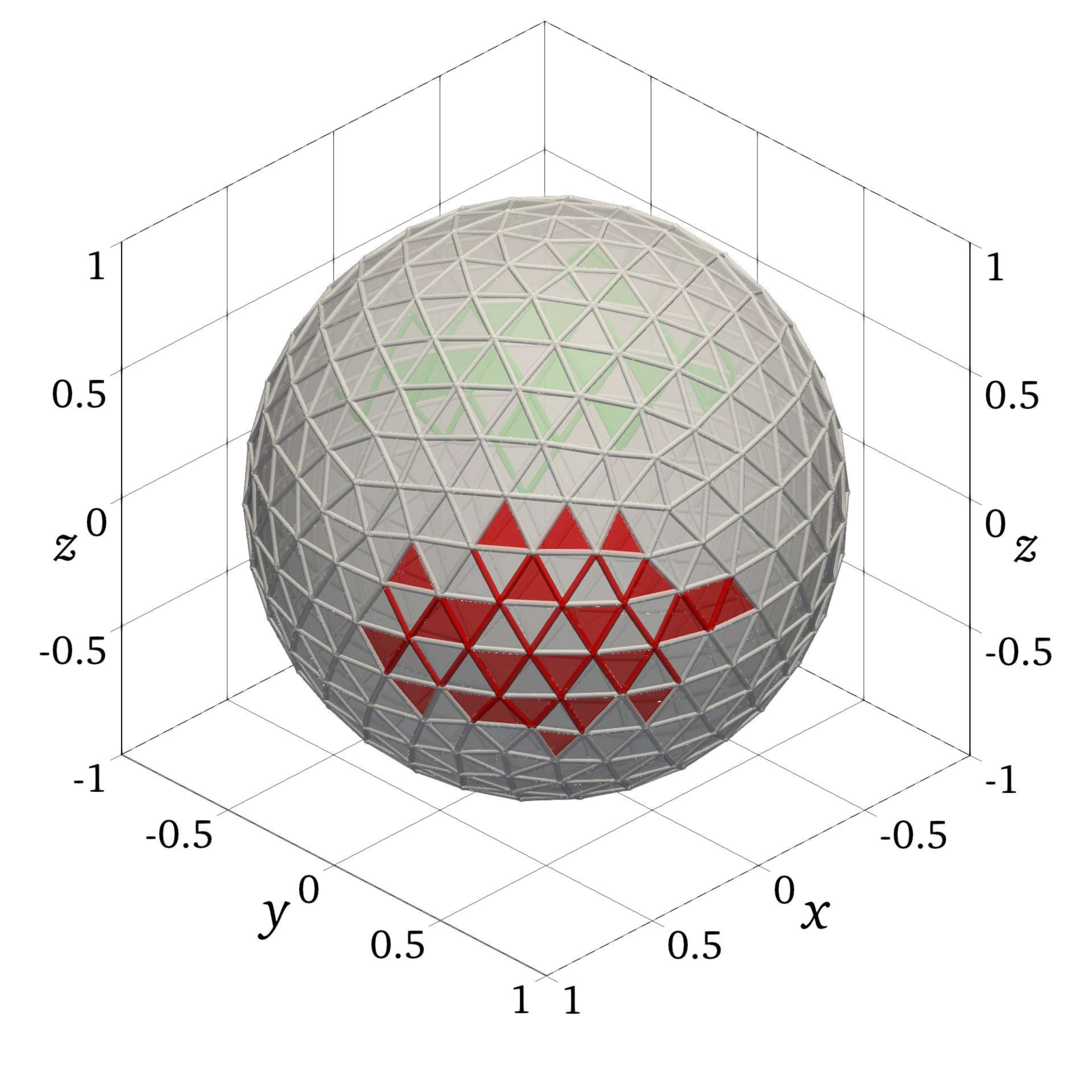}
		}
		\caption{Critical components of $g$, where $g$ is the \mdm function output by \GenerateMDM when given as input the max-extension of the vertex map $v\mapsto (x,y)$. In \subref{figSub:CritRelGworksNot}, the critical components are those with respect to $\Sim{g}$. In \subref{figSub:CritRelG'works}, we consider $\Sim{g}'$.}
		\label{fig:CritRelGworksNot}
	\end{figure}

	However, for some triangulations, the critical components induced by $\Sim{g}$ are not as desired. In Figure \ref{figSub:CritRelGworksNot}, for the given triangulation (which is the same as that in Figures \ref{figSub:SphereSoccerGrid} and \ref{figSub:SphereSoccerGridPareto}), we see that some simplices which should belong in a same critical component are separated in a few different components.

	\begin{figure}[ht]
		\centering
		\subcaptionbox{\label{figSub:AlgorithmExampleCompG}}[0.45\textwidth]{
			\centering
			\begin{tikzpicture}[scale=3]
				\coordinate (A) at (0, 0);
				\coordinate (B) at (1, 0);
				\coordinate (C) at (0, 1);
				\coordinate (D) at (1, 1);

				\fill[orange!25] (A) -- (B) -- (D) -- cycle;
				\fill[black!10] (A) -- (C) -- (D) -- cycle;

				\draw[ultra thick, orange] (A) -- (B);
				\draw[thick] (A) -- (C);
				\draw[thick] (A) -- (D);
				\draw[thick] (B) -- (D);
				\draw[ultra thick, blue] (C) -- (D);

				\draw[ultra thick, -stealth] (A) -- ($0.5*(A)+0.5*(C)$);
				\draw[ultra thick, -stealth] (D) -- ($0.5*(B)+0.5*(D)$);
				\draw[ultra thick, -stealth] ($0.5*(A)+0.5*(D)$) -- ($0.33*(A)+0.33*(C)+0.33*(D)$);

				\node at (A){$\bullet$};
				\node[magenta] at (B){$\bullet$};
				\node[green] at (C){$\bullet$};
				\node at (D){$\bullet$};

				\node[below left] at (A) {$(1,2)$};
				\node[below right] at (B) {$(0,0)$};
				\node[above left] at (C) {$(0,0)$};
				\node[above right] at (D) {$(2,1)$};

				\node[below] at ($0.5*(A)+0.5*(B)$) {$(1+\delta,2)$};
				\node[left] at ($0.5*(A)+0.5*(C)$) {$(1,2)$};
				\node[xshift=0.2in, yshift=-0.125in] at ($0.5*(A)+0.5*(D)$) {$(2+\delta,2)$};
				\node[right] at ($0.5*(B)+0.5*(D)$) {$(2,1)$};
				\node[above] at ($0.5*(C)+0.5*(D)$) {$(2+\delta,1)$};

				\node[below] at ($0.35*(A)+0.35*(B)+0.3*(D)$) {$(2+2\delta,2)$};
				\node at ($0.3*(A)+0.35*(C)+0.35*(D)$) {$(2+\delta,2)$};
			\end{tikzpicture}
		}
		\subcaptionbox{\label{figSub:AlgorithmExampleCompGprime}}[0.45\textwidth]{
			\centering
			\begin{tikzpicture}[scale=3]
				\coordinate (A) at (0, 0);
				\coordinate (B) at (1, 0);
				\coordinate (C) at (0, 1);
				\coordinate (D) at (1, 1);

				\fill[orange!25] (A) -- (B) -- (D) -- cycle;
				\fill[black!10] (A) -- (C) -- (D) -- cycle;

				\draw[ultra thick, orange] (A) -- (B);
				\draw[thick] (A) -- (C);
				\draw[thick] (A) -- (D);
				\draw[thick] (B) -- (D);
				\draw[ultra thick, orange] (C) -- (D);

				\draw[ultra thick, -stealth] (A) -- ($0.5*(A)+0.5*(C)$);
				\draw[ultra thick, -stealth] (D) -- ($0.5*(B)+0.5*(D)$);
				\draw[ultra thick, -stealth] ($0.5*(A)+0.5*(D)$) -- ($0.33*(A)+0.33*(C)+0.33*(D)$);

				\node at (A){$\bullet$};
				\node[magenta] at (B){$\bullet$};
				\node[green] at (C){$\bullet$};
				\node at (D){$\bullet$};

				\node[below left] at (A) {$(1,2)$};
				\node[below right] at (B) {$(0,0)$};
				\node[above left] at (C) {$(0,0)$};
				\node[above right] at (D) {$(2,1)$};

				\node[below] at ($0.5*(A)+0.5*(B)$) {$(1,2)$};
				\node[left] at ($0.5*(A)+0.5*(C)$) {$(1,2)$};
				\node[xshift=0.125in, yshift=-0.125in] at ($0.5*(A)+0.5*(D)$) {$(2,2)$};
				\node[right] at ($0.5*(B)+0.5*(D)$) {$(2,1)$};
				\node[above] at ($0.5*(C)+0.5*(D)$) {$(2,1)$};

				\node[below] at ($0.35*(A)+0.35*(B)+0.3*(D)$) {$(2,2)$};
				\node at ($0.3*(A)+0.35*(C)+0.35*(D)$) {$(2,2)$};
			\end{tikzpicture}
		}
		\caption{In \subref{figSub:AlgorithmExampleCompG}, a \mdm function $g$ and its gradient field output by \GenerateMDM, as shown in Figure \ref{fig:AlgorithmExample}. The critical components of $g$ with respect to $\Sim{g}$ are represented in green, pink, blue and orange. In \subref{figSub:AlgorithmExampleCompGprime}, the admissible map $f$ used to generate $g$. The critical components of $g$ with respect to $\Sim{g}'$ are represented in green, pink and orange.} \label{fig:AlgorithmExampleComponents}
	\end{figure}

	This is due to the fact that, as stated in Proposition \ref{prop:ComputeGworks}, we have that $g_1$ is not always exactly equal to $f_1$. Hence, although the difference between the two functions can be arbitrarily small, we do not necessarily have $g_1(\sigma)=g_1(\tau)$ when $f_1(\sigma)=f_1(\tau)$, so the tiny discrepancies induced by Algorithm \ref{algo:ComputeG} sometimes suffice to induce disparities between the desired critical components and the actual partition obtained with respect to $\Sim{g}$. This is illustrated in Figure \ref{fig:AlgorithmExampleComponents}.

	\pagebreak

	To resolve this issue, when $g$ is a \mdm function output by \GenerateMDM with a multifiltering function $f$ as input, we can substitute condition \ref{item:DefR_g1Exp} of the definition of $R_g$ given previously by
	\begin{itemize}
		\item[$(1')$] $f_i(\sigma) = f_i(\tau)$ for some $i = 1,...,\maxdim$.
	\end{itemize}
	We note $R_g'$ the updated relation on the critical simplices of $g$ and $\Sim{g}':= \bar{R}_g'$ the associated equivalence relation. Then, the critical components in Figure \ref{figSub:CritRelGworksNot} become those in Figure \ref{figSub:CritRelG'works}, so we obtain the desired result.

	\begin{figure}[ht]
		\centering
		\subcaptionbox{\label{figSub:CritRelG'worksNotPareto}}[0.56\textwidth]{
			\centering
			\includegraphics[width=\linewidth]{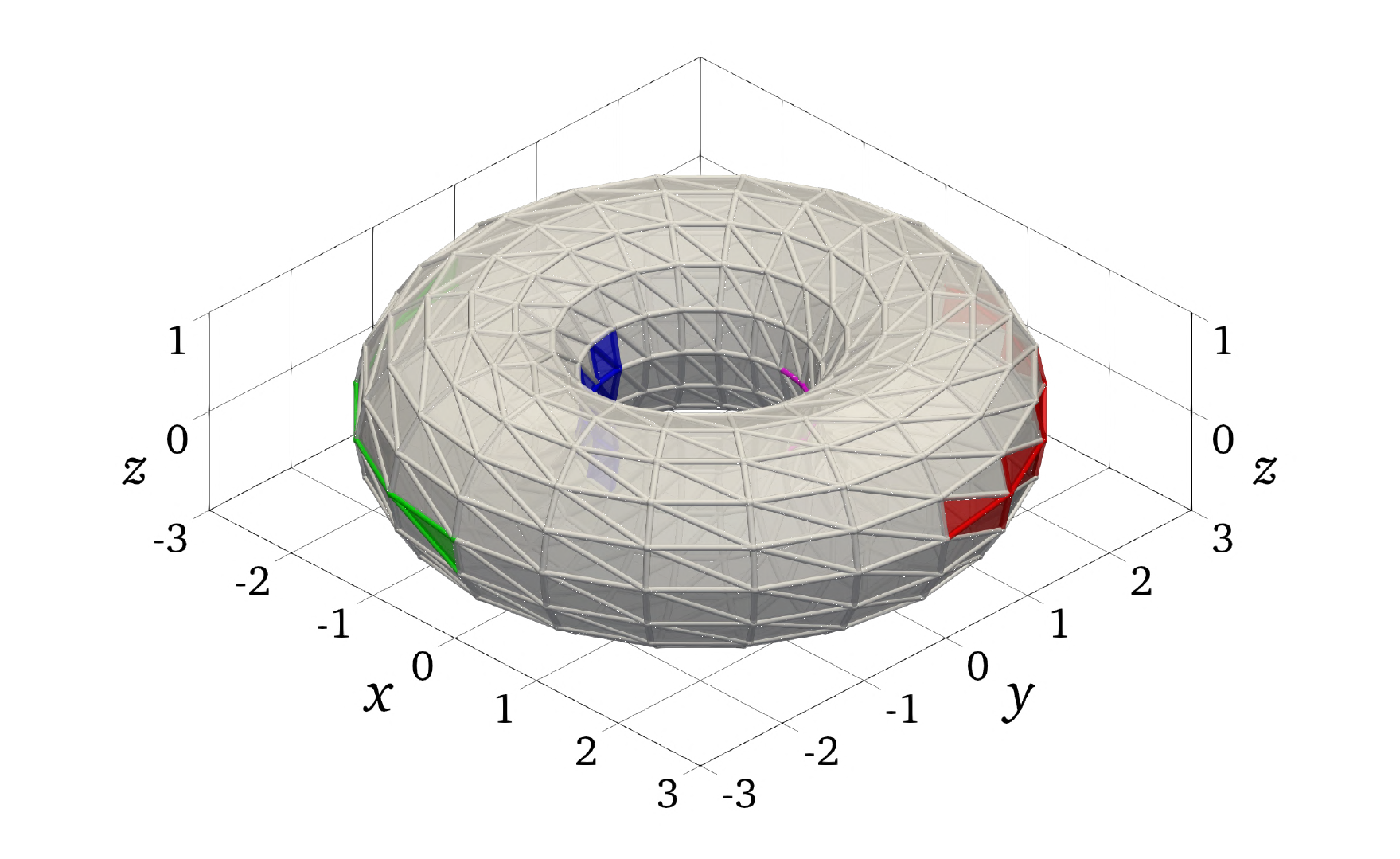}
		}
		\subcaptionbox{\label{figSub:CritRelG'worksNotParetoAbove}}[0.34\textwidth]{
			\centering
			\includegraphics[width=\linewidth]{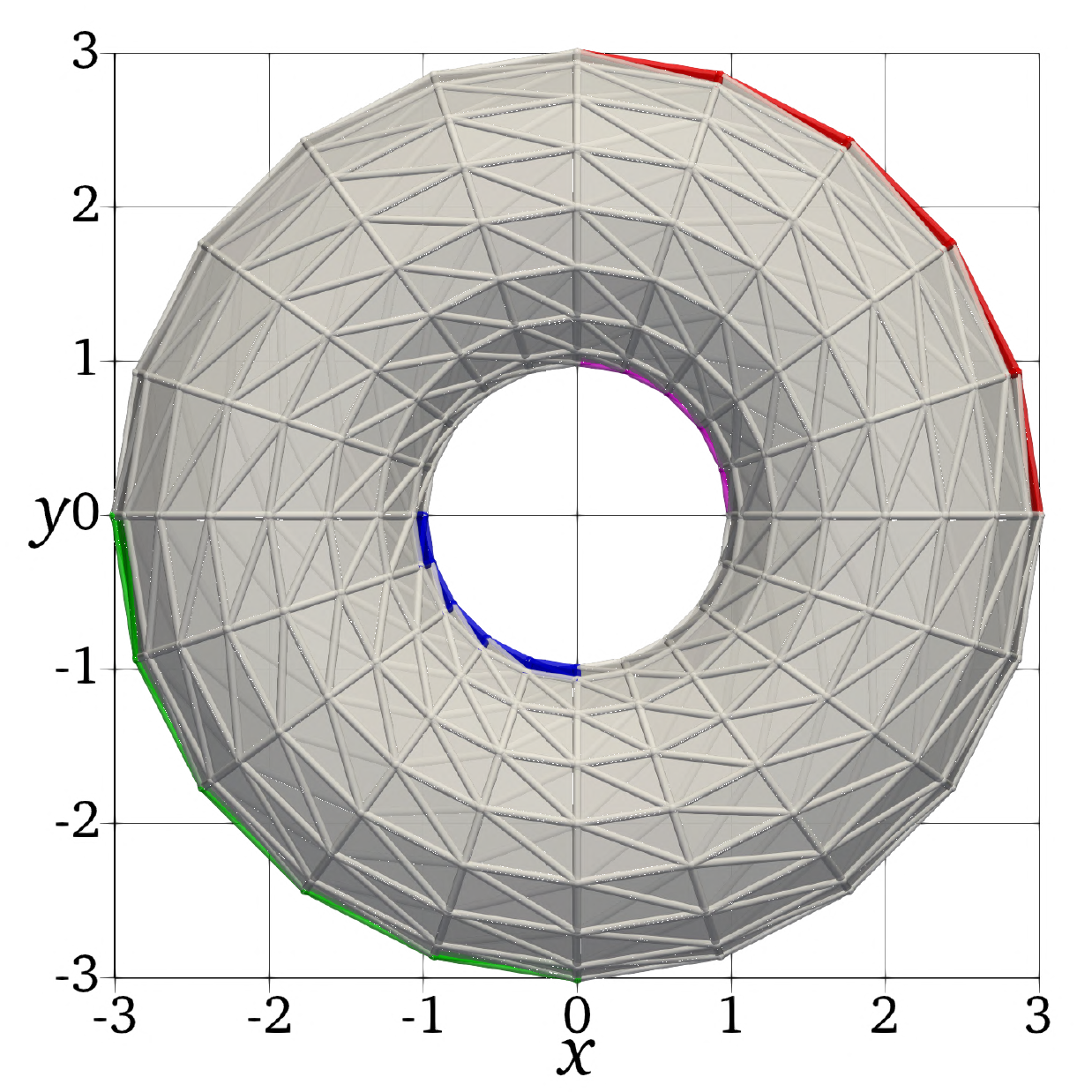}
		}
		\caption{In green, blue, purple and red, the four connected components of the Pareto set of the max-extension of the vertex map $v\mapsto(x,y)$ on a triangulated torus.}
		\label{fig:CritRelG'worksNotPareto}
	\end{figure}

	\begin{figure}[ht]
		\centering
		\subcaptionbox{\label{figSub:CritRelG'worksNot}}[0.56\textwidth]{
			\centering
			\includegraphics[width=\linewidth]{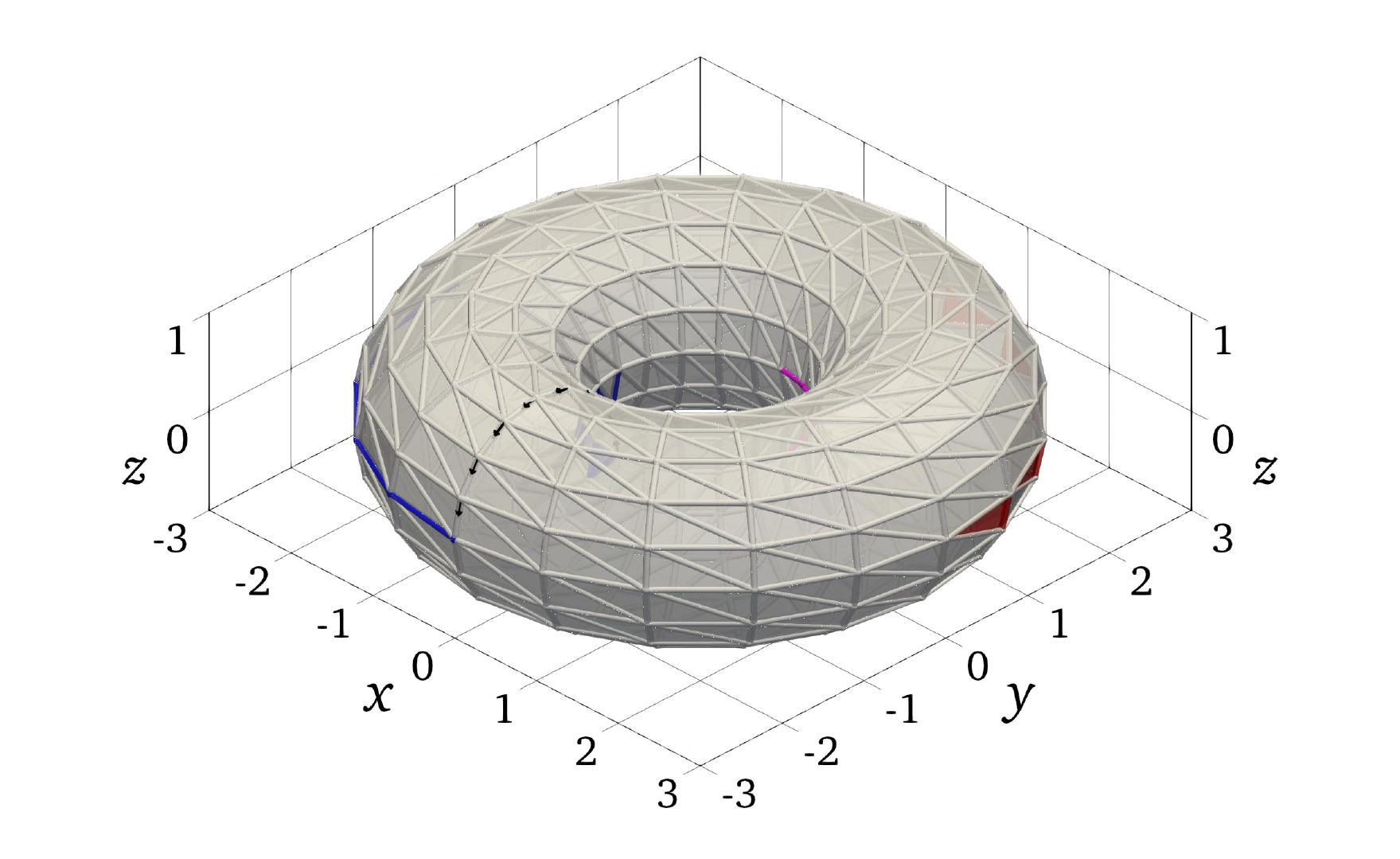}
		}
		\subcaptionbox{\label{figSub:CritRelG'worksNotCut}}[0.34\textwidth]{
			\centering
			\includegraphics[width=\linewidth]{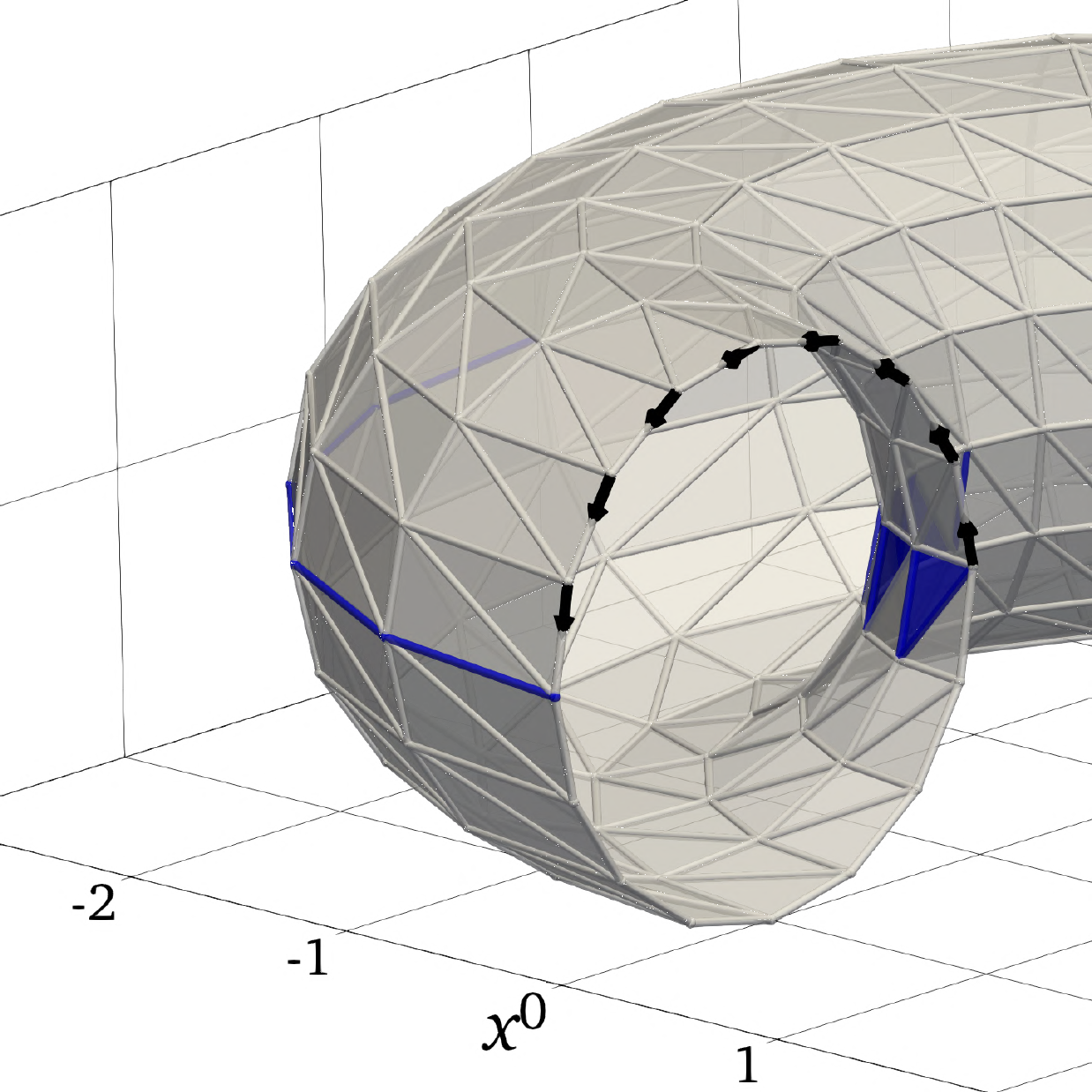}
		}
		\caption{In \subref{figSub:CritRelG'worksNot}, critical components of $g$ with respect to $\Sim{g}'$, where $g$ is the \mdm function output by \GenerateMDM when given the max-extension $f$ of the vertex map $v\mapsto(x,y)$ as input. There are three components, represented in blue, purple and red. The arrows show a gradient path connecting the critical simplices of the blue component along the $x=0$ plane. In \subref{figSub:CritRelG'worksNotCut}, a view of the torus zoomed in on the blue critical component.}
		\label{fig:CritRelG'worksNot}
	\end{figure}

	Still, if $f$ is the max-extension of a vertex map that is not component-wise injective, it becomes possible for critical simplices $\sigma$ and $\tau$ that are expected to belong to different critical components to be such that $\sigma R_g' \tau$. Indeed, consider $f$ to be the max-extension of the vertex map $v\mapsto(x,y)$ defined on the torus in Figure \ref{fig:CritRelG'worksNotPareto}. From the connected components of $\Pareto{f}$, we can deduce that a \mdm function generated using $f$ should have four critical components corresponding to a Pareto minimal component (in green), two Pareto "saddle" components (in blue and purple) and a Pareto maximal component (in red). However, Figure \ref{fig:CritRelG'worksNot} shows that the \mdm function $g$ output by \GenerateMDM only has three critical components. This is due to the vertices with coordinates $(0,-3,0)$ and $(0,-1,0)$, which have equal $x$ coordinates and are linked by a gradient path of $g$.

	Note that this could not happen if the input map $f$ was the max-extension of a component-wise injective vertex map. Nevertheless, to ensure the concept of critical components yields desirable results in our more general setting, we also substitute condition \ref{item:DefR_g2Exp} of the definition of $R_g$ in order to obtain the following definition.

	\begin{defn}\label{def:CritComponentsF}
		Assume $g:K\rightarrow\R^\maxdim$ is a \mdm function compatible with a multifiltering function $f:K\rightarrow\R^\maxdim$. For $\sigma\in K$, we note $C_\sigma$ the connected component of $\sigma$ in the level set $L_{f(\sigma)}$. Then, we let $\Sim{f}:=\bar{R}_f$, where the relation $R_f$ is defined on the critical simplices of $g$ so that $\sigma R_f \tau$ when:
		\begin{enumerate}
			\item \label{item:DefR_f1} $f_i(\sigma) = f_i(\tau)$ for some $i = 1,...,\maxdim$;
			\item \label{item:DefR_f2} there exists some $\sigma'\in C_\sigma$ and $\tau'\in C_\tau$ such that either $\sigma'\geq\tau'$ or $\sigma'\leq\tau'$.
		\end{enumerate}
	\end{defn}

	The relation $R_f$ between critical simplices follows the idea of $R_g'$. The first condition ensures that $\sigma$ and $\tau$ can enter the multifiltration induced by $f$ at a same step and the second implies that $\sigma$ and $\tau$ are connected in some way. The difference is in the idea of connectedness: the relation $R_g'$ uses the gradient field of $g$ to determine if $\sigma$ and $\tau$ are connected, while $R_f$ uses their level sets with respect to $f$. Furthermore, when $g$ is perfect relatively to $f$, we can deduce from condition \ref{item:DefR_f2} of Definition \ref{def:CritComponentsF} that, if two critical simplices $\sigma$ and $\tau$ are such that $\sigma\Sim{f}\tau$, then they are connected by a path entirely contained in $\Pareto{f}$. Hence, for a relative-perfect \mdm function $g$, each of its critical components with respect to $\Sim{f}$ is a subset of some connected component of $\Pareto{f}$.

	\begin{figure}[ht]
		\centering
		\subcaptionbox{\label{figSub:SeashellPareto}}[0.45\textwidth]{
			\centering
			\includegraphics[width=\linewidth]{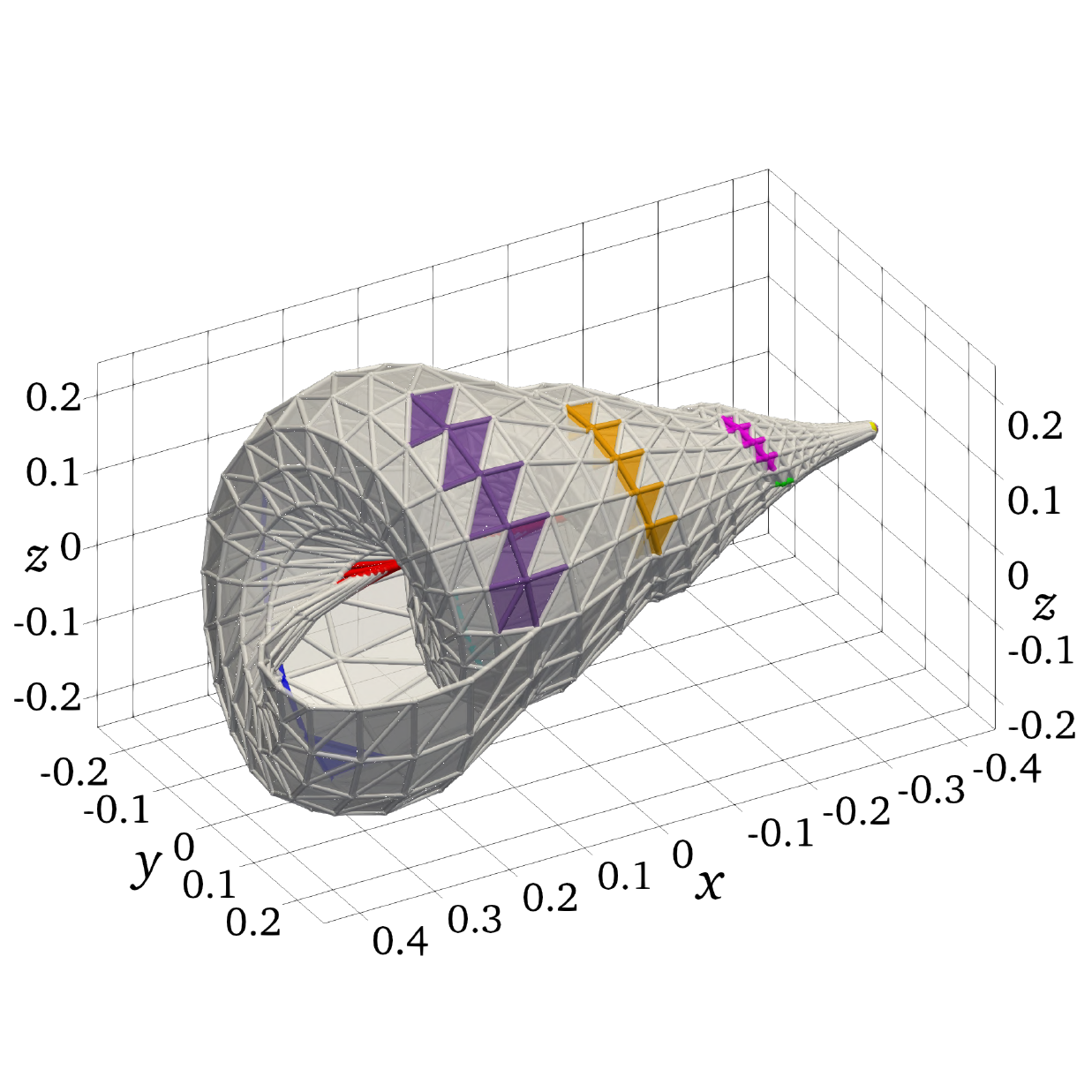}
		}
		\subcaptionbox{\label{figSub:SeashellComponents}}[0.45\textwidth]{
			\centering
			\includegraphics[width=\linewidth]{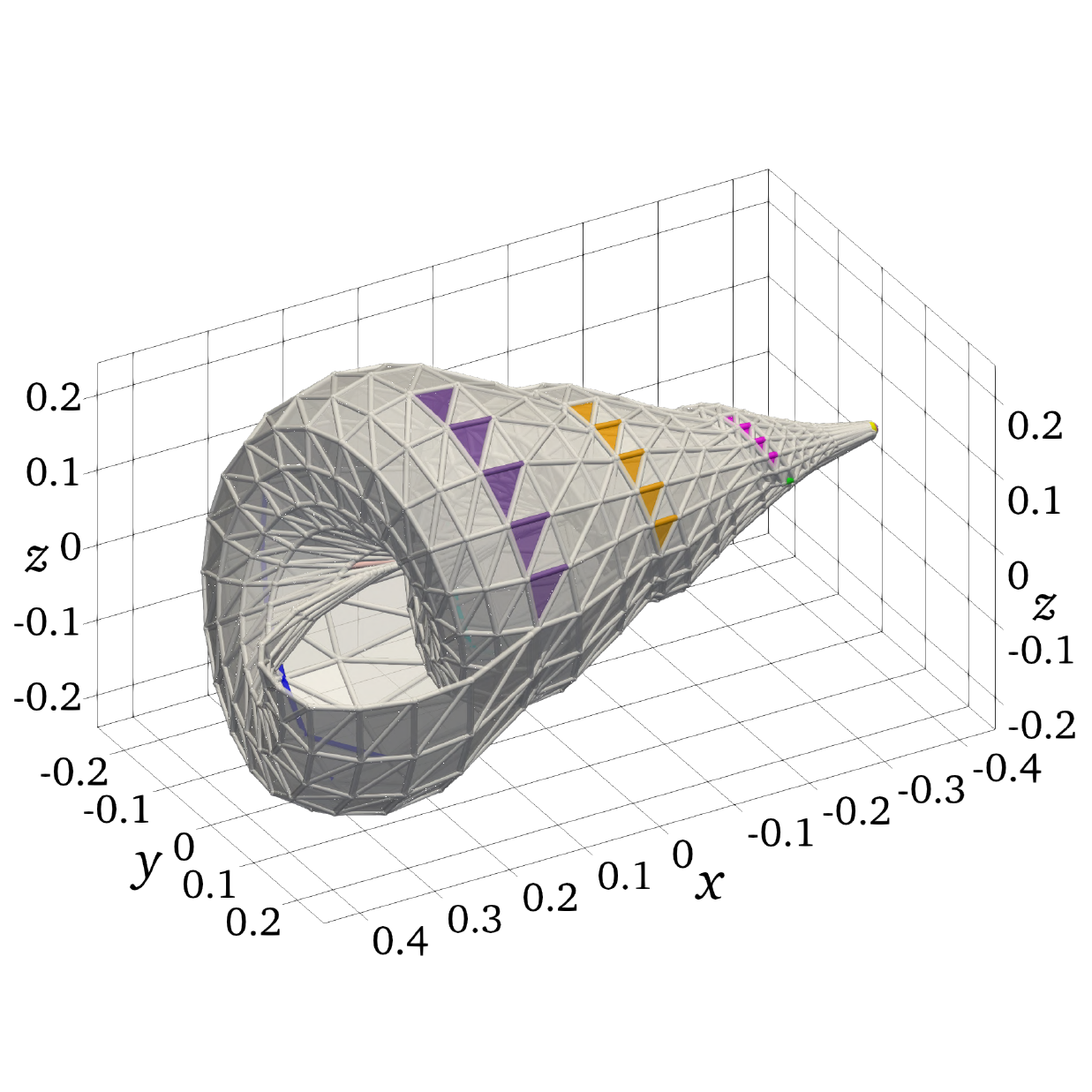}
		}
		\caption{In \subref{figSub:SeashellPareto}, the different colors show the connected components of the Pareto set $\Pareto{f}$, where $f$ is the max-extension of $v\mapsto(y,z)$ on the \texttt{seashell} dataset. In \subref{figSub:SeashellComponents}, the corresponding critical components of $g$ with respect to $\Sim{f}$, where $g$ is the \mdm function output by \GenerateMDM when given $f$ as input.}
		\label{fig:Seashell}
	\end{figure}

	In some cases, the critical components of the \mdm function $g$ with respect to $\Sim{f}$ are in correspondence with the connected components of $\Pareto{f}$. For example, we can verify that if $g$ is defined on a sphere as in Figure \ref{fig:CritRelGworksNot}, its critical components with respect to $\Sim{f}$ are the same as those with respect to $\Sim{g}'$. Also, if it is defined on a torus as in Figures \ref{fig:CritRelG'worksNotPareto} and \ref{fig:CritRelG'worksNot}, it has four components with respect to $\Sim{f}$, as desired. Moreover, if $f$ is the max-extension of $v\mapsto(y,z)$ on the \texttt{seashell} triangulation obtained from the GTS library \citep{GTS}, as in Figure \ref{fig:Seashell}, we can verify that all critical components of the generated \mdm function $g$ with respect to $\Sim{f}$ are in correspondence with the connected components of $\Pareto{f}$.

	\begin{figure}[ht]
		\centering
		\subcaptionbox{\label{figSub:HeadPareto}}[0.45\textwidth]{
			\centering
			\includegraphics[width=\linewidth]{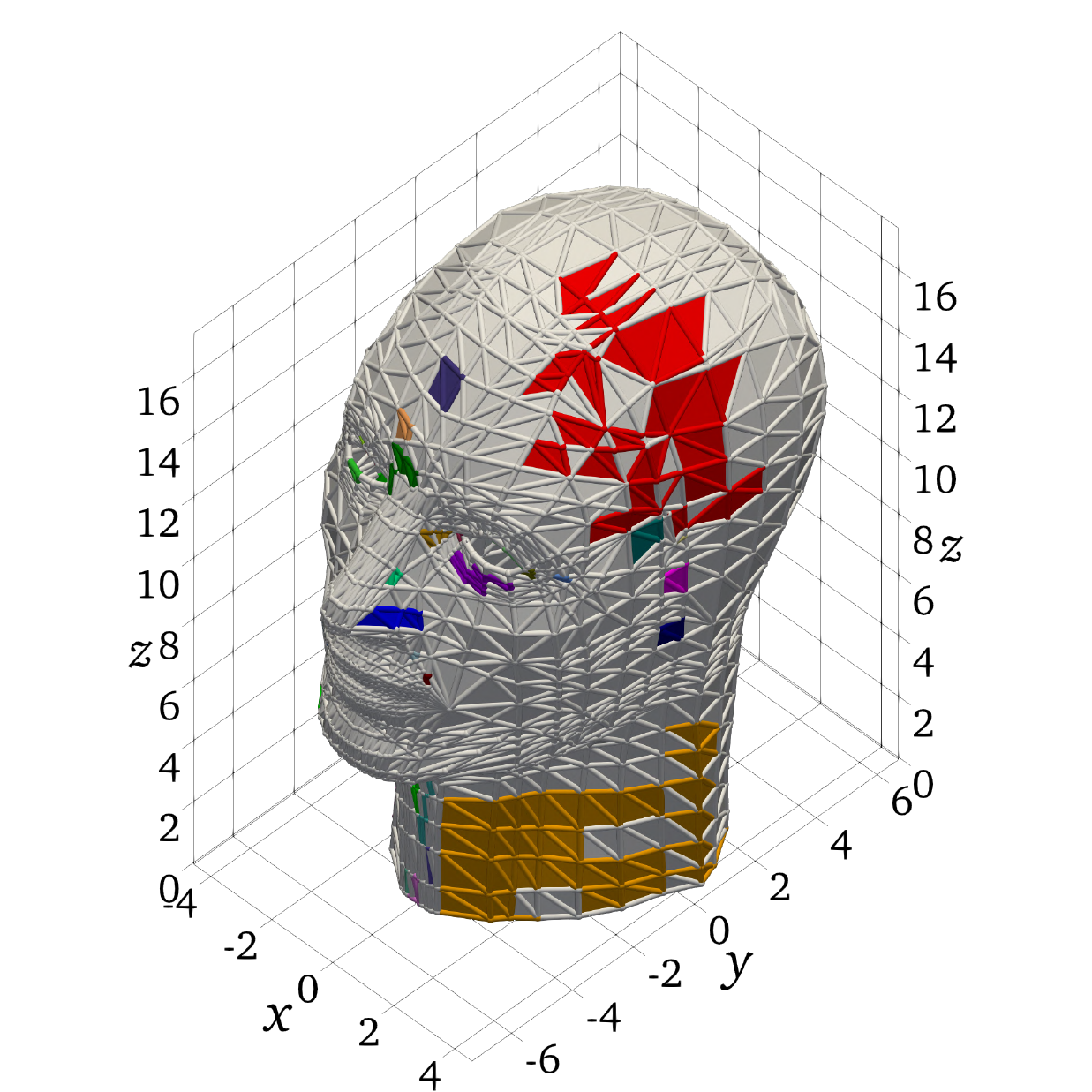}
		}
		\subcaptionbox{\label{figSub:HeadComponents}}[0.45\textwidth]{
			\centering
			\includegraphics[width=\linewidth]{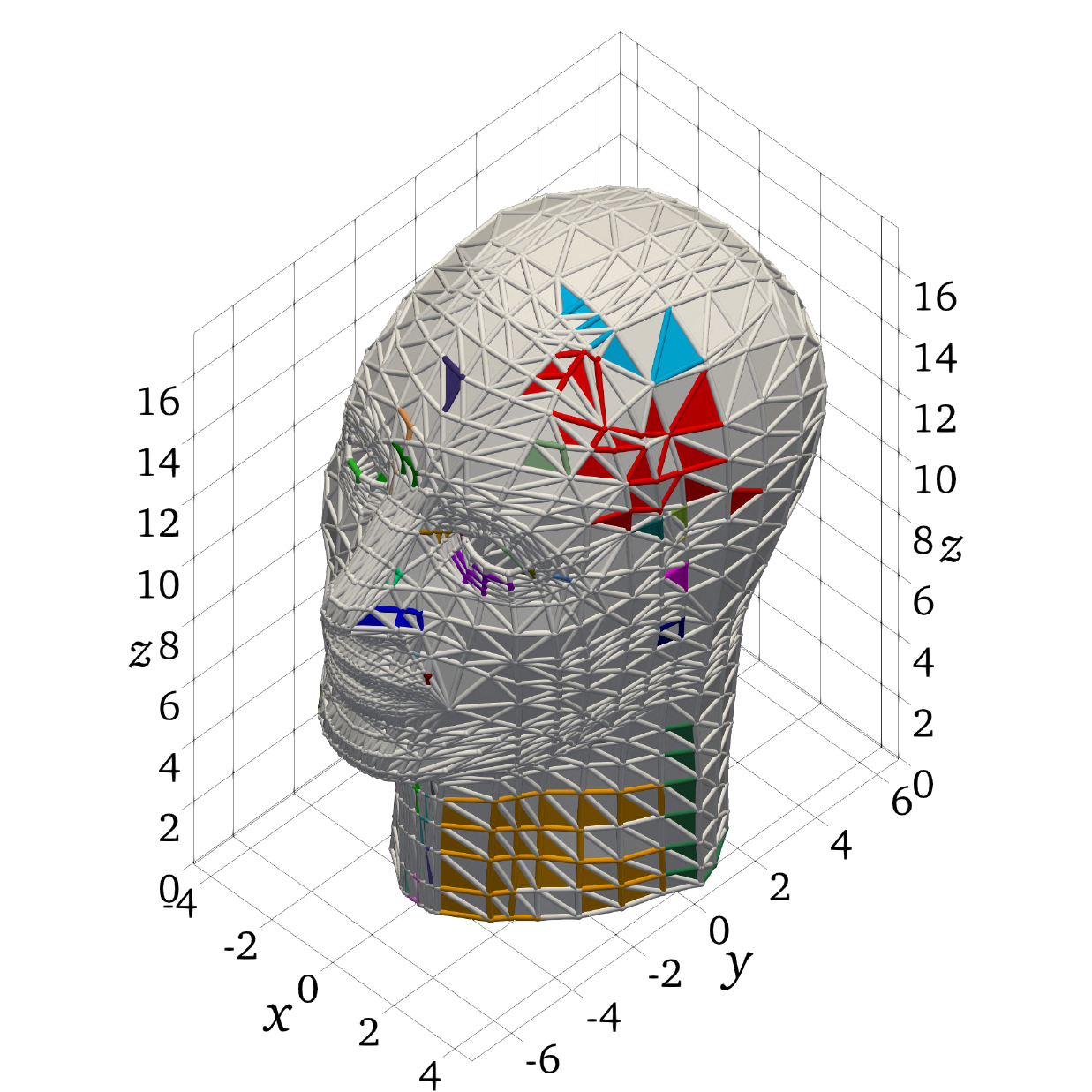}
		}
		\caption{In \subref{figSub:HeadPareto}, the different colors show the connected components of the Pareto set $\Pareto{f}$, where $f$ is the max-extension of $v\mapsto(x,z)$ on the \texttt{head} dataset from the GTS library. In \subref{figSub:HeadComponents}, the critical components of $g$ with respect to $\Sim{f}$, where $g$ is the \mdm function output by \GenerateMDM when given $f$ as input.}
		\label{fig:Head}
	\end{figure}

	Nevertheless, in many cases, some of the connected components of $\Pareto{f}$ contain multiple critical components of $g$ with respect to $\Sim{f}$. See, for instance, functions $f$ and $g$ as defined in Figure \ref{fig:Head}. When this happens, we can interpret the critical components of $g$ as follows. Consider two critical simplices $\sigma$ and $\tau$ of $g$ and let $C_\sigma$ and $C_\tau$ be their connected components in $L_{f(\sigma)}$ and $L_{f(\tau)}$ respectively. Also,we choose $\sigma$ and $\tau$ so that there exists $\sigma'\in C_\sigma$ and $\tau'\in C_\tau$ which satisfy $\sigma'\leq\tau'$, meaning that condition \ref{item:DefR_f2} of Definition \ref{def:CritComponentsF} is verified. Moreover, assuming $g$ is perfect relatively to $f$, it follows from Corollary \ref{coro:CriticalImpliesPareto} that $\sigma,\tau\in\Pareto{f}$ and, since $\sigma'\leq\tau'$,  we see that $\sigma$ and $\tau$ belong in the same connected component of $\Pareto{f}$. If $\sigma\not\Sim{f}\tau$, necessarily, condition \ref{item:DefR_f1} of Definition \ref{def:CritComponentsF} is not satisfied, so $f_i(\sigma) \neq f_i(\tau)$ for each $i=1,...,\maxdim$ and, because $f(\sigma)=f(\sigma')\preceq f(\tau')=f(\tau)$, we deduce that $f_i(\sigma) < f_i(\tau)$ for each $i$. Thus, although $\sigma$ and $\tau$ belong in the same connected component of $\Pareto{f}$, we see that $\sigma$ always appear before $\tau$ in the multifiltration induced by $f$, so they cannot possibly enter it at a same step. In other words, $\sigma$ and $\tau$ correspond to two distinct features of the multifiltration, which explains why they belong to two different critical components with respect to $\Sim{f}$.

	\section{Discussion and closing remarks}

	In this paper, we were able to extend results in \citep{Allili2019, Scaramuccia2020} and answer to some open problems therein. Notably, we presented \GenerateMDM, an algorithm which, in addition to producing a gradient field as its predecessors \Matching \citep{Allili2019} and \ComputeDiscreteGradient \citep{Scaramuccia2020}, may take as input any multifiltering function and computes a compatible \mdm function that approximates it. Furthermore, the concept of (discrete) Pareto set was defined and linked to the critical simplices produced by \GenerateMDM in order to shed light on some experimental results in \citep{Allili2019}. Moreover, we showed how the idea of critical components introduced in \citep{Brouillette2022} translates into practice and adapted their definition to the applied setting.

	Still, some questions remain open. From a theoretical perspective, as shown in smooth and continuous settings \citep{AssifPK2021, Budney2023, Cerri2019}, the Pareto set of a vector filtering function is strongly related to the homological changes in the associated multifiltration. Thus, analogous connections could seemingly be made in our discrete setting. This could be addressed algebraically, by analyzing how the simplices in the Pareto set of a discrete multifiltering function are related in the associated multiparameter persistence modules \citep{Landi2022}, or dynamically, by considering a partition into level sets as a combinatorial multivector field \citep{Lipinski2023}. Otherwise, by restraining the problem to the study of discrete vector functions induced by a component-wise injective vertex map, we could instead consider a piecewise linear approach, using the concepts defined in \citep{Edelsbrunner2008a} or \citep{Huettenberger2013} as a starting point.

	Furthermore, it would be possible to improve some computational aspects of \GenerateMDM. For instance, we could adapt the algorithm to multifiltering functions defined on a cubical complex (see \citep{Kaczynski2004}) so that it can be used to generate \mdm functions on images. Also, we may verify that almost every \mdm function $g$ obtained experimentally in Section \ref{sec:ExperimentalResults} is perfect relatively to its associated input map $f$, with only a few exceptions when $f=0$. Hence, it would be of interest to find theoretical conditions on the input of \GenerateMDM that would guarantee the optimality of the output, as it was done in \citep{Landi2022} for the algorithm \ComputeDiscreteGradient (and thus for \Matching, since it produces an equivalent output \citep{Scaramuccia2020}). Besides, for \GenerateMDM to be used to reduce the size of a simplicial complex prior to multipersistent homology computations, it should be improved in terms of speed. Indeed, using the multi-chunk algorithm as seen in \citep{Fugacci2019, Fugacci2023} or a minimal presentation of the homology of the complex \citep{Fugacci2023, Kerber2021, Lesnick2022} seems more efficient for this task.

	Nonetheless, \GenerateMDM could potentially be used in other applications related to either multipersistent homology or topological data analysis. We could, for example, adapt the concepts of Morse-Smale complex \citep{Gyulassy2014}, Morse connections graph \citep{Allili2007} or descending complex \citep{Bauer2024} to \mdm functions. Also, \mdm functions and their gradient fields could realistically be used to modelize fluid dynamics, similarly to PL vector functions \citep{Huettenberger2013}, or to extend the application of grayscale image analysis techniques \citep{DelgadoFriedrichs2015, Hu2021} to multispectral images. Finally, since multipersistent homology may be used to define topological descriptors of datasets in artificial intelligence applications \citep{Loiseaux2023}, it would be of interest to see if \mdm theory could also yield such descriptors.

	\bibliographystyle{abbrvnat}
	\bibliography{Refs.bib}

	\appendix

	\def\thetheorem {\Alph{section}.\arabic{theorem}}

	\section{Proofs of Section \ref{sec:Correctness}}\label{sec:ProofCorrectness}

	The main arguments which explain how the algorithms from Section \ref{sec:AlgoDescription} work were given in Section \ref{sec:Correctness}. Here, we present the comprehensive proofs of the correctness of those algorithms.

	First, we show that every $\sigma\in K$ is processed exactly once. From Algorithm \ref{algo:GenerateMDM}, it is easy to see that each level set $L_u$ is used exactly once as a parameter in \ExpandMDM. Thus, it suffices to show that, for a given $L_u$, each $\sigma\in L_u$ is processed exactly once by \ExpandMDM.

	The process of a given simplex $\sigma\in L_u$ is illustrated by the flowchart in Figure \ref{fig:ExpandMDMprocess}. Before going over it in details, we make the following key observation.

	\begin{lem}\label{lem:UnprocImpliesCofaceUnproc}
		Suppose $\sigma\in L_u$ is such that $\processed(\sigma) = \True$ in Algorithm \ref{algo:ExpandMDM}. Then, $\processed(\alpha) = \True$ for all faces $\alpha<\sigma$ in $L_u$, meaning that a face $\alpha$ of $\sigma$ cannot be processed after $\sigma$. Equivalently, a coface of $\sigma$ cannot be processed before $\sigma$.
	\end{lem}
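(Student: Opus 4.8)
The plan is to prove the contrapositive formulation---that no coface of $\sigma$ is processed before $\sigma$---by establishing and maintaining throughout the execution of \ExpandMDM the following invariant: \emph{at every moment, every simplex of $L_u$ that has been processed has all of its facets lying in $L_u$ already processed}. Granting the invariant, the stated conclusion follows quickly. If $\alpha<\sigma$ is any face with $\alpha,\sigma\in L_u$, I would choose a maximal chain of facets $\alpha=\gamma_0<\gamma_1<\cdots<\gamma_m=\sigma$. Admissibility of $f$ puts every intermediate simplex into $L_u$, since $u=f(\alpha)\preceq f(\gamma_j)\preceq f(\sigma)=u$ forces $f(\gamma_j)=u$; that is, $L_u$ is closed under passing to any simplex lying between two of its members. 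Applying the invariant along the chain then yields that $\sigma$ processed implies $\gamma_{m-1}$ processed, and so on down to $\alpha$.

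It remains to verify the invariant, which I would do by induction on the sequence of processing events of Algorithm \ref{algo:ExpandMDM}. Each event marks \processed as \True either for a single \emph{critical} simplex $\sigma$ popped from \PQzero, or for a \emph{pair} $(\tau,\sigma)$ in which $\tau$ is the unique unprocessed facet of the simplex $\sigma$ popped from \PQone. Marking a simplex never unprocesses anything, so the only fresh violations to rule out are those in which a \emph{newly} marked simplex has an unprocessed facet in $L_u$. In the critical case, I first observe that a simplex enters \PQzero only with $\NumUnprocFacets(\sigma,L_u)=0$ (either at initialization or when moved from \PQone), and since processing is irreversible this count is non-increasing; hence at the moment $\sigma$ is marked all of its facets in $L_u$ are already processed, so no violation arises at $\sigma$. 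In the pair case there are two simplices to check: for the cofacet $\sigma$, the condition $\NumUnprocFacets(\sigma,L_u)=1$ says every facet of $\sigma$ in $L_u$ other than $\tau$ is already processed, and $\tau$ is marked simultaneously, so $\sigma$ is fine.

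The main obstacle, and the only genuinely delicate point, is the facet $\tau$ of a pair: I must show that when $\tau$ is marked, all of its facets in $L_u$ are already processed. Here I would combine a combinatorial feature of simplices with the convexity of $L_u$ noted above. Let $\gamma<\tau$ be an arbitrary facet of $\tau$ with $\gamma\in L_u$; then $\gamma$ is a codimension-two face of $\sigma$, and in a simplex every codimension-two face lies in exactly two facets, so there is a unique facet $\tau'\neq\tau$ of $\sigma$ with $\gamma<\tau'<\sigma$. Convexity of $L_u$ gives $\tau'\in L_u$. Since $\tau$ is the unique unprocessed facet of $\sigma$ in $L_u$ at this instant, $\tau'$ is already processed; the induction hypothesis (the invariant, valid immediately before the current event) applied to $\tau'$ then shows all facets of $\tau'$ in $L_u$, in particular $\gamma$, are processed. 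As $\gamma$ was arbitrary, every facet of $\tau$ in $L_u$ is processed before $\tau$ is marked, which closes the induction. Finally, the equivalent phrasing is immediate: if a coface $\beta>\sigma$ were processed earlier, the invariant at the moment $\beta$ is marked would already force the face $\sigma$ to be processed, contradicting that $\sigma$ is still unprocessed.
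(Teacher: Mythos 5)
Your proof is correct and takes essentially the same approach as the paper's: the critical and facet-paired cases are dispatched identically, and the delicate case (marking the facet $\tau$ of a pair) rests on the same key observation that a facet $\gamma<\tau$ in $L_u$ lies in a second facet $\tau'\neq\tau$ of $\sigma$, which belongs to $L_u$ by convexity of level sets and must already be processed. The only difference is organizational --- you maintain one global invariant over all processing events and apply it directly to $\tau'$, whereas the paper runs a nested induction over the cofacet-paired simplices and case-splits on how $\tau'$ (called $\gamma$ there) was itself processed --- so your induction is marginally cleaner but the content is the same.
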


	\begin{proof}
		We prove that $\processed(\sigma) = \True$ implies $\processed(\alpha) = \True$ for a facet $\alpha^{(p-1)}<\sigma^{(p)}$. The more general result can then be deduced recursively.

		First, suppose $\sigma$ was processed as critical at line \ref{algoLine:DeclareProcessAfterCritical} of Algorithm \ref{algo:ExpandMDM}. Then, $\sigma$ was in \PQzero at some point, so all its facets have been processed and we have $\processed(\alpha)=\True$. In the case where $\sigma$ is paired with a facet $\tau<\sigma$, then $\sigma$ was popped from \PQone at line \ref{algoLine:PopFromPQone} and $\tau = \UnprocFacet(\sigma,L_u)$. For all $\alpha \neq \tau$, we have $\processed(\alpha) = \True$ because $\tau$ is the unique unprocessed facet of $\sigma$. For $\alpha = \tau$, then $\alpha$ is paired with $\sigma$ which are both declared as processed simultaneously at line \ref{algoLine:DeclareProcessAfterPairing}.

		We now show the lemma when $\sigma$ is paired with a cofacet by \ExpandMDM. To do so, we consider the set of all simplices in $L_u$ paired with a cofacet and proceed by induction on the order in which they are processed.

		Consider the first simplex $\sigma$ to be paired with a cofacet $\sigma'$, so $\sigma'$ is popped from \PQone at line \ref{algoLine:PopFromPQone} with $\NumUnprocFacets(\sigma',L_u) = 1$ and $\sigma$ is defined as $\UnprocFacet(\sigma',L_u)$ at line \ref{algoLine:UnprocessedFacet}. Consider a facet $\alpha<\sigma$ in $L_u$. We know from the structure of a simplicial complex that there exists some $\gamma\in K$ such that $\alpha<\gamma<\sigma'$ and $\gamma\neq\sigma$. Moreover, from the definition of $f$, we have that $f(\alpha) \preceq f(\gamma)\preceq f(\sigma')$, and since $f(\alpha) = f(\sigma') = u$, it follows that $f(\gamma) = u$ and $\gamma\in L_u$. Necessarily, $\processed(\gamma) = \True$ because $\sigma$ is the unique unprocessed facet of $\sigma'$ in $L_u$. Furthermore, since $\sigma$ is the first simplex in $L_u$ to be paired with a cofacet, $\gamma$ was necessarily processed as critical, meaning that $\gamma$ was in \PQzero at some point and $\gamma$ has no unprocessed facet. Thus, $\alpha<\gamma$ has to be processed.

		Now, assume $\sigma$ is the $n^\text{th}$ simplex in $L_u$ to be paired with a cofacet $\sigma'$ and suppose that the first $n-1$ such simplices had no unprocessed facet in $L_u$ when they were paired. Consider a facet $\alpha<\sigma$ in $L_u$. As above, we could show that there exists a processed facet $\gamma\in L_u$ of $\sigma'$ such that $\gamma\neq\sigma$ and $\alpha<\gamma<\sigma'$.
		\begin{itemize}
			\item If $\gamma$ was processed as critical, then it was in \PQzero at some point, meaning that all its facets, including $\alpha$, have been processed.

			\item If $\gamma$ is paired with a cofacet, then we know from the induction hypothesis that $\gamma$ has no unprocessed facet in $L_u$. Hence, $\processed(\alpha) = \True$.

			\item If $\gamma$ is paired with a facet $\tau<\gamma$, then $\gamma$ was popped from \PQone at line \ref{algoLine:PopFromPQone} and $\tau = \UnprocFacet(\gamma,L_u)$. If $\alpha \neq \tau$, then $\processed(\alpha) = \True$ since $\tau$ is the only unprocessed facet of $\gamma$. If $\alpha = \tau$, then $\alpha$ is processed along with $\gamma$ before $\sigma$.
		\end{itemize}
		In all cases, we see that $\alpha$ has to be processed before $\sigma$ is paired.
	\end{proof}

	Not only is this lemma the key to understanding why our algorithms work, it also allows us to deduce that when \ExpandMDM processes two simplices $\tau<\sigma$ as a pair, then $\tau$ is necessarily in \PQzero.

	\begin{lem}\label{lem:UnprocFacetIsInPQzero}
		Let a simplex $\sigma$ be popped from \PQone at line \ref{algoLine:PopFromPQone} and suppose it has a unique unprocessed facet $\tau\in L_u$. Necessarily, all facets of $\tau$ in $L_u$, if any, are processed and $\tau\in\PQzero$.
	\end{lem}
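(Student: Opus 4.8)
The statement has two parts: that every facet of $\tau$ lying in $L_u$ is already processed, and that $\tau$ currently sits in \PQzero. My plan is to obtain the first part from Lemma \ref{lem:UnprocImpliesCofaceUnproc} together with the local combinatorics of a simplex, and then to deduce the second part by determining which queue an unprocessed $\tau$ can occupy, using the index map $I$ to rule out \PQone.

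For the first part, I would fix a facet $\beta<\tau$ with $\beta\in L_u$, so that $\beta$ is a codimension-two face of $\sigma$. By the structure of a simplicial complex---exactly the fact used in the proof of Lemma \ref{lem:UnprocImpliesCofaceUnproc}---there is a unique facet $\gamma$ of $\sigma$ with $\beta<\gamma<\sigma$ and $\gamma\neq\tau$. Admissibility of $f$ gives $f(\beta)\preceq f(\gamma)\preceq f(\sigma)$, and since $f(\beta)=f(\sigma)=u$ this forces $f(\gamma)=u$, hence $\gamma\in L_u$. As $\tau$ is the \emph{unique} unprocessed facet of $\sigma$ in $L_u$ and $\gamma\neq\tau$, we have $\processed(\gamma)=\True$; applying Lemma \ref{lem:UnprocImpliesCofaceUnproc} to $\gamma$ and its face $\beta$ then yields $\processed(\beta)=\True$. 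This settles the first part.

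For the second part, I would first observe that an unprocessed simplex that has ever been inserted into a queue must still lie in \PQzero or \PQone: the only way to leave \PQone is by a pop, after which $\tau$ is either moved to \PQzero or processed as a pair, and the only way to leave \PQzero is by a pop that processes $\tau$ as critical---both processing outcomes contradicting $\processed(\tau)=\False$. So it remains to show that $\tau$ was inserted somewhere and is not in \PQone now. If $\tau$ has no facet in $L_u$, then at initialization it is placed directly in \PQzero. Otherwise, by the first part all facets of $\tau$ in $L_u$ are processed; since each processing event (a critical cell, or a pair of simplices of consecutive dimensions) marks as processed at most one facet of $\tau$, the unprocessed-facet count of $\tau$ decreases one step at a time and therefore passes through the value $1$, at which moment \AddCofacets inserts $\tau$ into \PQone (or $\tau$ started with exactly one facet and was placed in \PQone already at initialization). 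Thus $\tau$ has entered a queue. Finally, $\sigma$ was popped as the minimal-priority element of \PQone, and admissibility of $I$ gives $I(\tau)<I(\sigma)$; were $\tau$ still in \PQone it would have been popped before $\sigma$. Hence $\tau\notin\PQone$, and combining with the previous observation forces $\tau\in\PQzero$.

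The step I expect to be most delicate is the queue-tracking in the second part: one must verify that the unprocessed-facet counter of $\tau$ genuinely passes through $1$---rather than dropping from $\geq 2$ to $0$ in a single event---so that \AddCofacets is guaranteed to fire, and that the two pop mechanisms are the only exits from the queues. The inequality $I(\tau)<I(\sigma)$ is the decisive ingredient that separates the two queues and places $\tau$ in \PQzero.
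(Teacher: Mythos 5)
Your proof is correct and follows essentially the same route as the paper's: show that all facets of $\tau$ in $L_u$ are already processed using Lemma \ref{lem:UnprocImpliesCofaceUnproc}, argue that $\tau$ must have entered and remained in one of the two queues, and use $\tau<\sigma\Rightarrow I(\tau)<I(\sigma)$ to rule out \PQone. The only (harmless) difference is in the first part, where the paper invokes Lemma \ref{lem:UnprocImpliesCofaceUnproc} directly on $\tau$, whereas you apply it to the already-processed sibling facet $\gamma$ of $\sigma$ containing the codimension-two face $\beta$ --- a slightly more explicit but equivalent argument.
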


	\pagebreak

	\begin{proof}
		First notice that, when it is processed, $\tau$ has to be in either \PQzero or \PQone:
		\begin{itemize}
			\item If $\NumUnprocFacets(\tau,L_u) \leq 1$ when \ExpandMDM starts processing $L_u$, then $\tau$ is added to either \PQone at line \ref{algoLine:InitialAddToPQone} or \PQzero at line \ref{algoLine:InitialAddToPQzero}.
			\item Lemma \ref{lem:UnprocImpliesCofaceUnproc} tells us that all facets of $\tau$ are necessarily processed before $\tau$ is paired with $\sigma$, so even if $\NumUnprocFacets(\tau,L_u) \geq 2$ initially, we know that the algorithm eventually processes enough facets of $\tau$ so that $\NumUnprocFacets(\tau,L_u) = 2$. Then, the next facet $\alpha$ of $\tau$ to be processed by the algorithm is declared as processed at either line \ref{algoLine:DeclareProcessAfterPairing} or line \ref{algoLine:DeclareProcessAfterCritical}, and $\tau$ is added to \PQone by the \AddCofacets function at either line \ref{algoLine:AddCofacetsAfterPairing} or line \ref{algoLine:AddCofacetsAfterCritical}.
		\end{itemize}
		Also, because $\tau$ is still unprocessed when it is paired with $\sigma$, we know it is still in one of the two queues at that moment. However, it cannot be in \PQone: if it were, it would be popped from \PQone instead of $\sigma$ because $\tau<\sigma\Rightarrow I(\tau)<I(\sigma)$, meaning that $\tau$ has priority over $\sigma$ in \PQone.
	\end{proof}

	\begin{figure}[ht]
		\centering
		\begin{scaletikzpicturetowidth}{\textwidth}
			\begin{tikzpicture}[scale=\tikzscale, every node/.append style={transform shape},
				startstop/.style={rectangle, rounded corners, minimum width=3cm, minimum height=1cm,text centered, draw=black, fill=red!30},
				process/.style={rectangle, minimum width=3cm, minimum height=1cm, text centered, draw=black, fill=orange!30},
				decision/.style={diamond, minimum width=3cm, minimum height=1cm, text centered, draw=black, fill=green!30, aspect=5},
				io/.style={trapezium, trapezium left angle=70, trapezium right angle=110, minimum width=3cm, minimum height=1cm, text centered, draw=black, fill=blue!30},
				arrow/.style={thick,->,>=stealth}
				]

				\node (start) [startstop] {For $\sigma\in L_u$};

				\node (init0) [decision, below of=start, yshift=-1.5cm] {$\NumUnprocFacets(\sigma,L_u)=0$ ?};

				\node (init1) [decision, below of=init0, yshift=-1.75cm] {$\NumUnprocFacets(\sigma,L_u)=1$ ?};

				\node (add1) [process, below of=init1, xshift=-4.5cm, yshift=-1cm]{add $\sigma$ to \PQone};

				\node (add0) [process, below of=init1, xshift=4.5cm, yshift=-1cm]{add $\sigma$ to \PQzero};

				\node (wait) [process, xshift=-7cm] at ($(init0)!0.5!(init1)$) {\parbox{3cm}{\centering wait until some facet of $\sigma$ is processed}};

				\node (PQ1Priority) [decision, below of=add1, yshift=-1.5cm] {$\sigma$ has priority in \PQone ?};

				\node (processPQone) [process, xshift=-5cm] at ($(PQ1Priority)!0.5!(add1)$) {\parbox{3cm}{\centering process another simplex in \PQone}};

				\node (popPQone) [process, below of=PQ1Priority, yshift=-1.5cm] {remove $\sigma$ from \PQone};

				\node (numunproc0) [decision, below of=popPQone, yshift=-1.5cm] {$\NumUnprocFacets(\sigma,L_u) = 0$ ?};

				\node (PQoneEmpty) [decision, below of=add0, yshift=-1.5cm] {$\PQone=\emptyset$ ?};

				\node (PQ0Priority) [decision, below of=PQoneEmpty, yshift=-1.5cm] {$\sigma$ has priority in \PQzero ?};

				\node (processPQone0) [process, right of=PQoneEmpty, xshift=5cm] {\parbox{3cm}{\centering process a simplex $\sigma'$ in \PQone}};

				\node (paired) [decision, right of=add0, xshift=5cm] {$\sigma$ paired with $\sigma'$ ?};

				\node (popPQzero) [process, below of=PQ0Priority, yshift=-1.5cm] {remove $\sigma$ from \PQzero};

				\node (processPQzero) [process, right of=PQ0Priority, xshift=5cm] {\parbox{3cm}{\centering process another simplex in \PQzero}};

				\node (processed) [startstop, yshift=-4cm] at ($(popPQzero)!0.5!(popPQone)$) {$\processed(\sigma)=\True$};

				\draw [arrow] (start) -- node[anchor=west] {Line \ref{algoLine:InitialCheckZero}} (init0);

				\draw [arrow] (init0.south) node[anchor=north west] {NO} -- (init1.north) node[anchor=south west] {Line \ref{algoLine:InitialCheckOne}};

				\draw [arrow] (init0.east) node[anchor=south west] {YES} -| (add0.north) node[anchor=south west] {Line \ref{algoLine:InitialAddToPQzero}};

				\draw [arrow] (init1.north west) node[anchor=south east] {NO} |- (wait);

				\draw [arrow] (wait) |- (init1.west) node[anchor=south east] {Lines \ref{algoLine:AddCofacetsAfterPairing} or \ref{algoLine:AddCofacetsAfterCritical}};

				\draw [arrow] (init1.south) node[anchor=north west] {YES} |- (add1.east) node[anchor=south west] {Lines \ref{algoLine:InitialAddToPQone}, \ref{algoLine:AddCofacetsAfterPairing} or \ref{algoLine:AddCofacetsAfterCritical}};

				\draw [arrow] (add1) -- node[anchor=west] {Line \ref{algoLine:LoopPQoneBegins}} (PQ1Priority);

				\draw[arrow] (PQ1Priority.north west) node[anchor=south east] {NO} |- (processPQone);

				\draw[arrow] (processPQone) |- (PQ1Priority.west) node[anchor=south east] {Line \ref{algoLine:LoopPQoneBegins}};

				\draw [arrow] (PQ1Priority.south) node[anchor= north west] {YES} -- (popPQone.north) node[anchor=south west] {Line \ref{algoLine:PopFromPQone}};

				\draw [arrow] (popPQone) -- node[anchor=west] {Line \ref{algoLine:CheckZero}} (numunproc0);

				\draw [arrow] (numunproc0.south) node [anchor=north west] {NO} |- (processed.west) node[anchor=south east] {Line \ref{algoLine:DeclareProcessAfterPairing}};

				\draw [arrow] (numunproc0.east) node [anchor=south west] {YES} -- +(1,0) |- (add0.west) node[anchor=south east] {Line \ref{algoLine:MoveFromPQoneToPQzero}};

				\draw [arrow] (add0) -- node [anchor=east] {Line \ref{algoLine:LoopPQoneBegins}} (PQoneEmpty);

				\draw [arrow] (PQoneEmpty.east) node [anchor=south west] {NO} -- (processPQone0);

				\draw [arrow] (processPQone0) -- node[anchor=west] {Line \ref{algoLine:UnprocessedFacet}} (paired);

				\draw [arrow] (paired.south west) node [anchor=north east] {NO} -- +(0,-1) -| node[anchor=south west] {Line \ref{algoLine:LoopPQoneBegins}} (PQoneEmpty.north east);

				\draw [arrow] (paired.east) node [anchor = south west] {YES} -- +(0.5,0) |- (processed.east) node [anchor=south west] {Line \ref{algoLine:DeclareProcessAfterPairing}};

				\draw [arrow] (PQoneEmpty.south) node[anchor = north east] {YES} -- (PQ0Priority.north) node[anchor=south east] {Line \ref{algoLine:PQzeroNeqEmpty}};

				\draw [arrow] (PQ0Priority.south) node[anchor = north east] {YES} -- (popPQzero.north) node[anchor = south east] {Line \ref{algoLine:PopFromPQzero}};

				\draw [arrow] (PQ0Priority.east) node[anchor = south west] {NO} -- (processPQzero);

				\draw [arrow] (processPQzero) -- +(0,1) -| node[anchor=south west] {Line \ref{algoLine:LoopPQoneBegins}} (PQoneEmpty.south east);

				\draw [arrow] (popPQzero) -- +(0,-1) -| (processed.north) node[anchor=south west] {Line \ref{algoLine:DeclareProcessAfterCritical}};
			\end{tikzpicture}
		\end{scaletikzpicturetowidth}
		\caption{Flowchart of \ExpandMDM for a single simplex $\sigma$ in a given level set $L_u$.}\label{fig:ExpandMDMprocess}
	\end{figure}

	From these observations, we can summarize the process of a simplex $\sigma\in L_u$ with the flowchart in Figure \ref{fig:ExpandMDMprocess}. Although it contains a few loops, we can show $\sigma$ always ends up being processed.

	\propProcessedOnce*

	\pagebreak

	\begin{proof}
		The result follows from the three following statements:
		\begin{enumerate}
			\item\label{enum:PropProcessOnce1} Each simplex in $L_u$ eventually enters \PQone or \PQzero.
			\item\label{enum:PropProcessOnce2} Every simplex in \PQone and \PQzero is eventually processed.
			\item\label{enum:PropProcessOnce3} A processed simplex cannot enter \PQone or \PQzero again.
		\end{enumerate}

		We first prove Statement \ref{enum:PropProcessOnce3}. We see that a simplex $\sigma$ can only be added to \PQone at lines \ref{algoLine:InitialAddToPQone}, \ref{algoLine:AddCofacetsAfterPairing} and \ref{algoLine:AddCofacetsAfterCritical}, while it could be added to \PQzero at lines \ref{algoLine:InitialAddToPQzero} or \ref{algoLine:MoveFromPQoneToPQzero}.
		\begin{itemize}
			\item At lines \ref{algoLine:InitialAddToPQzero} and \ref{algoLine:InitialAddToPQone}, all simplices of $L_u$ are still unprocessed.

			\item At lines \ref{algoLine:AddCofacetsAfterPairing} and \ref{algoLine:AddCofacetsAfterCritical}, a simplex $\sigma$ can only be added to \PQone by function \AddCofacets after one of its facet is processed. However, when $\processed(\sigma)=\True$, we know from Lemma \ref{lem:UnprocImpliesCofaceUnproc} that all facets of $\sigma$ are processed, so \AddCofacets cannot add $\sigma$ to \PQone.

			\item Since a processed simplex cannot be added to \PQone, for a processed simplex $\sigma$ to be moved from \PQone to \PQzero at line \ref{algoLine:MoveFromPQoneToPQzero}, it would need to be declared as processed while inside \PQone. Yet, this cannot happen. Indeed, consider $\sigma\in\PQone$:
			\begin{itemize}
				\item if $\sigma$ has priority in \PQone, it is removed from the queue at line \ref{algoLine:PopFromPQone} before possibly being processed;

				\item if $\sigma$ does not have priority in \PQone, another simplex is popped from \PQone is either moved to \PQzero or paired with a facet $\tau\in\PQzero$ by Lemma \ref{lem:UnprocFacetIsInPQzero}, so $\sigma$ remains unprocessed.
			\end{itemize}
		\end{itemize}

		We now show Statement \ref{enum:PropProcessOnce2}, namely \ExpandMDM eventually processes all simplices in \PQone and \PQzero. We see from Algorithm \ref{algo:ExpandMDM} that every simplex in \PQone is eventually either moved to \PQzero at line \ref{algoLine:MoveFromPQoneToPQzero} or processed by being paired with a facet. Also, we know that \PQone has to become empty at some point because $L_u\subseteq K$ is finite and the simplices which are either moved to \PQzero or processed cannot enter \PQone again. Hence, Algorithm \ref{algo:ExpandMDM} eventually reaches line \ref{algoLine:PQzeroNeqEmpty}, and a simplex in \PQzero gets processed. The algorithm continues in this fashion for as long as $\PQone\neq\emptyset$ or $\PQzero\neq\emptyset$ and since
		\begin{itemize}
			\item $L_u\subset K$ is finite;
			\item simplices in \PQzero cannot enter \PQone;
			\item processed simplices cannot return to either \PQone or \PQzero;
		\end{itemize}
		it follows that Algorithm \ref{algo:ExpandMDM} eventually processes all simplices in both \PQone and \PQzero and terminates.

		Finally, we prove \ref{enum:PropProcessOnce1}, that says all simplices in $L_u$ eventually enter either queue \PQone or \PQzero. Let $p=\min_{\tau\in L_u}\dim\tau$ and consider $\sigma^{(p+r)}\in L_u$. We proceed by induction on $r$. For $r=0$, it is easy to see that $\sigma$ has no facet in $L_u$, so it enters \PQzero at line \ref{algoLine:InitialAddToPQzero}. Now, consider $r\geq 1$ and suppose \ref{enum:PropProcessOnce1} is true for $r-1$.
		\begin{itemize}
			\item Assume that, initially, $\NumUnprocFacets(\sigma,L_u)\leq 1$. Then, $\sigma$ is added to either \PQzero or \PQone at line \ref{algoLine:InitialAddToPQzero} or \ref{algoLine:InitialAddToPQone}.

			\item Assume $\NumUnprocFacets(\sigma,L_u) > 1$ at the beginning of Algorithm \ref{algo:ExpandMDM}. By the induction hypothesis, we know that all facets of $\sigma$ eventually enter either \PQone or \PQzero and, from \ref{enum:PropProcessOnce2}, are processed at some point. Hence, the algorithm eventually processes enough facets of $\sigma$ so that $\NumUnprocFacets(\sigma,L_u)$ decreases to 1, at which point $\sigma$ is added to \PQone by function \AddCofacets at either lines \ref{algoLine:AddCofacetsAfterPairing} or \ref{algoLine:AddCofacetsAfterCritical}.\qedhere
		\end{itemize}
	\end{proof}

	We now show that the dictionaries $g$ and $\cV$ produced by \GenerateMDM indeed represent a \mdm function $g:K\rightarrow\R^\maxdim$ and its gradient field $\cV:K\nrightarrow K$. To do so, a few preliminary observations are needed.

	\begin{lem}\label{lem:AllFacetsProcessedBefore}
		Consider a facet $\tau$ of a simplex $\sigma\in K$. In Algorithm \ref{algo:GenerateMDM}, $\tau$ is processed either before $\sigma$ or at the same time by being paired by \ExpandMDM.
	\end{lem}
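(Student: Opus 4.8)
The plan is to split into two cases according to whether $\tau$ and $\sigma$ lie in the same level set of $f$. Since $f$ is admissible and $\tau < \sigma$, we have $f(\tau) \preceq f(\sigma)$, so either $f(\tau) \precneqq f(\sigma)$ or $f(\tau) = f(\sigma)$.

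First I would handle the case $f(\tau) \precneqq f(\sigma)$. Here $\tau$ belongs to $L_{f(\tau)}$ and $\sigma$ to $L_{f(\sigma)}$, two distinct level sets. Recall that \GenerateMDM (Algorithm \ref{algo:GenerateMDM}) calls \ExpandMDM on the level sets in a sequential loop ordered so that $L_{u'}$ precedes $L_u$ whenever $u' \precneqq u$. By Proposition \ref{prop:ProcessedOnce}, every simplex of a level set is processed during the single call of \ExpandMDM on that set, and that call completes before the next one begins. Hence all of $L_{f(\tau)}$ is processed before any simplex of $L_{f(\sigma)}$, and in particular $\tau$ is processed strictly before $\sigma$.

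The remaining case is $f(\tau) = f(\sigma) =: u$, so that $\tau$ and $\sigma$ are processed during the same call of \ExpandMDM on $L_u$. Here I would invoke Lemma \ref{lem:UnprocImpliesCofaceUnproc}, which states precisely that a face cannot be processed after its coface within a level set: at the moment $\sigma$ is declared processed, every face of $\sigma$ in $L_u$ is already processed. Applied to the facet $\tau < \sigma$, this gives $\processed(\tau) = \True$ by the time $\sigma$ is processed, so $\tau$ is processed before $\sigma$, with the single exception identified in the proof of that lemma: when $\tau$ is the unique unprocessed facet returned by \UnprocFacet and the pair is handled together, in which case $\tau$ and $\sigma$ are declared processed simultaneously at line \ref{algoLine:DeclareProcessAfterPairing}. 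Either way, $\tau$ is processed before $\sigma$ or at the same time by pairing, as claimed.

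I expect the only delicate point to be the bookkeeping of the simultaneity: one must verify that the paired branch is the genuine exceptional case and that in every other sub-branch of \ExpandMDM the facet $\tau$ strictly precedes $\sigma$. This requires no new argument, since it is exactly the case analysis already carried out in the proof of Lemma \ref{lem:UnprocImpliesCofaceUnproc}; the present lemma reduces to combining that local (within-level-set) statement with the global level-set ordering enforced by \GenerateMDM.
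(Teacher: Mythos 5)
Your proposal is correct and follows essentially the same route as the paper: split on whether $f(\tau)\precneqq f(\sigma)$ or $f(\tau)=f(\sigma)$, use the ordering of level sets produced by \LevelSets in the first case, and invoke Lemma \ref{lem:UnprocImpliesCofaceUnproc} (with the paired branch as the simultaneous exception) in the second. Your explicit appeal to Proposition \ref{prop:ProcessedOnce} to guarantee $\tau$ is actually processed during the call on its level set is a minor extra precaution the paper leaves implicit, but the argument is otherwise identical.
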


	\begin{proof}
		Consider $u\in f(K)$ such that $\sigma\in L_u$. For a facet $\tau<\sigma$, we know that $f(\tau)\preceq f(\sigma)=u$, so either $f(\tau) = u'\precneqq u$ or $f(\tau) = u$.
		\begin{itemize}
			\item If $f(\tau) = u'\precneqq u$, $\tau$ is processed as part of $L_{u'}$ which is necessarily processed before $L_u$ by \GenerateMDM because the routine \LevelSets orders the level sets so that $L_{u'}$ comes before $L_u$ when $u'\precneqq u$.

			\item If $f(\tau) = u$, $\tau$ was processed within the same level set $L_u$ as $\sigma$, and we know from Lemma \ref{lem:UnprocImpliesCofaceUnproc} that $\tau$ cannot be processed before $\sigma$. \qedhere
		\end{itemize}
	\end{proof}

	\begin{lem}\label{lem:FirstProcessCriticalVertex}
		Let $\sigma_0$ be the first simplex to be processed by \GenerateMDM. Necessarily, $\sigma_0$ was processed as critical and is a vertex of $K$.
	\end{lem}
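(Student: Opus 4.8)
The plan is to prove the two assertions separately: first that $\sigma_0$ must be a vertex, and then that it is processed as critical rather than as a member of a pair. Both parts will hinge on Lemma \ref{lem:AllFacetsProcessedBefore}, which guarantees that every facet of a simplex is processed strictly before it or simultaneously with it in a pair, together with the defining property of $\sigma_0$ that nothing is processed before it. Throughout I will use that the pairing of two simplices $\tau<\sigma$ at line \ref{algoLine:DefineVpair} arises from popping $\sigma$ from \PQone and reading off its unique unprocessed facet $\tau=\UnprocFacet(\sigma,L_u)$, so that $\cV(\tau)=\sigma$ and $\sigma$ has exactly one unprocessed facet at that moment.

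For the first part, I would argue by contradiction, assuming $\dim\sigma_0\geq 1$, so that $\sigma_0$ has at least two facets. Pick any facet $\tau<\sigma_0$. By Lemma \ref{lem:AllFacetsProcessedBefore}, $\tau$ is processed before $\sigma_0$ or paired with $\sigma_0$; the former immediately contradicts the minimality of $\sigma_0$. For the latter, I would split into the two ways $\sigma_0$ can appear in a pair. If $\sigma_0$ is the cofacet popped from \PQone, then it has a unique unprocessed facet at that step, so a second facet (which exists since $\dim\sigma_0\geq 1$) must already have been processed, again contradicting minimality. If instead $\sigma_0$ is the facet of the pair, i.e. $\cV(\sigma_0)$ is a cofacet, then $\sigma_0\in\dom\cV\setminus\Fix\cV$ forces $\sigma_0\notin\image\cV$ by condition (3) of a combinatorial vector field; hence no facet $\tau<\sigma_0$ can satisfy $\cV(\tau)=\sigma_0$, and Lemma \ref{lem:AllFacetsProcessedBefore} leaves only the option that $\tau$ is processed before $\sigma_0$, once more a contradiction. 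This exhausts the cases and shows $\sigma_0$ is a vertex.

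For the second part, I would use that a vertex has no facets, so $\NumUnprocFacets(\sigma_0,L_u)=0$ permanently; consequently $\sigma_0$ is added to \PQzero at line \ref{algoLine:InitialAddToPQzero} and is never eligible for \PQone. The only remaining way a vertex could be processed inside a pair is as the facet $\tau$ of some edge $\sigma$ with $\cV(\sigma_0)=\sigma$; but then $\sigma$ is popped from \PQone with $\sigma_0$ as its unique unprocessed facet, forcing the other vertex of $\sigma$ to have been processed earlier, contradicting minimality. Since $\sigma_0$ is processed at all (Proposition \ref{prop:ProcessedOnce}) yet cannot be paired, it is processed as critical at line \ref{algoLine:DefineVcrit}. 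The main obstacle I anticipate is simply bookkeeping the pairing cases cleanly, in particular distinguishing whether $\sigma_0$ is the higher or the lower element of a pair and correctly invoking the matching property $\sigma_0\in\dom\cV\setminus\Fix\cV\Rightarrow\sigma_0\notin\image\cV$; beyond that, everything reduces to the minimality of $\sigma_0$ and Lemma \ref{lem:AllFacetsProcessedBefore}.
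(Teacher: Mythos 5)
Your argument is correct and is essentially the paper's own proof with the two claims established in the opposite order: both reduce to the observation that if $\sigma_0$ were processed as part of a pair, the cofacet of that pair would have a second facet already processed, contradicting the minimality of $\sigma_0$. One small repair: when you rule out $\sigma_0$ being the lower member of a pair you invoke condition (3) of a combinatorial vector field, but at this point in the development the output $\cV$ has not yet been shown to be one; the fact you need --- that $\sigma_0$ cannot both be paired upward and be the image of one of its own facets --- follows instead from Proposition \ref{prop:ProcessedOnce}, since that situation would mean $\sigma_0$ is processed twice.
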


	\begin{proof}
		Consider $u\in f(K)$ such that $\sigma_0\in L_u$. Since $\sigma_0$ is the first simplex to be processed, it belongs to the first level set $L_u$ given by the \LevelSets routine, which implies that there exists no level set $L_{u'}$ with $u'\precneqq u$. Now, suppose $\sigma_0$ is processed as either one of the simplices $\sigma$ or $\tau$ from lines \ref{algoLine:UnprocessedFacet}--\ref{algoLine:AddCofacetsAfterPairing}. From the properties of a simplicial complex, we know $\sigma$ has at least another facet $\alpha\neq\tau$ and, since $f(\alpha)\preceq f(\sigma) = u$ and there exists no $u'\precneqq u$, it follows that $\alpha\in L_u$. However, $\tau$ is the unique unprocessed facet of $\sigma$ in $L_u$, so $\processed(\alpha)=\True$, which is a contradiction since $\sigma_0\neq\alpha$ and $\sigma_0$ is the first simplex to be processed.

		Also, notice that $\sigma_0\in\PQzero$ before it gets processed, meaning that all its facets have to be processed. Since $\sigma_0$ is the first simplex in $K$ to be processed, we conclude it cannot have any facet, namely $\sigma_0$ is a vertex.
	\end{proof}

	\propComputeGworks*

	\begin{proof}
		Let $K=\{\sigma_0,\sigma_1,...,\sigma_n,...,\sigma_N\}$ where each simplex is labelled so that if $\sigma_n$ was processed before $\sigma_{n'}$ by \GenerateMDM, then $n<n'$. We prove all three statements by induction on $n$.

		For $n=0$, consider $u\in f(K)$ such that $\sigma_0\in L_u$. We know from Lemma \ref{lem:FirstProcessCriticalVertex} that $\sigma_0$ is a critical vertex, so $g(\sigma_0)$ is defined by calling $\ComputeG(f,g,\delta,\sigma_0)$ at line \ref{algoLine:DefineGcrit} of Algorithm \ref{algo:ExpandMDM}. From Algorithm \ref{algo:ComputeG}, it is obvious that $\ComputeG(f,g,\delta,\sigma_0)$ returns $f(\sigma_0)\in\R^\maxdim$, so $g(\sigma_0):= f(\sigma_0)$, which proves both \ref{enum:propComputeG1} and \ref{enum:propComputeG3}. Also, because $\sigma_0$ is a vertex, it has no facet, which makes \ref{enum:propComputeG2} trivial.

		Now, consider $n\geq 1$ and suppose that the proposition is verified for simplices $\sigma_0,\sigma_1,...,\sigma_{n-1}$. If $\sigma_n$ is a critical vertex, we prove the proposition as above in the case $n=0$. Also, when $\sigma_n$ is processed as critical but not a vertex, the proof is similar to the case when $\sigma_n$ is paired with a facet by \ExpandMDM. Thus, for the remainder of the proof, we assume $\sigma_n$ is processed as part of a pair $\tau<\sigma$. In this case, note that $g(\sigma_n)$ is defined by $\ComputeG(f,g,\delta,\sigma,\tau)$ at line \ref{algoLine:DefineGpair} of Algorithm \ref{algo:ExpandMDM} and $\sigma$ is not a vertex, so \ComputeG necessarily enters the \textbf{else} statement at line \ref{algoLine:ElseNotVertex}.
		\begin{itemize}
			\item[\ref{enum:propComputeG1}] Consider the set $A$ of facets $\alpha<\sigma$ such that $\alpha\neq\tau$. Every $\alpha\in A$ was processed before $\sigma$ by Lemma \ref{lem:AllFacetsProcessedBefore}, so $g(\alpha)$ is well defined for each $\alpha\in A$ by the induction hypothesis. Hence, the instruction at line \ref{algoLine:DefineU1} and the \textbf{if} statement at line \ref{algoLine:IfU=GAlpha} of Algorithm \ref{algo:ComputeG} are executed without error, which proves \ref{enum:propComputeG1}.

			\item[\ref{enum:propComputeG2}] To show $g(\alpha)\preceq g(\sigma_n)$ for all facets $\alpha$ of $\sigma_n$, we consider separately the cases $\sigma_n = \sigma$ and $\sigma_n=\tau$.
			\begin{itemize}
				\item We first assume $\sigma_n=\sigma$. Consider the set $A$ of facets $\alpha<\sigma$ such that $\alpha\neq\tau$ and the vector value $w=(w_1,...,w_\maxdim)\in\R^\maxdim$ as defined at line \ref{algoLine:DefineU} of Algorithm \ref{algo:ComputeG}, so $w_1 = \max\left(\{f_1(\sigma)\}\cup\{g_1(\alpha)\ |\ \alpha\in A\}\right)$ and $w_i = f_i(\sigma)$ for $i>1$. For $i=1$, it is clear that $w_1\geq g_1(\alpha)$ for all $\alpha\in A$, while for $i>1$,
				\begin{gather*}
					w_i = f_i(\sigma) \geq f_i(\alpha) = g_i(\alpha) \ \forall\alpha\in A
				\end{gather*}
				because $f(\sigma) \succeq f(\alpha)$ by definition of an admissible map and we know that $f_i(\alpha) = g_i(\alpha)$ for all $i>1$ and all $\alpha\in A$ by the induction hypothesis. Thus, $w\succeq g(\alpha)$ for all $\alpha\in A$. Since $g_1(\sigma)$ is defined either as $w_1$ or $w_1+\delta$ and $g_i(\sigma) := w_i$ for $i>1$, it follows that $g(\sigma)\succeq w \succeq g(\alpha)$ for all $\alpha\in A$. Furthermore, if $w\succneqq g(\alpha)$ for all $\alpha\in A$, then $g(\sigma):= w \succneqq g(\alpha)$ for all $\alpha\in A$, and if $w= g(\alpha)$ for some $\alpha\in A$, then $g_1(\sigma):= w_1+\delta > w_1\geq g_1(\alpha)$. In both cases, we see that $g(\sigma)\succneqq g(\alpha)$ for all $\alpha\in A$, so if $g(\sigma) = g(\alpha)$ for some facet $\alpha<\sigma$, we necessarily have $\alpha\notin A$, and thus $\alpha=\tau$.

				\item Now, assume $\sigma_n=\tau$. Consider a facet $\gamma<\tau$. Since $\tau$ is paired with $\sigma$, we know $\tau$ is not paired with $\gamma$, so we have to show that $g(\gamma)\precneqq g(\tau)$.

				Recall from line \ref{algoLine:DefineGpair} of Algorithm \ref{algo:ExpandMDM} that $g(\tau)=g(\sigma)$. Moreover, we know from the properties of a simplicial complex that there exists a simplex $\alpha\neq\tau$ such that $\gamma < \alpha< \sigma$. Using the arguments from the previous case, we can see that $g(\sigma)\succneqq g(\alpha)$ since $\alpha$ is a facet of $\sigma$ such that $\alpha\neq\tau$. Also, $\alpha$ was processed before $\sigma$ and $\tau$ from Lemma \ref{lem:AllFacetsProcessedBefore}, so we can deduce from the induction hypothesis that the facet $\gamma$ of $\alpha$ is such that $g(\alpha)\succeq g(\gamma)$. We conclude that
				\begin{gather*}
					g(\tau) = g(\sigma)\succneqq g(\alpha)\succeq g(\gamma).
				\end{gather*}
			\end{itemize}

			\item[\ref{enum:propComputeG3}] It is easy to see that $g_i(\sigma_n) = f_i(\sigma_n)$ for $i>1$. Thus, we only prove that $g_1(\sigma_n) = f_1(\sigma_n) + m\delta$ where $m$ is bounded by the number of simplices processed before $\sigma_n$. Notice that if $\sigma_n$ is paired with $\sigma_{n-1}$, then \ref{enum:propComputeG3} becomes obvious from the induction hypothesis since $g(\sigma_n)=g(\sigma_{n-1})$. Hence, assume $\sigma_n$ is not paired with $\sigma_{n-1}$: then, the number of simplices processed before $\sigma_n$ is exactly $n$.

			Recall that $g_1(\sigma_n)$ is either $w_1$ or $w_1+\delta$ where $w_1$ is defined at line \ref{algoLine:DefineU1} of Algorithm \ref{algo:ComputeG} as $w_1 = \max\left(\{f_1(\sigma)\}\cup\{g_1(\alpha)\ |\ \alpha\in A\}\right)$ and $A$ is the set of facets $\alpha<\sigma$ such that $\alpha\neq\tau$. Also, we see that $f_1(\sigma) = f_1(\tau) = f_1(\sigma_n)$ since $\sigma_n$ is either $\sigma$ or $\tau$ and they both belong in a same level set.
			\begin{itemize}
				\item Assume $w_1 = f_1(\sigma)$. Then, we have either $g_1(\sigma_n) := f_1(\sigma_n)$ or $g_1(\sigma_n) := f_1(\sigma_n) + \delta$, so $g_1(\sigma_n) = f_1(\sigma_n) + m\delta$ where $m\in\{0,1\}$. Since $n\geq 1$ by hypothesis, we have $m\leq n$.

				\item Assume $w_1 = g_1(\alpha) > f_1(\sigma)$ for some $\alpha\in A$. By the induction hypothesis, $g_1(\alpha) = f_1(\alpha) + m\delta$ where $m$ is bounded by the number of simplices processed before $\alpha$. Thus, $w_1 = f_1(\alpha) + m\delta$ and $m\leq n-1$. Also, we know that $f_1(\alpha)\leq f_1(\sigma)$ because $f$ is an admissible map.
				\begin{itemize}[label=$\bullet$]
					\item If $f_1(\alpha) = f_1(\sigma)$, since $f_1(\sigma) = f_1(\sigma_n)$ and $g_1(\sigma_n)$ equals either $w_1$ or $w_1+\delta$, we necessarily have $g_1(\sigma_n) = f_1(\sigma_n) + m\delta$ or $g_1(\sigma_n) = f_1(\sigma_n) + (m+1)\delta$ where both $m, m+1\leq n$, hence the result.

					\item We show the inequality $f_1(\alpha) < f_1(\sigma)$ leads to a contradiction. Recall from line \ref{algoLine:DefineDeltaF1NonConstant} of Algorithm \ref{algo:GenerateMDM} that $0<\delta\leq \frac{f_1(\sigma) - f_1(\alpha)}{|K|}$. Also, $m<n<|K|$, so $m\delta < |K|\delta\leq f_1(\sigma) - f_1(\alpha)$. Therefore, $g_1(\alpha) = f_1(\alpha) + m\delta < f_1(\sigma)$, which contradicts the assumption that $w_1 = g_1(\alpha) > f_1(\sigma)$.
					\qedhere
				\end{itemize}
			\end{itemize}
		\end{itemize}
	\end{proof}

	We are now ready to prove the main theorem.

	\theogMDM*

	\begin{proof}
		It is easy to see that $\cV$ is $f$-compatible since it only pairs simplices that belong in a same level set of $f$. The remainder of the proof follows from Proposition \ref{prop:ComputeGworks}. From part \ref{enum:propComputeG3} of the proposition, it is clear that
		\begin{gather*}
			\lVert g(\sigma) - f(\sigma)\rVert = |g_1(\sigma) - f_1(\sigma)| = m\delta < |K|\delta \leq \epsilon
		\end{gather*}
		since, as defined in Algorithm \ref{algo:GenerateMDM}, the parameter $\delta$ is such that $\delta\leq\frac{\epsilon}{|K|}$. Next, we show $g$ is a \mdm function and $\cV$ is its gradient field. Let $\sigma\in K$.
		\begin{itemize}
			\item Assume $\sigma$ is paired by \ExpandMDM with some facet $\tau<\sigma$. From Proposition \ref{prop:ComputeGworks}\ref{enum:propComputeG2}, we see that for all cofacets $\beta>\sigma$, since $\beta$ is not paired with $\sigma$, we have that $g(\sigma)\precneqq g(\beta)$, so $\Head{g}{\sigma} = \emptyset$. This proves conditions \ref{enum:defMDMenum1} and \ref{enum:defMDMenum3} of Definition \ref{def:MDM} of a \mdm function. Then, for all facets $\alpha\neq\tau$ of $\sigma$, we also have from Proposition \ref{prop:ComputeGworks}\ref{enum:propComputeG2} that $g(\alpha)\precneqq g(\sigma)$ while $g(\tau) = g(\sigma)$, so $\Tail{g}{\sigma} = \{\tau\}$. This proves conditions \ref{enum:defMDMenum2} and \ref{enum:defMDMenum4} of Definition \ref{def:MDM}.

			Moreover, we see at line \ref{algoLine:DefineVpair} of Algorithm \ref{algo:ExpandMDM} that $\cV(\sigma)$ is undefined. This agrees with the definition of the gradient field of $g$ which is only defined for simplices $\sigma$ with $\Tail{g}{\sigma} = \emptyset$.

			\item Now, assume $\sigma$ is paired by \ExpandMDM with some cofacet $\gamma>\sigma$. From Proposition \ref{prop:ComputeGworks}\ref{enum:propComputeG2}, we have that $g(\sigma)\precneqq g(\beta)$ for all cofacets $\beta\neq\gamma$ of $\sigma$ while $g(\gamma) = g(\sigma)$, so $\Head{g}{\sigma} = \{\gamma\}$. Thus, conditions \ref{enum:defMDMenum1} and \ref{enum:defMDMenum3} of Definition \ref{def:MDM} are satisfied. Moreover, every facet $\alpha<\sigma$ is not paired with $\sigma$ so, from Proposition \ref{prop:ComputeGworks}\ref{enum:propComputeG2}, $g(\alpha)\precneqq g(\sigma)$ and $\Tail{g}{\sigma} = \emptyset$, which verifies conditions \ref{enum:defMDMenum2} and \ref{enum:defMDMenum4} of Definition \ref{def:MDM}.

			Also, we see from line \ref{algoLine:DefineVpair} of Algorithm \ref{algo:ExpandMDM} that $\cV(\sigma):=\gamma\in \Head{g}{\sigma}$, which again agrees with the definition of the gradient field of $g$.

			\item Finally, when $\sigma$ is processed as critical, meaning that it is not paired with any facet or cofacet, we see from Proposition \ref{prop:ComputeGworks}\ref{enum:propComputeG2} that $g(\alpha)\precneqq g(\sigma)\precneqq g(\beta)$ for all facets $\alpha<\sigma$ and cofacets $\beta>\sigma$, which suffices to verify all four conditions of Definition \ref{def:MDM} of a \mdm function. Also, this means that $\Head{g}{\sigma} = \Tail{g}{\sigma} = \emptyset$, so $\sigma$ is a critical point of the \mdm function $g$. In addition, we see from line \ref{algoLine:DefineVcrit} of Algorithm \ref{algo:ExpandMDM} that $\cV(\sigma):=\sigma$, which is consistent with the definition of the gradient field of $g$. \qedhere
		\end{itemize}
	\end{proof}

	\propfMDMiffFisG*

	\begin{proof}
		If $f=g$, since $g$ is \mdm, we obviously have $f$ \mdm as well. Thus, we suppose $f$ is \mdm and show that $f(\sigma)=g(\sigma)$ for all $\sigma\in K$.

		As in the proof of Proposition \ref{prop:ComputeGworks}, we label as $\{\sigma_0,\sigma_1,...,\sigma_n,...,\sigma_N\}$ the simplices of $K$ so that $n<n'$ when $\sigma_n$ is processed before $\sigma_{n'}$ by \GenerateMDM and we prove $f(\sigma_n)=g(\sigma_n)$ by induction on $n$.

		Since $\sigma_0$ is a vertex processed as critical by Lemma \ref{lem:FirstProcessCriticalVertex}, we have that $g(\sigma_0)$ is defined as $f(\sigma_0)$, as seen at line \ref{algoLine:DefU=fSigma} of Algorithm \ref{algo:ComputeG}. For $n\geq 1$, if $\sigma_n$ is a vertex and processed as critical, we use the same argument to prove $g(\sigma_n)=f(\sigma_n)$. Otherwise, $g(\sigma_n)$ is defined by going through lines \ref{algoLine:ElseNotVertex}--\ref{algoLine:U1+=Delta} of Algorithm \ref{algo:ComputeG} either by processing $\sigma_n$ as a critical simplex or as part of a pair $\tau<\sigma$.


		\begin{itemize}
			\item If $\sigma_n$ is part of a pair $\tau<\sigma$, then for all $\alpha\in A = \{\text{facets }\alpha<\sigma\ |\ \alpha\neq\tau\}$, we know that $\alpha$ is processed before $\sigma_n$, so $g_1(\alpha) = f_1(\alpha)$ by induction hypothesis. Hence, $w_1$ is defined at line \ref{algoLine:DefineU1} as
			\begin{align*}
				w_1 := &\max\left(\{f_1(\sigma)\}\cup\{g_1(\alpha)\ |\ \alpha\in A\}\right),\\
				= &\max\left(\{f_1(\sigma)\}\cup\{f_1(\alpha)\ |\ \alpha\in A\}\right) = f_1(\sigma)
			\end{align*}
			because $f(\sigma)\succeq f(\alpha)$ by definition of an admissible map $f$. Therefore, at line \ref{algoLine:DefineU}, $w$ is defined as $f(\sigma)$.

			Moreover, since $f$ is \mdm, we know that $\card\Tail{f}{\sigma} \leq 1$ and, considering that $f(\tau)=f(\sigma)$, we deduce $\Tail{f}{\sigma}=\{\tau\}$. By definition of a \mdm function, we have that $f(\sigma)\succneqq f(\alpha)$ for all $\alpha\in A = \{\text{facets }\alpha<\sigma\ |\ \alpha\neq\tau\}$. Hence, \ComputeG does not enter the \textbf{if} statement at line \ref{algoLine:IfU=GAlpha}, so $w=f(\sigma)$ is returned to define $g(\sigma_n)$. Because $\sigma_n$ is either $\sigma$ or $\tau$ and $g(\sigma_n)=f(\sigma)=f(\tau)$, we have the result.

			\item If $\sigma_n$ is processed as a critical simplex, most of the previous reasoning still holds. More precisely, we see that for all $\alpha\in A = \{\text{facets }\alpha<\sigma_n\}$, we have $g_1(\alpha) = f_1(\alpha)$ by induction hypothesis and $w_1$ is defined as $w_1 := f_1(\sigma_n)$ meaning that, at line \ref{algoLine:DefineU}, $w:=f(\sigma_n)$.

			Furthermore, we could show that $\sigma_n$ being processed as critical means that $\sigma_n$ is a critical point of the \mdm function $f$, thus $\card\Tail{f}{\sigma_n} = 0$. Consequently, by definition of a \mdm function, we have $f(\sigma_n)\succneqq f(\alpha)$ for all facets $\alpha\in A$ and, again, \ComputeG does not enter the \textbf{if} statement at line \ref{algoLine:IfU=GAlpha}. We conclude that $w=f(\sigma_n)$ is returned to define $g(\sigma_n)$, hence the result.\qedhere
		\end{itemize}
	\end{proof}

\end{document}